\title[Monodromy of CY threefolds]{%
    On the monodromy of the moduli space of Calabi-Yau threefolds coming from
    eight planes in $\P^3$}
\author[Ralf Gerkmann]{Ralf Gerkmann}
\address{\mbox{Universit\"at Mainz, Fachbereich 8 (Mathematik), 55099 Mainz,
    Germany}}
\email{ralf.gerkmann@math.lmu.de}
\author[Mao Sheng]{Mao Sheng${}^\dagger$}
\address{East China Normal University, Dep.\ of Mathematics, \mbox{200062
Shanghai,   P.R.\ China}} \email{msheng@math.ecnu.edu.cn,\quad\quad
sheng@uni-mainz.de}
\thanks{${}^\dagger$The second named author is supported by a postdoctoral
fellowship in the East China Normal University and is also partially
supported by the Program for Changjiang Scholars and Innovative
Research Team in University.}
\author[Duco van Straten]{Duco van Straten}
\address{\mbox{Universit\"at Mainz, Fachbereich 8 (Mathematik), 55099 Mainz,
    Germany}}
\email{straten@mathematik.uni-mainz.de}
\author[Kang Zuo]{Kang Zuo}
\address{\mbox{Universit\"at Mainz, Fachbereich 8 (Mathematik), 55099 Mainz,
Germany}}  \email{zuok@uni-mainz.de}
\begin{document}
\theoremstyle{plain}
\newtheorem{thm}{Theorem}[section]
\newtheorem{theorem}[thm]{Theorem}
\newtheorem{lemma}[thm]{Lemma}
\newtheorem{corollary}[thm]{Corollary}
\newtheorem{proposition}[thm]{Proposition}
\newtheorem{addendum}[thm]{Addendum}
\newtheorem{variant}[thm]{Variant}
\theoremstyle{definition}
\newtheorem{lemma and definition}[thm]{Lemma and Definition}
\newtheorem{construction}[thm]{Construction}
\newtheorem{notations}[thm]{Notations}
\newtheorem{question}[thm]{Question}
\newtheorem{problem}[thm]{Problem}
\newtheorem{remark}[thm]{Remark}
\newtheorem{remarks}[thm]{Remarks}
\newtheorem{definition}[thm]{Definition}
\newtheorem{claim}[thm]{Claim}
\newtheorem{assumption}[thm]{Assumption}
\newtheorem{assumptions}[thm]{Assumptions}
\newtheorem{properties}[thm]{Properties}
\newtheorem{example}[thm]{Example}
\newtheorem{conjecture}[thm]{Conjecture}
\numberwithin{equation}{thm}

\newcommand{\pP}{{\mathfrak p}}
\newcommand{\sA}{{\mathcal A}}
\newcommand{\sB}{{\mathcal B}}
\newcommand{\sC}{{\mathcal C}}
\newcommand{\sD}{{\mathcal D}}
\newcommand{\sE}{{\mathcal E}}
\newcommand{\sF}{{\mathcal F}}
\newcommand{\sG}{{\mathcal G}}
\newcommand{\sH}{{\mathcal H}}
\newcommand{\sI}{{\mathcal I}}
\newcommand{\sJ}{{\mathcal J}}
\newcommand{\sK}{{\mathcal K}}
\newcommand{\sL}{{\mathcal L}}
\newcommand{\sM}{{\mathcal M}}
\newcommand{\sN}{{\mathcal N}}
\newcommand{\sO}{{\mathcal O}}
\newcommand{\sP}{{\mathcal P}}
\newcommand{\sQ}{{\mathcal Q}}
\newcommand{\sR}{{\mathcal R}}
\newcommand{\sS}{{\mathcal S}}
\newcommand{\sT}{{\mathcal T}}
\newcommand{\sU}{{\mathcal U}}
\newcommand{\sV}{{\mathcal V}}
\newcommand{\sW}{{\mathcal W}}
\newcommand{\sX}{{\mathcal X}}
\newcommand{\sY}{{\mathcal Y}}
\newcommand{\sZ}{{\mathcal Z}}
\newcommand{\A}{{\mathbb A}}
\newcommand{\B}{{\mathbb B}}
\newcommand{\C}{{\mathbb C}}
\newcommand{\D}{{\mathbb D}}
\newcommand{\E}{{\mathbb E}}
\newcommand{\F}{{\mathbb F}}
\newcommand{\G}{{\mathbb G}}
\newcommand{\HH}{{\mathbb H}}
\newcommand{\I}{{\mathbb I}}
\newcommand{\J}{{\mathbb J}}
\renewcommand{\L}{{\mathbb L}}
\newcommand{\M}{{\mathbb M}}
\newcommand{\N}{{\mathbb N}}
\renewcommand{\P}{{\mathbb P}}
\newcommand{\Q}{{\mathbb Q}}
\newcommand{\R}{{\mathbb R}}
\newcommand{\SSS}{{\mathbb S}}
\newcommand{\T}{{\mathbb T}}
\newcommand{\U}{{\mathbb U}}
\newcommand{\V}{{\mathbb V}}
\newcommand{\W}{{\mathbb W}}
\newcommand{\X}{{\mathbb X}}
\newcommand{\Y}{{\mathbb Y}}
\newcommand{\Z}{{\mathbb Z}}
\newcommand{\id}{{\rm id}}
\newcommand{\rank}{{\rm rank}}
\newcommand{\END}{{\mathbb E}{\rm nd}}
\newcommand{\End}{{\rm End}}
\newcommand{\Hom}{{\rm Hom}}
\newcommand{\Hg}{{\rm Hg}}
\newcommand{\tr}{{\rm tr}}
\newcommand{\Cor}{{\rm Cor}}
\newcommand{\GL}{\mathrm{GL}}
\newcommand{\SL}{\mathrm{SL}}
\newcommand{\Aut}{\mathrm{Aut}}
\newcommand{\Sym}{\mathrm{Sym}}
\newcommand{\ModuliCY}{\mathfrak{M}_{CY}}
\newcommand{\HyperCY}{\mathfrak{H}_{CY}}
\newcommand{\ModuliAR}{\mathfrak{M}_{AR}}
\newcommand{\Gal}{\mathrm{Gal}}

\footnotetext[1]{This work was supported by the SFB/TR 45 ¡°Periods,
Moduli Spaces and Arithmetic of Algebraic Varieties¡± of the DFG
(German Research Foundation).}

\maketitle \centerline{{\itshape To the memory of Eckart Viehweg}}

\begin{abstract}
{\scriptsize It is a fundamental problem in geometry to decide which
moduli spaces of polarized algebraic varieties are embedded by their
period maps as Zariski open subsets of locally Hermitian symmetric
domains. In the present work we prove that the moduli space of
Calabi-Yau threefolds coming from eight planes in $\P^3$ does {\em
not} have this property. We show furthermore that the monodromy
group of a good family is Zariski dense in the corresponding
symplectic group. Moreover, we study a natural sublocus which we
call hyperelliptic locus, over which the variation of Hodge
structures is naturally isomorphic to wedge product of a variation
of Hodge structures of weight one. It turns out the hyperelliptic
locus does not extend to a Shimura subvariety of type III (Siegel
space) within the moduli space. Besides general Hodge theory,
representation theory and computational commutative algebra, one of
the proofs depends on a new result on the tensor product
decomposition of complex polarized variations of Hodge structures.}
\end{abstract}

\section{Introduction}
A fundamental result of E. Viehweg \cite{V} states that for any
polarized algebraic variety the coarse moduli space $\mathfrak{M}$
exists as a quasi-projective variety. It is of great interest to
characterize those cases in which $\mathfrak{M}$ is a locally
Hermitian symmetric variety. It has been shown in \cite{VZ1},
\cite{VZ2}, \cite{MVZ} that Arakelov-type equalities lead to
sufficient conditions for this to happen. In this paper we describe
the techniques of {\em characteristic varieties} that leads to
necessary conditions that can be checked by a straightforward
calculation in concrete examples. This leads to a computational tool
that we apply to the moduli space of double octics ramified over an arrangement of eight planes in $\P^3$.\\

As over a coarse moduli space $\mathfrak{M}$ there usually does not
exist a family, we use the following weaker notion. We say that a
proper smooth map $f:\sX \to S$ over a smooth connected base $S$ is
a {\em good family} for $\mathfrak{M}$, if the moduli map $S \to
\mathfrak{M}$ of $f$ is {\em dominant and generically finite}. The
local system $\V:=(R^n f_*\Q_{\sX})_{pr}$ of primitive cohomomogies
has the structure of a weight $n$ polarized variation of $\Q$-Hodge
structures, in short $\Q$-PVHS, over $S$. Recall that this means,
among other things, that there is an Hodge filtration
${\sF}^{\bullet}$ on the vector bundle $\sV:=\V \otimes \sO_S$ with
flat connection $\nabla$ for which {\em Griffiths transversality} $
 \nabla {\sF}^p \subset {\sF}^{p-1} \otimes \Omega^1_S
$ holds. The associated graded object $(E,\theta)=(gr_F \sV,gr_F
\nabla)=(\sum_{p+q=n} E^{p,q},\oplus_{p+q=n}\theta^{p,q})$ where
$E^{p,q}:=\sF^p /\sF^{p-1}$ are the Hodge-bundles and $\theta$ is
induced by $\nabla$ is called the associated {\em Higgs-bundle}. In
general, we call a PVHS $\V$ over $S$ of {\em Calabi-Yau type}
(CY-type) if $\rank~E^{n,0} = 1$ and the morphism of vector bundles
$$
\sT_S \longrightarrow \Hom( E^{n,0},E^{n-1,1})
$$
induced by $\theta^{n,0}: E^{n,0}\to E^{n-1,1}\otimes
\Omega^{1}_{S}$ is an isomorphism at the generic point. It follows
from the Bogomolov-Tian-Todorov theorem on the unobstructedness of
the infinitesimal deformations of a Calabi-Yau manifold $X$, that
for a good family the map $\theta^{n,0}$ is naturally identified
with the Kodaira-Spencer isomorphism, so in such a situation we
obtain a PVHS of CY-type.\\

In this paper we study a particular example of an interesting
family of Calabi-Yau threefolds. An arrangement $\mathfrak{A}$ of
eight planes in general position in $\P^3$ determines
 a double cover $X$, which is a Calabi-Yau variety with singularities
along $28$ lines. A resolution $\widetilde{X}$ of such a double
octic has $\dim H^3(\widetilde{X})=20$ and carries a weight $3$
polarized Hodge structure with Hodge numbers $(1,9,9,1)$. If we vary
the arrangement $\mathfrak{A}$ in a good family, we obtain an
irreducible weight three $\Q$-PVHS $\V$ of CY-type over a smooth $9$
dimensional base $S$. We will show several theorems about $\V$.\\

\begin{theorem}\label{MainTheorem1 introduction}
$\V$ does not factor canonically.
\end{theorem}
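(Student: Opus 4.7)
The plan is to argue by contradiction. Suppose $\V$ admits a non-trivial canonical factorization as a complex PVHS. By the new tensor-product decomposition theorem for complex PVHS mentioned in the abstract, any such factorization is obtained by applying a Schur functor (tensor product, wedge, symmetric power) to complex PVHS of strictly lower rank, and the Hodge filtrations of the factors are forced by that of $\V$. Combined with $\rank E^{3,0}=1$ and total rank $20$, the list of admissible factorizations is finite, so one can proceed by a case analysis.

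First I would enumerate the possibilities. Since each tensor factor must have a one-dimensional top Hodge piece, the main candidates are $\V\cong\wedge^{3}\sU$ with $\sU$ a weight-$1$ PVHS of Hodge numbers $(3,3)$ (the hyperelliptic pattern alluded to in the abstract and treated later in the paper), together with tensor products $\V\cong \sU\otimes \sW$ in which $\sU$ has weight $1$ and $\sW$ weight $2$ with compatible ranks, and a handful of further Schur-functor variants. The tensor-product decomposition theorem cuts this list down to a small finite collection of explicit model PVHS.

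Second, for each candidate I would translate the factorization into a strong algebraic constraint on the Higgs field. By Bogomolov--Tian--Todorov the component $\theta^{3,0}$ is identified with the Kodaira--Spencer map, so $\sT_S\to \Hom(E^{3,0},E^{2,1})$ is an isomorphism. A factorization $\V\cong\wedge^{3}\sU$ would force this isomorphism together with all the iterated Higgs couplings $\theta^k\colon E^{3,0}\to E^{3-k,k}\otimes \Sym^{k}\Omega^{1}_{S}$ (the Yukawa tensors) to be assembled from a single weight-$1$ Higgs field on $\sU$; equivalently, the image of $\sT_{S}^{\otimes k}$ under the Higgs iterates must factor through a small Segre- or wedge-type subvariety of $\Hom(U^{1,0},U^{0,1})^{\otimes k}$. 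Each tensor-product case yields an analogous algebraic constraint on the characteristic variety attached to $\theta$.

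The main obstacle is then the explicit verification. One must check, on the $9$-dimensional configuration space $S$ of arrangements of eight planes in $\P^{3}$, that the Higgs field of the double octic family violates every constraint derived in the previous step. I would choose a convenient resolution $\widetilde{X}\to X$, write out the primitive weight-$3$ Hodge bundles and the induced Higgs field in local coordinates on $S$, and then apply the characteristic-variety techniques announced in the introduction, together with Gr\"obner-basis computations, to show that the generic image of $\sT_{S}^{\otimes k}$ is not contained in any of the distinguished Segre- or wedge-type subvarieties singled out above. The difficulty is not conceptual but combinatorial: one must control a rank-$20$ Higgs bundle and its iterated couplings along nine independent moduli directions, which is exactly where the computational commutative algebra component of the paper becomes indispensable.
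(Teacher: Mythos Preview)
Your overall architecture---argue by contradiction, enumerate the possible canonical factorizations, translate each into a constraint on the characteristic subvariety of the Higgs field, then rule them out by an explicit Jacobian-ring/Gr\"obner computation on the double-octic family---is exactly the paper's strategy. Two points, however, need correction.

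First, you invoke the tensor-product decomposition theorem (Theorem~\ref{MainTheorem2 introduction}) to perform the enumeration. That theorem does not say what you claim: it asserts that if the Zariski closure of monodromy splits as $G_1\times\cdots\times G_k$ then each tensor factor $\V_i$ carries a $\C$-PVHS; it says nothing about Schur functors or about the notion of ``factoring canonically'' in Definition~\ref{geometric pseudo-modularity}. In the paper it plays no role in the proof of Theorem~\ref{MainTheorem1 introduction}; it is used only later, for the classification Theorem~\ref{MainTheorem5 introduction} feeding into Zariski density. The enumeration here is instead carried out directly in Proposition~\ref{ModularTypeA}: by definition a canonical factorization goes through a Hermitian symmetric domain $D_0$ via Gross's canonical representation, the weight of the canonical PVHS equals the rank of $D_0$, so $\mathrm{rank}\,D_0=3$; combined with $\dim D_0\ge 9$ (local Torelli) and Gross's list, this leaves only $(D^{I}_{3,3},\wedge^3)$ and $(D^{I}_{1,1}\times D^{IV}_8,\mathrm{id}\otimes\mathrm{id})$. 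Your candidate list happens to coincide with this, but the derivation you propose does not follow from the theorem you cite.

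Second, the computational step in the paper is sharper and simpler than what you sketch. One does not need to analyse several iterated Higgs couplings case by case: it suffices to compute the \emph{first} characteristic subvariety $(\sC_1)_s\subset\P(\sT_{S,s})$ via the Jacobian-ring description of $H^3$ of the Kummer cover $Y$, and to verify (Proposition~\ref{Char1Result generic}) that it is \emph{empty} at a generic point. Since the two surviving candidates would force $(\sC_1)_s$ to be, respectively, a Segre $\P^2\times\P^2\subset\P^8$ or the union of a point and a smooth quadric in a hyperplane (Propositions~\ref{irreducible case}, \ref{reducible case}), emptiness is an immediate contradiction. A single Gr\"obner computation at one general arrangement suffices.
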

By this we mean the following. Associated to a Hermitian symmetric
domain $D_0=G_0/K_0$ there is a special PVHS $\W$ on $D_0$ coming
from the representation $\rho_{can}: G_0\to GL(W)$, which is called
the canonical PVHS by B. Gross. We say that $\V$ {\em factors
canonically} if the period map of $\V$ factors through the one
determined by certain canonical PVHS $(D_0,\rho_{can})$.
For a precise definition see \S3.\\

A way to exclude this happening consists of picking an appropriate
point $s \in S$, compute an appropriate {\em characteristic
subvariety} of $\V$ in $\P(\sT_{S,s})$ and compare it with the
corresponding object for $\W$. If these varieties are not isomorphic
we are done.\\

Our example is a member of a well-known infinite series of
Calabi-Yau $n$-folds coming from double covers of generic
arrangements of $2n+2$ hyperplanes in $\P^n$. For $n=1$ one has the
classical theory of four points in $\P^1$ and the associated
elliptic curves and their modulus. The paper \cite{MSY} was devoted
to $n=2$ case. Here we have K3-surfaces that are double covers of
six lines in the plane. In \cite{MSY} it was shown among other
things that in this case there is a natural good family whose
associated weight 2 PVHS factors canonically. It was asked by I.
Dolgachev (see \cite{Bo}) if the $n\geq 3$ cases are canonical with
respect to $(D^{I}_{n,n},\rho_{can})$, where $\rho_{can}:
SU(n,n)\stackrel{\wedge^n}{\longrightarrow} Sp({2n \choose n},\R)$
is the indicated representation of real Lie groups. Motivated by
this question, it has been checked in \cite{S} that the primitive
Hodge numbers of CY $n$-folds are exactly the same as predicted by
Dolgachev for all $n\in \N$.\\

In the pioneering work \cite{SYY} a result similar to Theorem
\ref{MainTheorem1 introduction} for more general moduli spaces of
configurations was given, but the methods used there are completely
different from ours. Our method is based on classical Hodge theory
(see for example \cite{Griffiths}) and can be applied to many other
concrete moduli spaces, for example the moduli spaces of the
Calabi-Yau varieties in toric varieties. Moreover, we hope to extend
the present work to the $n\geq 4$ cases.\\

Using Theorem \ref{MainTheorem1 introduction} we will prove the

\begin{theorem}\label{MainTheorem3 introduction}
Let $s \in S$ be a base point and let
$$
\tau:\pi_1(S,s) \to Sp(20,\Q)
$$
be the monodromy representation associated to $\V$. Then the image
of $\tau$ is Zariski dense in $Sp(20,\R)$.
\end{theorem}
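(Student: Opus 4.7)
I would argue by contraposition. Let $G$ denote the connected component of the Zariski closure of $\tau(\pi_1(S,s))$ in $Sp(20,\R)$. If $G=Sp(20,\R)$ we are done, so assume $G\subsetneq Sp(20,\R)$, and I will deduce that $\V$ factors canonically, contradicting Theorem~\ref{MainTheorem1 introduction}.

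Since the $\Q$-PVHS $\V$ is irreducible (as recorded in the introduction), Deligne's theorem on the semisimplicity of monodromy of a polarized VHS, combined with the observation that the symplectic polarization plus Schur's lemma forces the center of $G$ to act by $\pm 1$, shows that $G$ is a connected semisimple real algebraic subgroup of $Sp(20,\R)$ acting irreducibly on the $20$-dimensional fiber; at a very general $s\in S$ the Hodge decomposition of type $(1,9,9,1)$ is $G$-equivariant via the Hodge cocharacter.

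The heart of the argument is a classification: up to conjugacy, the only proper connected semisimple subgroup $G\subsetneq Sp(20,\R)$ admitting an irreducible $20$-dimensional real symplectic representation equipped with a CY-type Hodge structure of numbers $(1,9,9,1)$ is the image of $SU(3,3)$ under the third exterior power $\wedge^{3}$. Indeed, $h^{3,0}=1$ together with $G$-equivariance of the Hodge filtration forces the irreducible $G_{\C}$-representation on $\C^{20}$ to have a one-dimensional highest-weight line for the Hodge cocharacter, a minuscule-type condition. A direct inspection of the simple real Lie algebras and their low-dimensional symplectic irreducible representations leaves only $\mathfrak{su}(3,3)$ acting on $\wedge^{3}\C^{6}$: one has $\dim\wedge^{3}\C^{6}=\binom{6}{3}=20$, the pairing $\wedge^{3}\C^{6}\otimes\wedge^{3}\C^{6}\to\wedge^{6}\C^{6}\cong\C$ is alternating because $3$ is odd, and the weight-one Hodge structure of type $(3,3)$ on $\C^{6}$ induces on $\wedge^{3}\C^{6}$ precisely the numbers $(1,9,9,1)$.

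In this remaining case the period map of $\V$ factors through the Hermitian symmetric domain $D^{I}_{3,3}=SU(3,3)/S(U(3)\times U(3))$ along the canonical representation $\rho_{can}=\wedge^{3}$ singled out by Dolgachev's question; that is, $\V$ factors canonically. This contradicts Theorem~\ref{MainTheorem1 introduction}, and we conclude $G=Sp(20,\R)$. The principal obstacle is the classification step: one must systematically eliminate every other real semisimple subgroup of $Sp(20,\R)$ admitting an irreducible $20$-dimensional symplectic representation with Hodge numbers $(1,9,9,1)$. In practice this is a representation-theoretic bookkeeping exercise --- enumerating minuscule and quasi-minuscule representations of low-rank simple real Lie algebras and tracking their induced Hodge numbers --- which is feasible and short only because the CY condition $h^{3,0}=1$ pins down the highest-weight line so tightly.
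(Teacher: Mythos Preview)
Your overall strategy coincides with the paper's: classify the possible connected real Zariski closures $G^0\subset Sp(20,\R)$, show that each proper one forces $\V$ to factor canonically, and then invoke Theorem~\ref{MainTheorem1 introduction}. The gap is in your classification step. You restrict attention to \emph{simple} real Lie algebras and conclude that $\mathfrak{su}(3,3)$ acting by $\wedge^3$ is the only possibility. But $G^0$ is only known to be semisimple, and the product case is not vacuous: the group $SU(1,1)\times SO_0(2,8)$ acts irreducibly and symplectically on $\C^2\otimes\C^{10}\cong\C^{20}$ (alternating $\otimes$ symmetric is alternating), and the tensor product of a weight-one structure with Hodge numbers $(1,1)$ and a weight-two structure with Hodge numbers $(1,8,1)$ has exactly the Hodge numbers $(1,9,9,1)$. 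This is case~(A) of the paper's Theorem~\ref{MainTheorem5 introduction}, and your ``minuscule-type'' heuristic does not detect it. The paper also records a case~(C) with $G^0=Sp(6,\R)$, which your inspection would likewise have to address.

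Handling the product case is not a triviality one can wave away: to know that a tensor decomposition $\V\cong\V_1\otimes\V_2$ of the underlying local system is compatible with the $\C$-PVHS structure---so that one can read off Hodge numbers of the factors and identify the resulting Hermitian symmetric domain---the paper proves and uses Theorem~\ref{MainTheorem2 introduction}, a genuinely new result relying on harmonic-bundle theory and Simpson's $\C^\times$-action. Once that is in place, case~(A) does lead to a canonical factorization through $(D^{I}_{1,1}\times D^{IV}_8,\;\mathrm{id}\otimes\mathrm{id})$ (see Proposition~\ref{ModularTypeA}), and Theorem~\ref{MainTheorem1 introduction} still yields the desired contradiction; but your argument as written omits both the case itself and the tool needed to analyze it.
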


Using results of C. Schoen and P. Deligne the above theorem implies:

\begin{corollary}
The special Mumford-Tate group of a general member in
$\mathfrak{M}_{CY}$ is $Sp(20,\Q)$.
\end{corollary}

However, there exists an interesting subvariety of
$\mathfrak{M}_{CY}$ where the special Mumford-Tate groups are proper
subgroups of $Sp(20,\Q)$. Generalizing a construction from
\cite{MSY}, we define a five-dimensional subvariety
$\mathfrak{H}_{CY} \subset \mathfrak{M}_{CY}$ that we call the {\em
hyperelliptic locus}. Over it, the Hodge structure is isomorphic to
$\wedge^3$ of a $H^1(C)$, where $C$ is a hyperelliptic curve of
genus three. It is natural to ask if this decomposition can be
extended to a larger variety $\mathfrak{H}$ that contains
$\mathfrak{H}_{CY}$. Using a calculation of characteristic
subvarieties we arrive at a negative answer.

\begin{theorem}\label{MainTheorem4 introduction}
Let $\mathfrak{H}_{CY}$ be the hyperelliptic locus of
$\mathfrak{M}_{CY}$ and $\mathfrak{H}$ be any subvariety of
$\mathfrak{M}_{CY}$ which strictly contains $\mathfrak{H}_{CY}$. Let
$f: \sX\to S$ be a good family for $\mathfrak{M}_{CY}$ whose moduli
map $S\to \mathfrak{M}_{CY}$ is dominant over $\mathfrak{H}$. Then
the restriction of $\V$ to the inverse image of $\mathfrak{H}$ does
not factor through $(D^{III}_{3},\wedge^3)$.
\end{theorem}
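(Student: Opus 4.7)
The plan is to argue by contradiction, adapting the characteristic subvariety technique used for Theorem~\ref{MainTheorem1 introduction} to a base point lying above the hyperelliptic locus.

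Suppose $\V|_{f^{-1}(\mathfrak{H})}$ factors through $(D^{III}_{3},\wedge^3)$, and pick $t\in f^{-1}(\mathfrak{H}_{CY})\subset S$, so that $\V_t\cong\wedge^3 H^1(C_t,\Q)$ for a genus-$3$ hyperelliptic curve $C_t$, with $H^1(C_t,\C)=H^{1,0}\oplus H^{0,1}$ and $\dim H^{1,0}=\dim H^{0,1}=3$. The factorization gives, at $t$, a linear map $\phi:T_t\mathfrak{H}\to\Sym^2 H^{0,1}$ through which both the Higgs field $\theta^{3,0}_t$ and the iterated Yukawa cubic of $\V$ factor via the canonical variation on $D^{III}_{3}$. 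In particular the cubic form on $T_t\mathfrak{H}$ coming from the triple Higgs field of $\V$ must equal the $\phi$-pullback of the cubic $\xi\mapsto\det\hat\xi$ on $\Sym^2 H^{0,1}$, where $\hat\xi:H^{1,0}\to H^{0,1}$ is the symmetric homomorphism attached to $\xi$.

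By construction $\V|_{\mathfrak{H}_{CY}}$ is $\wedge^3$ of the weight-one variation over the hyperelliptic locus $\sH_3\subset A_3$, so $\phi|_{T_t\mathfrak{H}_{CY}}$ is the classical Kodaira--Spencer map at $[C_t]$, with image the specific $5$-dimensional subspace $T_{[C_t]}\sH_3\subset\Sym^2 H^{0,1}$. Since the Kodaira--Spencer map of $\V$ is injective at a generic $t$ by the CY-type property, $\phi$ is injective, and because $\mathfrak{H}\supsetneq\mathfrak{H}_{CY}$ its image must have dimension strictly greater than $5$; as $\dim\Sym^2 H^{0,1}=6$, $\phi$ is forced to be a linear isomorphism onto $\Sym^2 H^{0,1}$. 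Consequently the characteristic subvariety of $\V$ at $t$, cut down by $\P(T_t\mathfrak{H})\cong\P^5$ inside $\P(T_tS)\cong\P^8$, has to be projectively equivalent to the Siegel characteristic cubic hypersurface $\{\det\hat\xi=0\}\subset\P(\Sym^2 H^{0,1})\cong\P^5$.

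The decisive step, and the main obstacle, is the explicit determination of the characteristic subvariety of $\V$ at $t$. As in the proof of Theorem~\ref{MainTheorem1 introduction}, one works with the log de Rham description of $H^3$ of the double octic, and, after selecting a suitably symmetric hyperelliptic arrangement of eight planes, computes the triple Higgs field as an explicit cubic form on $T_tS\cong\C^9$. A direct inspection then rules out the four-parameter family of potential $6$-dimensional extensions of $T_t\mathfrak{H}_{CY}$ inside $T_tS$: for no such extension is the restriction of the Yukawa cubic of $\V$ projectively equivalent to the symmetric-determinantal cubic in $\P^5$. This contradiction completes the argument.
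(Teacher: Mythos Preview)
Your overall strategy---derive a contradiction at a point over the hyperelliptic locus by comparing the characteristic subvariety of $\V$ with that of the canonical PVHS on $D^{III}_3$---matches the paper's. But there are two substantive differences, and the second one is a genuine gap.

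\textbf{Different invariant.} You work with the Yukawa cubic, i.e.\ the second characteristic subvariety $\sC_2$, and aim to show that its restriction to a $\P^5\supset\P(T_t\mathfrak{H}_{CY})$ is never projectively equivalent to the symmetric determinantal cubic $\{\det\hat\xi=0\}$. The paper instead uses the \emph{first} characteristic subvariety $\sC_1$. For the canonical PVHS on $D^{III}_3$ one has $(\sC_1)_t\cong\P^2\hookrightarrow\P^5$ (the Veronese surface), which is two-dimensional. The paper's explicit computation at the specific Vandermonde point $s_0$ (Proposition~\ref{Char1Result}) shows that $(\sC_1)_{s_0}^{red}$ is a union of two irreducible surfaces each spanning a $\P^7\subset\P^8$. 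Since neither component lies in any $\P^6$, the intersection $(\sC_1)_{s_0}\cap\P(T_{s_0}\widetilde{\mathfrak H})$ with \emph{any} $\P^5$ is proper in each component and hence has dimension $\le 1$. This immediately contradicts the required Veronese surface, with no need to run over a family of candidate $6$-planes or test projective equivalence of cubics. So the paper's choice of $\sC_1$ turns the problem into a pure dimension count.

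\textbf{The gap.} Your ``decisive step'' is only asserted: you claim that a direct inspection of the Yukawa cubic at a suitably symmetric point rules out every $6$-dimensional extension, but you do not perform (or even sketch) this computation. Checking projective equivalence of cubic fourfolds in $\P^5$ over a $\P^3$-family of linear sections (the extensions of a fixed $5$-plane to a $6$-plane in $\C^9$ form $\P^3$, not a four-parameter family) is considerably more delicate than a dimension count, and nothing in your outline explains how it would be done. By contrast, the paper's single computation of $(\sC_1)_{s_0}$ disposes of all extensions at once.

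Two smaller points. First, you invoke injectivity of the Kodaira--Spencer map ``at a generic $t$'' and then use it at a specific hyperelliptic point; the paper avoids this by first treating the explicit family $f_0$ over $\mathfrak{M}_{AR}$, whose moduli map is \'etale (Corollary~\ref{etale of moduli spaces}), so local Torelli gives injectivity everywhere. Second, the paper then passes from $f_0$ to an arbitrary good family via the maximality of the special Mumford--Tate group (Corollary~\ref{Maximality of special MT group}); your outline does not address this reduction.
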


As a corollary we have the following
\begin{corollary}\label{Maximality of special MT group}
The special Mumford-Tate group of the Calabi-Yau threefolds in
$\mathfrak{H}_{CY}$ is a subgroup of $Sp(6,\Q)$. Furthermore
$\mathfrak{H}_{CY}$ is maximal with this property. That is, for any
irreducible subvariety $\mathfrak H$ of $\mathfrak{M}_{CY}$ which
strictly contains $\mathfrak{H}_{CY}$, the special Mumford-Tate
group of a general closed point in $\mathfrak H$ is not contained in
$Sp(6,\Q)$.
\end{corollary}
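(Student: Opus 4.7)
The plan is to derive both assertions from Theorem~\ref{MainTheorem4 introduction} together with the basic functoriality of Mumford--Tate groups under tensor constructions.

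For the containment I would start from the very construction of $\mathfrak{H}_{CY}$: by definition, over it the variation $\V$ is isomorphic as a $\Q$--PVHS to $\wedge^{3}\V'$, where $\V'$ is a weight one polarized variation with generic fibre $H^{1}(C,\Q)$ for a hyperelliptic curve $C$ of genus three. Since the Mumford--Tate group of $\wedge^{3} H^{1}(C,\Q)$ is the image of the Mumford--Tate group of $H^{1}(C,\Q)$ under the exterior cube representation, and since the latter is contained in $Sp(H^{1}(C,\Q))=Sp(6,\Q)$, the first claim is immediate.

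For the maximality assertion I would argue by contradiction. Let $\mathfrak{H}\supsetneq\mathfrak{H}_{CY}$ be an irreducible subvariety, fix a good family $f:\sX\to S$ with moduli map dominant over $\mathfrak{H}$, and let $T\subset S$ be the smooth part of the preimage of $\mathfrak{H}$. Assume the special MT group at a general point of $\mathfrak{H}$ lies in $Sp(6,\Q)$ (acting via $\wedge^{3}$), so that the generic MT group of $\V|_{T}$ sits inside the subgroup $\wedge^{3}Sp_{6}\subset Sp_{20}$. The Andr\'e--Deligne theorem then forces the connected algebraic monodromy group of $\V|_{T}$ to be a normal subgroup of the derived generic MT group, so it too sits in $\wedge^{3}Sp_{6}$. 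Applying Deligne's theorem of the fixed part to a suitable tensor construction (for instance to $\End(\V|_{T})$ or to an auxiliary $\Hom$--local system comparing $\V|_{T}$ with the wedge cube of an abstract rank six reduction of structure) I expect to lift the pointwise decomposition $\V_{t_{0}}\cong\wedge^{3} W$ to a global isomorphism $\V|_{T}\cong\wedge^{3}\V''$ for a weight one $\Q$--PVHS $\V''$ of rank six on $T$. This would mean that the period map of $\V|_{T}$ factors through $(D^{III}_{3},\wedge^{3})$, contradicting Theorem~\ref{MainTheorem4 introduction}.

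The main obstacle will not be the final contradiction but the intermediate step of producing the weight one variation $\V''$ from the pointwise Mumford--Tate hypothesis. The algebraic monodromy result provides the correct flat reduction of structure group, but to recover $\V''$ as a polarized weight one $\Q$--PVHS realising this reduction one must combine the theorem of the fixed part with polarisability and the complete reducibility of the exterior cube representation of $Sp_{6}$, and then check that the Hodge filtration on $\V|_{T}$ comes from a rational Hodge filtration on $\V''$ in a way compatible with the wedge decomposition. Once this global factorisation is established, Theorem~\ref{MainTheorem4 introduction} delivers the desired contradiction.
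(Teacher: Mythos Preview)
Your strategy is the one the paper itself uses, but you should be aware of a circularity hidden in the paper's logical structure. In the proof of Theorem~\ref{MainTheorem4} the authors first establish the non-factorisation statement \emph{only} for the explicit good family $f_0$ over $\mathfrak{M}_{AR}$ (via the characteristic-subvariety computation at the special point $s_0\in\widetilde{\mathfrak{H}}_{CY}$), then assert the Corollary ``as a consequence'' of this first part, and finally \emph{use} the Corollary to deduce Theorem~\ref{MainTheorem4} for arbitrary good families. So when you write ``fix a good family $f$'' and then invoke Theorem~\ref{MainTheorem4 introduction}, you must take $f=f_0$; otherwise you are appealing to the very statement whose proof depends on the Corollary you are proving.

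As for the step you correctly flag as the obstacle---producing the weight-one $\Q$-PVHS $\V''$ with $\V|_T\cong\wedge^3\V''$---your plan via the theorem of the fixed part will work, but there is a cleaner packaging. The hypothesis says the generic special Mumford--Tate group lies in a fixed $\Q$-subgroup $H\cong Sp_6\subset Sp_{20}$ acting through $\wedge^3$. This means the Hodge cocharacter $h:\mathbb{S}\to Sp(V_\R)$ at every point factors through $H_\R$, so the period of $\V$ at each point already sits in the $Sp(6,\R)$-orbit inside $D$, which is precisely the image of $D^{III}_3$ under $\wedge^3$. Since by Andr\'e--Deligne the connected algebraic monodromy is contained in $H$ (pass to a finite \'etale cover for the full monodromy if needed), the period map of $\V|_T$ literally lands in this sub-domain and factors as $T\to\Gamma_0\backslash D^{III}_3\to\Gamma\backslash D$. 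This bypasses the need to construct $\V''$ by hand and to verify Griffiths transversality for it separately; the factorisation through $(D^{III}_3,\wedge^3)$ is then exactly what contradicts the first part of the proof of Theorem~\ref{MainTheorem4}.
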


The proof of Theorem \ref{MainTheorem3 introduction} relies on
new results of Hodge-theoretical nature. There is Theorem
\ref{MainTheorem2 introduction} on the tensor product
decomposition of $\C$-PVHS, parallel to the direct sum decomposition
of $\C$-PVHS due to P. Deligne (cf. \cite{D1}). Let $\bar S$ denote
a projective manifold, $Z$ a simple divisor with normal crossing and
$S = \bar S \setminus Z$. Let $\V$ denote an irreducible $\C$-PVHS
over $S$ with quasi-unipotent local monodromy around each component
of $Z$. Fix a base point $s\in S$ let
$$
\rho : \pi_1(S,s) \longrightarrow \GL(\V_s)
$$
denote the representation of the fundamental group associated to the
underlying local system of $\C$-vector spaces, where $\V_s$ denotes
the fibre of $\V$ over $s$. Let $G$ be the Zariski closure of the
image of $\rho$ inside $\GL(\V_s)$. Assume that $G$ decomposes into
a direct product $G_1 \times \cdots \times G_k$ of simple Lie
groups. Then according to Schur's lemma, we obtain a decomposition
of local systems $\V \cong \V_1 \otimes \cdots \otimes \V_k$.

\begin{theorem}\label{MainTheorem2 introduction}
Each local system $\V_i$ admits the structure of a $\C$-PVHS such
that the induced $\C$-PVHS on the tensor product $\V_1 \otimes\cdots
\otimes  \V_k$ coincides with the given $\C$-PVHS on $\V$.
\end{theorem}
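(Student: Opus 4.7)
The plan is to use the nonabelian Hodge correspondence of Simpson, extended by Mochizuki to the quasi-projective tame case, which identifies semisimple $\C$-local systems on $S$ with quasi-unipotent monodromy around $Z$ with tame pluri-harmonic bundles on $\bar S$ in a way that is compatible with tensor products. Under this correspondence, the $\C$-PVHS structures on a local system are exactly the $\C^*$-fixed points of the Hitchin action $(E,\theta) \mapsto (E, t\theta)$ on the associated Higgs bundle. The overall strategy is to transport the decomposition $\V \cong \V_1 \otimes \cdots \otimes \V_k$ to the Higgs side, use the uniqueness of harmonic metrics on each irreducible factor, and then prove that each tensor factor is individually $\C^*$-invariant.

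First I would apply the correspondence to $\V$, producing a system of Hodge bundles $(E,\theta)$ which is $\C^*$-fixed. By functoriality, the local system decomposition lifts to a decomposition of Higgs bundles $(E,\theta) \cong \bigotimes_{i=1}^k (E_i,\theta_i)$, with $\theta = \sum_i 1\otimes\cdots\otimes\theta_i\otimes\cdots\otimes 1$. Each $\V_i$ is irreducible since its monodromy image is Zariski dense in the simple Lie group $G_i$, so by the Simpson--Mochizuki uniqueness theorem each $\V_i$ carries a tame pluri-harmonic metric unique up to a positive real scalar, and the harmonic metric on $\V$ is the tensor product of these.

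The main step is to show each $(E_i,\theta_i)$ is itself $\C^*$-fixed. Since $\theta \mapsto t\theta$ rescales each $\theta_i$ by the same $t$, the $\C^*$-invariance of $(E,\theta)$ yields an isomorphism $\bigotimes_i (E_i,\theta_i) \cong \bigotimes_i (E_i,t\theta_i)$. By uniqueness of the tensor factorization of an irreducible representation into factors with prescribed simple Zariski-closures $G_i$ (a consequence of Schur's lemma applied to $G_1\times\cdots\times G_k$), this forces isomorphisms $(E_i,\theta_i) \cong (E_i,t\theta_i) \otimes \L_{i,t}$ for flat line objects $\L_{i,t}$ subject to $\bigotimes_i \L_{i,t} \cong \sO_S$. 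After absorbing the $\L_{i,t}$ by fixing trivializations of the determinants of the factors, each $(E_i,\theta_i)$ becomes genuinely $\C^*$-fixed, so each $\V_i$ inherits a $\C$-PVHS structure; a final matching of weights and Tate twists ensures that the tensor product reproduces the original Hodge filtration on $\V$.

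The hardest point will be the control of this scalar/line-bundle ambiguity: a priori the tensor factorization of an irreducible representation is unique only up to simultaneous character twists of the factors with trivial product, and one must show that these twists can be absorbed to give genuine (rather than merely projective) $\C^*$-fixed points on each factor. The simplicity of each $G_i$ restricts the available characters, and the essentially unique pluri-harmonic metric on each irreducible $\V_i$, together with a determinant normalization, should resolve the remaining ambiguity and complete the argument.
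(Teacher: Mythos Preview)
Your overall framework---nonabelian Hodge correspondence, Simpson--Mochizuki tame harmonic bundles, and identification of $\C$-PVHS with $\C^\times$-fixed points---is exactly the one the paper uses. The divergence, and the gap, is in the step where you pass from ``$\bigotimes_i(E_i,\theta_i)\cong\bigotimes_i(E_i,t\theta_i)$'' to ``$(E_i,\theta_i)\cong(E_i,t\theta_i)\otimes\sL_{i,t}$''. You invoke uniqueness of the tensor factorization \emph{with prescribed Zariski closures $G_i$}, but you have not shown that the local systems $\V_i^{(t)}$ corresponding to $(E_i,t\theta_i)$ under the correspondence again have monodromy Zariski-dense in $G_i$. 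The Hitchin action moves you in moduli, and there is no a priori reason the Zariski closure of the monodromy is preserved along the $\C^\times$-orbit; without this, Schur's lemma for $G_1\times\cdots\times G_k$ does not bite, and the two tensor factorizations of $\V$ need not be aligned factor-by-factor. The line-bundle absorption you flag as the hard point is actually downstream of this more basic obstruction.

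The paper sidesteps the problem by never comparing two tensor factorizations. Instead it finds a Schur functor $\SSS_\mu$ so that $\SSS_\mu(\V_1)$ occurs as an irreducible \emph{direct summand} of $\bigwedge^n\V$ (using $\Sym^n\V_1\subset\bigwedge^n(\V_1\otimes\V')$); Deligne's Proposition~1.13 in \cite{D1} then equips every irreducible direct summand of a $\C$-PVHS with a $\C$-PVHS structure, so $\SSS_\mu(\V_1)$ is $\C^\times$-fixed. Because $G_1$ is simple, $\SSS_\mu$ induces an injective morphism $G_1\hookrightarrow\GL(\SSS_\mu(V_1))$ and hence, by Simpson, a \emph{finite} map of moduli spaces $\phi_\mu$; a connected $\C^\times$-orbit mapping to a fixed point under a finite map must itself be a point, so $(E_1,\theta_1)$ is $\C^\times$-fixed. (In the quasi-projective case this is carried out after restriction to sufficiently ample curves, another point your sketch elides.) Then $\V'=\V_2\otimes\cdots\otimes\V_k$ inherits a PVHS as a direct summand of $\V_1^*\otimes\V$, and induction plus Deligne's uniqueness finishes. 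The key idea you are missing is this reduction to \emph{direct summands} via a plethysm, where Deligne's theorem already does the work, combined with the finiteness of the induced map on representation varieties.
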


The proof of the theorem is independent of the other results in this
paper. We expect the result to be useful in other situations. In
this article it helps to prove the following classification result.

\begin{theorem}\label{MainTheorem5 introduction}
Let $S$ be a smooth quasi-projective algebraic variety and $\V$ be
an weight 3 $\Z$-PVHS over $S$ which is irreducible over $\C$ and of
quasi-unipotent local monodromies. If the Hodge numbers of $\V$ are
$(1,9,9,1)$, then after a possible finite \'{e}tale base change the
connected component of the real Zariski closure of the monodromy
group of $\V$ is one of the following:
\begin{itemize}
  \item [(A)] $SU(1,1)\times SO_{0}(2,8)$,
  \item [(B)] $SU(3,3)$,
  \item [(C)] $Sp(6,\R)$,
  \item [(D)] $Sp(20,\R)$.
\end{itemize}
\end{theorem}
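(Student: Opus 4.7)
The plan is to combine Theorem \ref{MainTheorem2 introduction} with a numerical case analysis constrained by the Hodge type $(1,9,9,1)$. Let $G \subset \GL(\V_s \otimes \R)$ be the real Zariski closure of the monodromy representation, with identity component $G^0$. By Deligne's semisimplicity theorem for polarized variations of Hodge structure, $G^0$ is reductive; the $\C$-irreducibility of $\V$ forces the centre of $G^0$ to act by scalars, so the classification reduces to that of the semisimple part $G^{ss}$. After a finite \'etale base change I may assume $G^0$ is connected as an algebraic group, and decompose $G^{ss}_\C = G_1 \times \cdots \times G_k$ into $\C$-simple factors. Theorem \ref{MainTheorem2 introduction} then furnishes a compatible tensor decomposition $\V \cong \V_1 \otimes \cdots \otimes \V_k$ of $\C$-PVHS in which each $\V_i$ is irreducible for $G_i$.

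Write $d_i = \dim \V_i$ and let $w_i$ denote the Hodge width of $\V_i$ (the span of $p$-indices in its Hodge decomposition). Then $\prod_i d_i = 20$, $\sum_i w_i = 3$, and since $h^{3,0}(\V) = h^{0,3}(\V) = 1$ the two extremal Hodge multiplicities of every $\V_i$ equal one. The positive partitions of $3$ are $(1,1,1), (2,1), (3)$. In the partition $(1,1,1)$ each $\V_i$ has two Hodge slots of multiplicity one, so $d_i = 2$ and $\prod d_i = 8 \neq 20$, impossible. In the partition $(2,1)$ the width-$1$ factor is $2$-dimensional of type $(1,1)$ (giving an $SL(2,\R)$-action) and the width-$2$ factor is $10$-dimensional of type $(1,8,1)$, a polarized K3-type Hodge structure whose Hodge group is $SO_0(2,8)$ by the classical theory of polarized weight-$2$ Hodge structures with $h^{2,0}=1$; the tensor product reproduces $(1,9,9,1)$, yielding case (A). In the partition $(3)$, $k=1$ and $G^{ss}$ is simple; three compatible models exist: $Sp(20,\R)$ in its standard representation with Hodge cocharacter producing weights $(3,2,1,0)$ of multiplicities $(1,9,9,1)$ (case (D)); $SU(3,3)$ acting on $\wedge^3\C^6$ with the Hodge cocharacter induced from the signature-$(3,3)$ splitting $\C^6 = \C^3 \oplus \C^3$ (case (B)); and $Sp(6,\R)$ acting via $\wedge^3$ of its standard $6$-dimensional representation (case (C), the borderline case realized on the hyperelliptic locus).

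The main obstacle is the exhaustion in the simple case: one must verify that no other simple real Lie group admits an irreducible $20$-dimensional symplectic representation equipped with a Hodge cocharacter of type $(1,9,9,1)$. The Hodge cocharacter $\mu \colon \G_m \to G_\C$ has adjoint weights on $\mathfrak g$ lying in $\{-3,-2,-1,0,1,2,3\}$, which forces $G$ to be of (or close to) Hermitian type and sharply constrains its Dynkin diagram. Combined with the numerical requirement $\dim V = 20$ and the symplectic-invariant condition, a case-by-case run through the Satake classification eliminates the exceptional groups $E_6$, $E_7$, the $SU(p,q)$ with $p+q \neq 6$, the $SO^*(2n)$ series, the $SO(2,n)$ with $n \neq 8$, and all $Sp(2n,\R)$ with $n \notin \{3,10\}$, leaving precisely the three groups listed above.
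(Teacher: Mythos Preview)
Your overall architecture matches the paper's: reduce to the semisimple part, decompose into simple factors, invoke Theorem~\ref{MainTheorem2 introduction} to put a $\C$-PVHS on each factor, and then run a case analysis on the Hodge widths. But two steps are not justified, and the paper has to do real work at exactly those points.

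\textbf{The two-factor case.} You assert that the width-$2$ factor of type $(1,8,1)$ has monodromy group $SO_0(2,8)$. Theorem~\ref{MainTheorem2 introduction} only gives you a $\C$-PVHS on the factors; the paper invokes Simpson's Lemma~5.5 to upgrade this to an $\R$-PVHS, which is what makes the polarization form of signature $(2,8)$ available. Even then, you only know $G_2 \subseteq SO(2,8)$ is simple and acts irreducibly on a $10$-dimensional space; you have not excluded that $G_2$ is a real form of $SO(5,\C)\cong Sp(4,\C)$ acting via its $10$-dimensional representation $\Gamma_{02}$. The paper first enumerates all semisimple complex pairs $(\mathfrak g_\C,\chi)$ with a $20$-dimensional symplectic irrep (eleven cases), and then eliminates the $\mathfrak{so}(5)$ possibilities by computing that the real forms $SO(5)$, $SO(1,4)$, $SO(2,3)$ land in $SO(10)$, $SO(6,4)$, $SO(4,6)$ respectively under this representation---none of which is $SO(2,8)$.

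\textbf{The simple case.} Your ``case-by-case run through the Satake classification'' is not a proof, and the list you give is partly garbled: for instance ``the $SO(2,n)$ with $n\neq 8$'' is irrelevant here since no $SO(2,n)$ has an irreducible $20$-dimensional representation, and you omit the candidates $(\mathfrak{sl}(2),\Gamma_{19})$ and $(\mathfrak{so}(5),\Gamma_{03})$ which the paper explicitly rules out. More seriously, your claim that bounded adjoint weights force $G$ to be ``of (or close to) Hermitian type'' is false: $SO(1,4)$ is of Hodge type in Simpson's sense but is not Hermitian, and the paper needs a separate direct computation to show it does not map into $Sp(20,\R)$ under $\Gamma_{03}$. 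The paper's route---enumerate the complex pairs, restrict to real forms of Hodge type via Simpson, then use the Hodge numbers $(1,9,9,1)$ to kill everything but $SU(3,3)$, $Sp(6,\R)$, $Sp(20,\R)$, with $SO(1,4)$ handled separately---is what actually closes the argument.
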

Using this theorem and Theorem \ref{MainTheorem1 introduction},
\ref{MainTheorem4 introduction} to exclude cases (A), (B) and (C)
one easily obtains Theorem \ref{MainTheorem3 introduction}.

\section{Two Calabi-Yau Threefolds}

Consider an arrangement $\mathfrak{A} = (H_1,...,H_8)$ of eight planes
in $\P^3$.  Such an arrangement can be given by a matrix $A \in M(8\times 4,\C)$, the $i$-th row corresponding to the defining equation
$$
\sum_{j=1}^4 a_{ij} x_j \quad = \quad 0
$$
of the hyperplane $H_i$. We say that $\mathfrak{A}$ is {\em in
general position} if no four of the planes intersect in a point. In
terms of the matrix $A$ this means that each $(4 \times 4)$-minor is
non-zero. We now describe two closely related Calabi-Yau threefolds
associated to such an arrangement $\mathfrak{A}$ in general position.\\

\subsection{The double octic}

The planes of the arrangement $\mathfrak{A}$ determine a divisor $R
=\sum_{i=1}^8 H_i$ on $\P^3$. As the degree of $R$ is even and the
Picard group of $\P^3$ has no torsion, there exists a {\em unique}
double cover $\pi : X \rightarrow \P^3$ that ramifies over $R$. The
singular locus of such a double octic $X$ is precisely the preimage
of the singular locus of $R$. Its irreducible components are given
by the lines $H_{ij} = H_i \cap H_j$ for $1 \leq i < j \leq 8$. We
fix an ordering of the index set $I=\{(i,j) \in \N^2\;\;|\;\;1 \le i
< j \le 8\}$ and let $\phi : \widetilde{\P}^{3} \rightarrow \P^3$
denote the composition of blow-ups whose centers are the strict
transforms of $H_{ij}$, taken in the chosen order. The fibre product
$\widetilde{X} := X \times_{\P^3} \widetilde{\P}^3$ sits in a
commutative diagram
$$
\begin{array}{rcl}
\widetilde{X} & \stackrel{\psi}{\longrightarrow} & X \\
{\scriptstyle \widetilde{\pi}} \downarrow & & \downarrow {\scriptstyle \pi} \\
\widetilde{\P}^3 & \stackrel{\phi}{\longrightarrow} & \P^3
\end{array}
$$
In case we start with an arrangement in {\em general position}, the
variety $\widetilde{X}$ thus obtained is a smooth Calabi-Yau
threefold. Note however that a different ordering of $I$ yields a
different birational minimal model of the singular variety $X$.

\begin{lemma}\label{infinitesimal deformation of CY}
The space of infinitesimal deformations of $\widetilde X$ is
naturally isomorphic to the space of infinitesimal deformations of
$\mathfrak{A}$.
\end{lemma}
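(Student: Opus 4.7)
The plan is to construct a natural map from the deformations of $\mathfrak{A}$ to the deformations of $\widetilde{X}$, to show both spaces have dimension $9$, and to verify the map is injective, hence an isomorphism. For the dimension of deformations of $\mathfrak{A}$: ordered $8$-tuples of hyperplanes in $\P^3$ form an open subset of $((\P^3)^{\vee})^8$ of dimension $24$, and since $\mathrm{PGL}(4)$ acts freely on arrangements in general position, the moduli space has dimension $24-15=9$ at $[\mathfrak{A}]$. For the dimension of deformations of $\widetilde{X}$: since $\widetilde{X}$ is a smooth Calabi-Yau threefold, $T_{\widetilde{X}}\cong\Omega^2_{\widetilde{X}}$, and hence $H^1(\widetilde{X},T_{\widetilde{X}})\cong H^{2,1}(\widetilde{X})$ is $9$-dimensional by the given Hodge numbers $(1,9,9,1)$; by Bogomolov--Tian--Todorov the Kuranishi space is smooth of this dimension.

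To construct the natural map, let $\sM$ be the moduli space of arrangements of eight planes in general position in $\P^3$ modulo $\mathrm{PGL}(4)$, and let $\sX\to\sM$ denote the universal family of resolved double octics, built by performing the double cover and the ordered sequence of blow-ups of the $28$ intersection lines compatibly over $\sM$. The Kodaira--Spencer map of this family furnishes a linear map
$$
\kappa \,:\, T_{[\mathfrak{A}]}\sM \longrightarrow H^1(\widetilde{X},T_{\widetilde{X}}),
$$
which is the desired natural map. Since source and target both have dimension $9$, it remains to prove that $\kappa$ is injective, and this is the heart of the matter.

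For injectivity, suppose $v\in\ker\kappa$, so that the first-order family in direction $v$ over $\mathrm{Spec}\,\C[\epsilon]$ is isomorphic to $\widetilde{X}\times\mathrm{Spec}\,\C[\epsilon]$. The covering involution of $\widetilde{X}$ lifts uniquely through any infinitesimal deformation (finite-order automorphisms are rigid); taking its quotient produces an infinitesimal deformation of $\widetilde{\P}^3$. Using $H^1(\P^3,T_{\P^3})=0$ and the observation that an iterated blow-up of $\P^3$ along the $28$ lines $H_{ij}$ has no deformations beyond moving the centers, one sees that this quotient deformation is trivial in a way that identifies the deformed ramification divisor $\widetilde{R}$ with a perturbation of the original one. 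Pushing down to $\P^3$, the arrangement $\mathfrak{A}$ then moves only by a $\mathrm{PGL}(4)$-action, showing $v=0$ in $\sM$.

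The principal obstacle is the last step --- precisely controlling how the $\Z/2$-covering structure and the chosen resolution of singularities behave in families. A more efficient route is to work equivariantly: deformations of $\widetilde{X}$ compatible with the covering involution correspond bijectively to deformations of the pair $(\widetilde{\P}^3,\widetilde{R})$, and by the rigidity of $\widetilde{\P}^3$ these in turn correspond to deformations of $\mathfrak{A}$ modulo $\mathrm{PGL}(4)$. Showing that every infinitesimal deformation of $\widetilde{X}$ is automatically equivariant --- for instance via the averaging trick applied to the covering involution acting on $H^1(\widetilde{X},T_{\widetilde{X}})$, together with the vanishing of its anti-invariant part --- then completes the proof.
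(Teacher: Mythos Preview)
Your approach is genuinely different from the paper's, and it contains a real gap. You reduce everything to a dimension count plus injectivity of the Kodaira--Spencer map $\kappa$, and in your final paragraph you correctly identify that the clean way to get injectivity (and in fact bijectivity) is to show that every infinitesimal deformation of $\widetilde X$ is $\Z/2$-equivariant, i.e.\ that the anti-invariant part of $H^1(\widetilde X,T_{\widetilde X})$ under the covering involution vanishes. But you never prove this vanishing --- you simply assert it as the step that ``completes the proof.'' That is precisely the nontrivial content of the lemma, and your dimension count cannot supply it: knowing $\dim H^1(T_{\widetilde X})=9$ and $\dim T_{[\mathfrak A]}\sM=9$ tells you nothing about how $H^1(T_{\widetilde X})$ splits into invariant and anti-invariant parts until you know $\kappa$ is injective, which is what you are trying to show. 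Your earlier attempt at injectivity (lifting the involution, passing to the quotient, invoking rigidity of $\widetilde{\P}^3$) is too loose to be convincing: even for the trivial family you must argue that the built-in double-cover involution agrees with the obvious one after trivialization, and the claim that blow-ups of $\P^3$ in the $28$ lines have ``no deformations beyond moving the centers'' is itself a cohomological computation you have not carried out.

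The paper's proof bypasses all of this by computing the eigenspace decomposition directly. Using the Cynk--van Straten description of deformations of double covers, one has
\[
H^1(\widetilde X,T_{\widetilde X})\;\cong\; H^1\bigl(\widetilde{\P}^3,T_{\widetilde{\P}^3}(-\log\widetilde R)\bigr)\;\oplus\; H^1\bigl(\widetilde{\P}^3,T_{\widetilde{\P}^3}\otimes\tilde\sL^{-1}\bigr),
\]
the first summand being the invariant part (equisingular deformations of $R$, hence deformations of $\mathfrak A$ since $R$ is normal crossing) and the second the anti-invariant (``transverse'') part. The latter is then shown to vanish by an explicit formula reducing it to $h^1(\P^3,T_{\P^3}(-4))+\sum_{(i,j)} h^0(\P^1,K_{\P^1})$, both of which are zero by Bott vanishing. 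This is exactly the computation your argument is missing; without it, your proof is incomplete.
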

\begin{proof}
This follows from the description of infinitesimal deformation space
of a double covers obtained in  \cite{CVanS}. We let $\sL$ (resp.
$\tilde \sL$) be the line bundles on $\P^3$ (resp. $\widetilde
\P^3$) in the decomposition
$$
\pi_{*}\sO_X=\sO_{\P^3}\oplus \sL^{-1} \ (\textrm{resp.}\
\pi_{*}\sO_{\widetilde X}=\sO_{\widetilde \P^3}\oplus \tilde
\sL^{-1} ).
$$
It satisfies $\sL^{\otimes 2}\simeq \sO_{\P^3}(R)$ (resp. $\tilde
\sL^{\otimes 2}\simeq \sO_{\widetilde \P^3}(\widetilde R)$ for
$\widetilde R$ the strict transform of $R$ under $\phi$). One has
the decomposition of tangent sheaf
$$
\pi_{*}\sT_{\widetilde X}=\sT_{\P^3}(-\log \widetilde R)\oplus
\sT_{\widetilde \P^3}\otimes \tilde \sL^{-1}.
$$
It follows that one has natural isomorphism (see also Prop. 2.1
\cite{CVanS})
$$
H^1(\sT_{\widetilde X})\simeq H^1(\sT_{\P^3}(-\log \widetilde
R))\oplus H^1(\sT_{\widetilde \P^3}\otimes \tilde \sL^{-1}).
$$
By Corollary 4.3 in \cite{CVanS}, the space $H^1(\sT_{\P^3}(-\log
\widetilde R))$ is naturally isomorphic to the space of
\emph{equisingular deformations} of $R$ in $\P^3$. Since $R$ is a
divisor with normal crossings, this space is isomorphic to the space
of infinitesimal deformations of the arrangement $\mathfrak{A}$ in
$\P^3$. Furthermore, the space of transverse deformation
$H^1(\sT_{\widetilde \P^3}\otimes \tilde \sL^{-1})$ has dimension
(after Prop. 5.1 \cite{CVanS})
\begin{eqnarray*}
  h^1(\sT_{\widetilde \P^3}\otimes \tilde \sL^{-1}) &=& h^1(\sT_{ \P^3}\otimes \sL^{-1})+\sum_{(i,j)\in I}h^0(K_{H_{ij}}) \\
   &=& h^1(\sT_{ \P^3}\otimes \sO(-3))+\sum_{(i,j)\in I}h^0(K_{\P^1})  \\
   &=& 0,
\end{eqnarray*}
where the vanishing of the first summand follows from Bott's
vanishing theorem on homogenous vector bundles over the projective
spaces. The lemma thus follows.
\end{proof}
From now on we let $\ModuliAR$ denote the moduli space of
arrangements $\mathfrak{A}$ of eight planes in $\P^3$ in general
position, and let $\mathfrak{M}_{CY}$ denote the moduli space of
$\widetilde X$.
\begin{corollary}\label{etale of moduli spaces}
The moduli map $\mathfrak{M}_{AR} \to \mathfrak{M}_{CY}$ is
\'{e}tale.
\end{corollary}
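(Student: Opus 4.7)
\emph{Plan.} The assignment $\mathfrak{A} \mapsto \widetilde X$ is functorial and algebraic, so it induces a morphism of quasi-projective varieties $\mathfrak{M}_{AR} \to \mathfrak{M}_{CY}$ of finite type. Both moduli spaces have dimension $9$: on the arrangement side, $8\cdot 4 - \dim \mathrm{PGL}(4) - 8 = 9$ after quotienting by projective equivalence and by rescaling the row vectors that define the defining matrix; on the Calabi-Yau side, by the Hodge numbers $(1,9,9,1)$ (and Bogomolov-Tian-Todorov unobstructedness). Consequently, to establish \'{e}taleness it suffices to check that the induced map on Zariski tangent spaces is an isomorphism at every point.

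Fix $[\mathfrak{A}]\mapsto [\widetilde X]$. Since $\widetilde X$ is a Calabi-Yau threefold with finite automorphism group and unobstructed deformations, one has canonically $T_{[\widetilde X]}\mathfrak{M}_{CY} = H^1(\widetilde X, \sT_{\widetilde X})$. On the other side, $T_{[\mathfrak{A}]}\mathfrak{M}_{AR}$ is the space of infinitesimal deformations of the arrangement modulo the infinitesimal $\mathrm{PGL}(4)$-action; by the long exact sequence associated with the short exact sequence $0 \to \sT_{\P^3}(-\log R) \to \sT_{\P^3} \to \oplus_i N_{H_i/\P^3} \to 0$, this space is precisely $H^1(\P^3, \sT_{\P^3}(-\log R))$, using $H^1(\sT_{\P^3}) = 0$ and the injectivity of the Lie-algebra map $\mathfrak{pgl}(4) \to \oplus_i H^0(N_{H_i/\P^3})$ at a generic arrangement.

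The differential of $\mathfrak{M}_{AR}\to\mathfrak{M}_{CY}$ at $[\mathfrak{A}]$ is the Kodaira-Spencer-type assignment sending a first-order deformation of the arrangement to the induced first-order deformation of the resolved double cover $\widetilde X$. The proof of Lemma \ref{infinitesimal deformation of CY} identifies this very map, via the pushforward decomposition $\pi_*\sT_{\widetilde X} = \sT_{\P^3}(-\log \widetilde R) \oplus \sT_{\widetilde \P^3}\otimes \widetilde\sL^{-1}$ and the vanishing of $H^1$ of the second summand, with an isomorphism $H^1(\sT_{\P^3}(-\log R)) \stackrel{\sim}{\to} H^1(\sT_{\widetilde X})$. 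Hence the tangent map is an isomorphism at every point and the moduli map is \'{e}tale.

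The one point requiring care is that the abstract isomorphism furnished by Lemma \ref{infinitesimal deformation of CY} really coincides with the differential of the moduli map, rather than being some other natural identification between the same vector spaces. This amounts to checking compatibility of the pushforward decomposition with the Kodaira-Spencer construction for cyclic double covers: the summand $\sT_{\P^3}(-\log \widetilde R)$ records precisely the first-order motion of the ramification divisor, while $\sT_{\widetilde \P^3}\otimes \widetilde\sL^{-1}$ records the \emph{transverse} deformations of $\widetilde X$ relative to the cover structure. Since the transverse piece vanishes cohomologically (by Bott on $\P^3$ and the normal-bundle contributions $H^0(K_{\P^1}) = 0$ along the exceptional divisors over the $H_{ij}$'s), the identification of Lemma \ref{infinitesimal deformation of CY} is forced to be the differential, and the \'{e}taleness follows.
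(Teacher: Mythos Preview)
Your proposal is correct and follows precisely the route the paper intends: the corollary is stated without proof immediately after Lemma~\ref{infinitesimal deformation of CY}, and your argument spells out the implicit reasoning---both moduli spaces are smooth of dimension $9$ (by BTT on the CY side), and the lemma identifies the differential of the moduli map with the natural isomorphism $H^1(\sT_{\widetilde{\P}^3}(-\log \widetilde R)) \xrightarrow{\sim} H^1(\sT_{\widetilde X})$, whence \'etaleness. Your closing paragraph, verifying that the abstract isomorphism really is the Kodaira--Spencer differential via the equisingular/transverse decomposition of \cite{CVanS}, is exactly the point one must check and is handled correctly.
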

There are many ways to construct a good family for
$\mathfrak{M}_{AR}$. Here is one. Let $\mathfrak A$ be an
arrangement in general position. The moduli point of $\mathfrak A$
in $\mathfrak{M}_{AR}$ can be uniquely represented by the matrix $A$
of the form:
$$
\left(
  \begin{array}{cccccccc}
    1 & 0 & 0 & 0  & 1 & 1&1 & 1 \\
     0 & 1 & 0 & 0  & 1 & *& * & * \\
      0 & 0 & 1 & 0 & 1 & * & * & * \\
      0 & 0 & 0 & 1 & 1 & * & * & * \\
  \end{array}
\right)^{t}
$$

Conversely a matrix $A$ in the above form whose all $4\times 4$
minors are nonzero represents an arrangement $\mathfrak A$ in general
position. Thus $\mathfrak{M}_{AR}$ can be realized as an open
subvariety of the affine space $\C^9$ and it admits a natural good
family $f_0: \sX \to {\mathfrak{M}_{AR}}$, where $\sX$ is obtained by
simultaneous resolution of the singular double octic over ${\mathfrak{M}_{AR}}$.
Note also that a good family for $\mathfrak{M}_{AR}$
gives rise to a good family for $\mathfrak{M}_{CY}$.

\begin{remark}
This construction and the above lemma can actually be generalized to
arrangements of $2n+2$ hyperplanes in $\P^n$. It yields a moduli
space of smooth CY $n$-folds whose primitive Hodge numbers are given
by $h_{pr}^{p,n-p}(\widetilde{X}) = \binom{n}{p}^2$. For details, we
refer to \cite{S}.
\end{remark}

Now we proceed with the construction of another Calabi-Yau threefold
$Y$ for a given arrangement $\mathfrak{A}$.

\subsection{The Kummer cover}\label{Kummer}

Let $A = (a_{ij})$ denote a $(8 \times 4)$-matrix associated to
$\mathfrak{A}$ as described above. Furthermore let $B = (b_{ij})$
denote a matrix in $M(4\times 8,\C)$ such that the sequence
$$
0 \longrightarrow \C^4 \stackrel{A}{\longrightarrow} \C^8
\stackrel{B}{\longrightarrow} \C^4 \longrightarrow 0
$$
is exact. We let $Y$ denote the complete intersection of the four
quadrics in $\P^7$ defined by the four equations
$$
b_{i1} y_1^2 + b_{i2} y_2^2 + ... + b_{i8} y_8^2 \quad = \quad 0
\quad,\quad 1 \leq i \leq 4.
$$
In case $\mathfrak{A}$ is in general position, the space $Y$ is smooth
(see Proposition 3.1.2 in \cite{Te0}). There is a simple relation between
the singular double octic $X$ and $Y$. To describe it,
let $G_1 = \F_2^8$ denote the elementary abelian $2$-group of order
$256$. For  $a = (a_1,...,a_8) \in G_1$ we define an automorphism
$\sigma_a : \P^7 \rightarrow \P^7$ by
$$
\sigma_a(x_1:\cdots :x_i:\cdots:x_8) =
((-1)^{a_1}x_1:\cdots:(-1)^{a_i} x_i:\cdots:(-1)^{a_8}x_8).
$$
The group $G_1$ contains a distinguished normal subgroup $N_1 \lhd G_1$
of index two, the kernel of the map $a \mapsto \sum_{i=1}^8 a_i$.

\begin{proposition}
 $X \cong Y/N_1$
\end{proposition}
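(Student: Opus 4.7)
The plan is to construct a natural $G_1$-invariant morphism $\varphi: Y \to \P^3$, show that it induces a degree-$2$ map $Y/N_1 \to \P^3$ ramified along $R$, and then invoke the uniqueness of the double cover of $\P^3$ branched over $R$ (noted in Section 2.1) to identify $Y/N_1$ with $X$.

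The morphism $\varphi$ arises from the exact sequence $0 \to \C^4 \xrightarrow{A} \C^8 \xrightarrow{B} \C^4 \to 0$: on $Y$ the vector $(y_1^2,\ldots,y_8^2)$ lies in $\ker B = \operatorname{im} A$, so there are unique $x_1,\ldots,x_4$ with $y_i^2 = \sum_j a_{ij}x_j$, and one sets $\varphi(y) := (x_1:\cdots:x_4)$. Since squaring annihilates sign changes, $\varphi$ is $G_1$-invariant. The element $(1,\ldots,1) \in G_1$ acts trivially on $\P^7$ and has even parity, so it lies in $N_1$; consequently $G_1$ and $N_1$ act effectively through groups of order $128$ and $64$ respectively. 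For a generic $p \in \P^3 \setminus R$, each $y_i^2 = \sum_j a_{ij}x_j$ is nonzero so each $y_i$ is determined up to sign, giving $|\varphi^{-1}(p)| = 2^7 = 128$; this is a single $G_1$-orbit, which splits into two $N_1$-orbits because $[G_1:N_1] = 2$. Hence $Y/N_1 \to \P^3$ is generically $2$-to-$1$.

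For the branch locus I would examine a generic $p \in H_1$, where $\sum_j a_{1j}x_j = 0$ but $\sum_j a_{ij}x_j \neq 0$ for $i \geq 2$. Every $y \in \varphi^{-1}(p)$ then has $y_1 = 0$ and $y_i \neq 0$ for $i \geq 2$. A direct stabilizer computation shows that exactly four elements of $G_1$ fix such a $y$ projectively, namely $\{0,\, e_1,\, (0,1,\ldots,1),\, (1,\ldots,1)\}$ (the first two give $\sigma_a(y) = y$, the last two give $\sigma_a(y) = -y$), of which precisely two lie in $N_1$; the two generic $N_1$-orbits therefore fuse into a single orbit over $p$, so $\varphi$ is ramified over $H_1$, and by the symmetry of the construction over $R = \sum_{i=1}^8 H_i$. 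Since $Y/N_1$ is normal (quotient of the smooth variety $Y$ by a finite group) and finite of degree $2$ over $\P^3$ with branch divisor $R$, the uniqueness recalled above identifies it with $X$. The only delicate step in this plan is the parity bookkeeping in the stabilizer calculation at the boundary strata $\{y_i = 0\}$; everything else is formal once $\varphi$ is in hand.
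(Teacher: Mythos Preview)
Your proposal is correct and follows essentially the same route as the paper: build the $G_1$-invariant squaring map $Y\to\P^3$ from the exact sequence, observe that $Y/N_1\to\P^3$ has degree $2$, identify the branch divisor as $R=\sum H_i$, and then invoke the uniqueness of the double cover branched along $R$. The only difference is in how the branch locus is pinned down: the paper notes that the nontrivial element of $G_1/N_1$ is represented by any $e_i$ and reads off its fixed locus $\{y_i=0\}$ directly, whereas you carry out an explicit fibre and stabilizer count; both arguments are equivalent and your parity bookkeeping is correct.
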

\begin{proof}
First remark that the matrix $A$ defines a linear embedding $j :
\P^3 \rightarrow \P^7$ of projective spaces. As $Ker(B)=Im(A)$, the
map $\pi_1 : \P^7 \rightarrow \P^7$, $(y_1:...:y_8) \mapsto
(y_1^2:...:y_8^2)$ maps $Y$ onto the image of $j$ and realizes
$\P^3$ as the quotient of $Y$ by $G_1$. The quotient map $\pi_1$
factors over $Y/N_1$, and the degree of $Y/N_1$ over $\P^3$ is two.
The ramification locus of the resulting map $\alpha : Y/N_1
\rightarrow \P^3$ is precisely $R$. Indeed, the nontrivial element
in $G_1/N_1$ is represented by any vector $e_i = (0,...,1,...,0)$ in
the standard bases of $\F_2^8$, $1 \leq i \leq 8$. Consequently, the
ramification locus consists of all points in $\P^7$ with one
coordinate zero. The embedding $j : \P^3 \rightarrow \P^7$ maps the
planes $H_1,...,H_8$ onto the intersection of $j(\P^3)$ with the
coordinate hyperplanes in $\P^7$. As the double octic $X$ is
uniquely determined by $R$, it follows $X =Y/N_1$.
\end{proof}

This geometric relation between $Y$, $X$ and $\widetilde X$
immediately leads to the following isomorphism of Hodge-structures.

\begin{proposition}\label{Prop34}
For a given arrangement $\mathfrak{A}$ in general position there
exists a natural isomorphism
$$
\rm{H}^3(Y,\Q)^{N_1} \quad \cong \quad  \rm{H}^3(X,\Q) \quad \cong \quad\rm{H}^3(\widetilde{X},\Q)
$$
of rational polarized Hodge structures. (Here $\rm{H}^3(Y,\Q)^{N_1}$
denotes the subspace of invariants under $N_1$.)
\end{proposition}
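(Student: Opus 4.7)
The plan is to establish the two isomorphisms in turn by standard Hodge-theoretic arguments adapted to the geometry just described.

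For $H^3(Y,\Q)^{N_1} \cong H^3(X,\Q)$: since $Y$ is smooth projective and $N_1$ is a finite group acting with $Y/N_1 = X$, the classical transfer (averaging by $\tfrac{1}{|N_1|}$) shows that the pullback $\pi_1^* : H^i(X,\Q) \to H^i(Y,\Q)$ is injective with image the subspace of $N_1$-invariants in every degree. Since $X$ has only quotient singularities it is a V-manifold, so its rational cohomology carries a pure Hodge structure; because $\pi_1^*$ is a morphism of Hodge structures, we obtain $H^3(X,\Q) \cong H^3(Y,\Q)^{N_1}$ as polarized $\Q$-Hodge structures.

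For $H^3(X,\Q) \cong H^3(\widetilde{X},\Q)$: I would exploit the blow-up Mayer--Vietoris sequence attached to the resolution $\psi:\widetilde{X}\to X$. Writing $\Sigma \subset X$ for the singular locus and $E := \psi^{-1}(\Sigma)$ for the exceptional divisor, one has
$$
\cdots \to H^i(X) \to H^i(\widetilde{X}) \oplus H^i(\Sigma) \to H^i(E) \to H^{i+1}(X) \to \cdots.
$$
Now $\Sigma$ is a union of rational curves and $E$ a union of smooth rational ruled surfaces, so both have vanishing odd cohomology, and the relevant portion reduces to
$$
H^2(\widetilde{X}) \oplus H^2(\Sigma) \to H^2(E) \to H^3(X) \to H^3(\widetilde{X}) \to 0.
$$
The required isomorphism follows once the leftmost arrow is surjective, which I would verify componentwise: on each exceptional component $E_\alpha$ (a Hirzebruch surface $\F_0$ over a line $\Sigma_\alpha \cong \P^1$) the pullback from $H^2(\Sigma_\alpha)$ supplies the fibre class, while the restriction of the exceptional class $[E_\alpha] \in H^2(\widetilde{X})$ supplies an independent normal class, together spanning $H^2(E_\alpha)\cong \Q^2$. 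Alternatively one can invoke the decomposition theorem for $\psi$: since $X$ has only quotient singularities, $IC_X \simeq \Q_X[\dim X]$, so the supplementary summands of $R\psi_*\Q_{\widetilde{X}}$ are supported on $\Sigma$; as $\Sigma$ has vanishing $H^1$ and the fibres have no odd cohomology, these summands contribute nothing in degree three.

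Compatibility with the polarizations is automatic, as on each of $Y$, $X$ and $\widetilde{X}$ the polarization of $H^3$ is induced by cup product with the pullback of a hyperplane class from the chosen projective embedding, and the morphisms $\pi_1$ and $\psi$ pull ample classes back to semi-ample ones in a compatible way. The principal obstacle is the step-two input that the exceptional locus contributes nothing to $H^3$: with the decomposition-theorem approach this is immediate, and with the Mayer--Vietoris approach it reduces to the concrete linear-algebra check on the Hirzebruch components sketched above.
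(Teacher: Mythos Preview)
Your treatment of the first isomorphism is essentially the paper's: $X=Y/N_1$ with $Y$ smooth, so transfer identifies $H^3(X,\Q)$ with the $N_1$-invariants and shows it is pure.

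For the second isomorphism the paper takes a shorter route that bypasses any analysis of the exceptional locus. Having just shown that $H^3(X,\Q)$ is pure of weight~$3$, it invokes the general mixed-Hodge-theoretic fact that for a resolution $\psi:\widetilde{X}\to X$ the kernel of $\psi^*:H^3(X,\Q)\to H^3(\widetilde{X},\Q)$ is the subspace of weight~$<3$; purity forces this kernel to vanish, and then a dimension count ($\dim H^3(Y,\Q)^{N_1}=20=\dim H^3(\widetilde{X},\Q)$) finishes. Your Mayer--Vietoris argument is correct in outline and your decomposition-theorem alternative goes through cleanly, but the explicit claim that each exceptional component is an $\F_0$ over a line $\Sigma_\alpha\cong\P^1$ is not quite accurate: the $28$ lines meet pairwise in $56$ triple points and the blow-ups are performed \emph{sequentially} on strict transforms, so later exceptional divisors need not be $\F_0$ and earlier ones get modified. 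This does not affect the vanishing of odd cohomology (everything in sight is rational), but it does complicate your componentwise surjectivity check on $H^2$. The payoff of the paper's argument is that it needs nothing about the resolution beyond its existence; the payoff of yours is that it makes the geometric source of the isomorphism visible and would still work even without the dimension of $H^3(\widetilde{X},\Q)$ being known in advance.
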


\begin{proof}
As $X=Y/N_1$, one has immediately has $\rm{H}^3(Y,\Q)^{N_1} \quad
\cong \quad  \rm{H}^3(X,\Q)$, showing that $H^3(X,\Q)$ carries a
pure Hodge structure. Let $\psi: \widetilde{X} \longrightarrow X$ be
the resolution map described above. As in general the kernel of the
map $\psi^*: {\rm H}^3(X,\Q) \longrightarrow {\rm
H}^3(\widetilde{X},\Q)$ is the part of smaller Hodge-weight (see
Corollary 5.42 of \cite{ps}), we conclude that $\psi^*$ is
injective. As the dimensions agree, $\phi^*$ is an isomorphism.
(This, of course, can also directly be seen from the Leray spectral
sequence for $\psi$.)

\end{proof}

\subsection{The hyperelliptic locus}
There exist an interesting locus in $\ModuliAR$ where the
Hodge-structure is a third exterior power:
$$\rm{H}^3(\widetilde{X},\Q)=\wedge^3\rm{H}^1(C,\Q)$$
where $C$ is a hyperelliptic curve of genus $3$. Such a $C$ is
obtained by a two-fold cover of $\P^1$ ramified over eight points.
There exists a natural morphism
$$
\gamma : (\P^1)^3 \longrightarrow \P^3
$$
which sends the point $s = ((x_1:y_1),(x_2:y_2),(x_3:y_3))$ to
$(c_{0}(x,y):c_{1}(x,y):c_{2}(x,y):c_{3}(x,y))$, where
$x=(x_1,x_2,x_3), \ y=(y_1,y_2,y_3)$ and $c_{i}(x,y)$ is the $i$-th
coefficient of the polynomial of the variables $t,\ s$
$$
\prod_{i=1}^3 (x_i t + y_i s) = c_{0}s^3 + c_{1}s^2 t + c_{2} s t^2
+ c_{3} t^3 \in \C[s,t].
$$
This morphism $\gamma$ is Galois covering with Galois group $S^3$,
the symmetric group acting on three letters.\\

One readily checks that for any  $p = (a:b) \in\P^1$
the set $H :=\gamma(\{p\} \times \P^1 \times \P^1)$
is the hyperplane with equation
$$
b^3 z_3 - ab^2 z_2 + a^2 b z_1 - a^3 z_0 \quad = \quad 0.
$$
We say that $H$ is the plane {\em associated} to the point $p$.

\begin{lemma}\label{HyperLem1}
Let $(p_1,...,p_8)$ denote a collection of eight distinct points in
$\P^1$, and let $H_i:=\gamma(\{ p_i \} \times \P^1
\times \P^1)$ the associated planes.
Then $\mathfrak{A} = (H_1,...,H_8)$ is an arrangement of planes in
general position.
\end{lemma}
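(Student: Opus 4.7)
The plan is to translate the geometric statement into the minor condition and then recognize the resulting determinant as a (generalized) Vandermonde. By the paper's own definition, $\mathfrak{A}$ is in general position precisely when every $4\times 4$ minor of the coefficient matrix is nonzero, which is equivalent to the condition that no four of the planes share a common point in $\P^3$.

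Writing $p_i = (a_i : b_i)$, the equation of $H_i$ displayed just above the lemma shows that the coefficient row of $H_i$ (in the variables $z_0, z_1, z_2, z_3$) is
\[
(-a_i^3,\; a_i^2 b_i,\; -a_i b_i^2,\; b_i^3).
\]
This is, up to an alternating sign in the columns, the image of $p_i$ under the cubic Veronese embedding $\P^1 \to \P^3$, $(a:b) \mapsto (a^3 : a^2 b : a b^2 : b^3)$. For any four distinct indices $i_1 < i_2 < i_3 < i_4$, the corresponding $4\times 4$ minor is therefore, up to a global sign, the homogeneous Vandermonde determinant
\[
\det\bigl(a_{i_\mu}^{3-\nu} b_{i_\mu}^{\nu}\bigr)_{\mu,\nu=1,\ldots,4}
\;=\; \pm \prod_{1\le \mu<\nu\le 4}(a_{i_\mu} b_{i_\nu} - a_{i_\nu} b_{i_\mu}).
\]
(One verifies this standard identity by specializing $b_i=1$ to reduce to the usual Vandermonde in the $a_i$ and homogenizing, or by induction expanding along the last row.)

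Since the eight points $p_1,\ldots,p_8$ are pairwise distinct in $\P^1$, each factor $a_{i_\mu} b_{i_\nu} - a_{i_\nu} b_{i_\mu}$ is nonzero, hence every $4\times 4$ minor of the coefficient matrix of $(H_1,\ldots,H_8)$ is nonzero. This is precisely the condition that $\mathfrak{A}$ is in general position, completing the proof. The only real content is the Vandermonde identity; no step presents a genuine obstacle.
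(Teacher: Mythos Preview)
Your proof is correct and follows essentially the same idea as the paper's: both recognize that the coefficient matrix of the arrangement has Vandermonde structure, so every $4\times 4$ minor is nonzero. The only difference is cosmetic --- the paper first normalizes to affine coordinates $p_i=(-1:a_i)$ to obtain the classical Vandermonde matrix $(a_i^{\,j-1})$, whereas you work homogeneously throughout (note a small indexing slip: the exponents should read $a_{i_\mu}^{\,4-\nu}b_{i_\mu}^{\,\nu-1}$ for $\nu=1,\dots,4$, or equivalently index $\nu$ from $0$ to $3$).
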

\begin{proof}
In appropriate coordinates on $\P^1$, we may assume that $p_i =(-1:a_i)$
with $a_i \in \C$ pairwise distinct ($1 \leq i \leq 8$).
The matrix $A \in M(8\times 4,\C)$ corresponding to the arrangement
$\mathfrak{A}$ then has the form
$$
A \quad = \quad \begin{pmatrix}
1 & a_1 & a_1^2 & a_1^3 \\
1 & a_2 & a_2^2 & a_2^3 \\
\vdots & \vdots & \vdots & \vdots \\
1 & a_8 & a_8^2 & a_8^3
\end{pmatrix}
$$
Every $(4 \times 4)$-submatrix of $A$ is a Vandermonde matrix, hence
its determinant is non-zero. This proves that $\mathfrak{A}$ is in
general position.
\end{proof}

Let $C$ be the hyperelliptic curve of genus $3$ branched at
$p_1,...,p_8 \in \P^1$, and let $q : C \rightarrow \P^1$ denote the
corresponding covering map.
The threefold product
$$
h : C^3 \stackrel{q^3}{\longrightarrow} (\P^1)^3
\stackrel{\gamma}{\longrightarrow} \P^3
$$
is a Galois covering of degree $2^3 \cdot 6 = 48$. Its Galois group
$G_2$ is isomorphic to a semi-direct product $N \rtimes S_3$, where
$N = \langle \iota_1,\iota_2,\iota_3 \rangle$ is the group generated
by the hyperelliptic involution. We let $N_2$ denote the index two
subgroup $N' \rtimes S_3$ of $G_2$, where $N'$ is the kernel of $N
\cong \F_2^3 \stackrel{\sum}{\longrightarrow} \F_2$, where the
isomorphism sends $\iota_1$ to $(1,0,0)$ etc. We factor $h$ over
the set of $N_2$-orbits and obtain a commutative diagram
$$
\begin{array}{rcc}
C^3 & \stackrel{\delta}{\longrightarrow} & C^3 / N_2 \\
{\scriptstyle q^3} \downarrow \hspace*{0.25cm} & & ~ \downarrow
{\scriptstyle
\pi} \\
(\P^1)^3 & \stackrel{\gamma}{\longrightarrow} & \P^3
\end{array}
$$

\begin{lemma}
The double cover $\pi : C^3/N_2 \rightarrow \P^3$ branches along the
union of the hyperplanes $H_1,...,H_8$ associated to the points
$p_1,...,p_8$. So $C^3/N_2 \cong X$, where $X$ is the double octic
determined by $H_1,...,H_8$.
\end{lemma}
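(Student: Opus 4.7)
The plan is to identify $C^3/N_2 \to \P^3$ as the unique double cover of $\P^3$ ramified along $R = H_1 + \dots + H_8$, via a direct ramification analysis.

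First, I would verify that $\pi : C^3/N_2 \to \P^3$ is indeed a degree-$2$ map. This is immediate since $[G_2 : N_2] = 2$ and $h : C^3 \to \P^3$ is $G_2$-Galois of degree $48$, so the induced map $C^3/N_2 \to C^3/G_2 = \P^3$ has degree $2$.

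Next, I would locate the branch divisor by analyzing the $N_2$-orbit structure on the fibers of $h$. For a generic point $x\in\P^3\setminus\bigcup_i H_i$, its preimage in $(\P^1)^3$ consists of $6$ points (one orbit under $S_3$) with coordinates disjoint from $\{p_1,\dots,p_8\}$, and each lifts to $8$ points of $C^3$ since $q^3$ is unramified there; the stabilizer of a typical such point in $G_2$ is trivial, so $h^{-1}(x)$ is a single free $G_2$-orbit of size $48$, splitting into two free $N_2$-orbits. Hence $\pi$ is unramified over $x$. On the other hand, for a generic $x\in H_i$ we may write $x=\gamma(p_i,t_2,t_3)$ with $t_2,t_3$ distinct and disjoint from $\{p_1,\dots,p_8\}\setminus\{p_i\}$; its $S_3$-orbit in $(\P^1)^3$ has $6$ points, and lifting to $C^3$ over each such point gives $1\cdot 2\cdot 2 = 4$ points (the factor $1$ because $p_i$ is a Weierstrass branch of $q$), for a total of $24$ points in $h^{-1}(x)$. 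The stabilizer of the lift $(c,d_1,e_1)$ with $q(c)=p_i$ is generated by the hyperelliptic involution $\iota_1$ acting on the first factor, which fixes the Weierstrass point $c$; since $\iota_1\in N\setminus N'\subset G_2\setminus N_2$, the stabilizer in $N_2$ is trivial. Thus the $24$ points of the fiber form a single free $N_2$-orbit, and $\pi$ is ramified over $x$.

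The two computations together show that the branch locus of $\pi$ is set-theoretically $H_1\cup\cdots\cup H_8$, and because ramification is simple (the cover has degree $2$) it equals $R=\sum_i H_i$ as a divisor. Here the key (but routine) check is the orbit count at generic points of the $H_i$; the main subtlety is remembering that it is precisely the parity condition defining $N' \subset N$ that ensures the Weierstrass-point stabilizer $\iota_i$ lies outside $N_2$, so that the orbit fails to split.

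Finally, to conclude $C^3/N_2\cong X$, I would invoke the uniqueness of the double cover of $\P^3$ branched along a given even-degree divisor: since $\mathrm{Pic}(\P^3)=\Z$ is torsion-free and $\deg R=8$ is even, there is a unique such double cover, namely $X$. Both $\pi : C^3/N_2\to\P^3$ and $\pi: X\to\P^3$ are double covers of $\P^3$ with branch divisor $R$, so they coincide. (Alternatively, one can exhibit a square root of $\sO_{\P^3}(R)$ and identify $C^3/N_2$ as $\mathrm{Spec}_{\sO_{\P^3}}(\sO_{\P^3}\oplus\sL^{-1})$ with defining equation cutting out the branch divisor $R$, matching the construction of $X$.)
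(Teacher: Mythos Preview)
Your proof is correct. The paper takes a slightly more direct route: rather than counting $N_2$-orbits fiber by fiber, it observes that the branch locus of the degree-two quotient $\pi: C^3/N_2 \to C^3/G_2 = \P^3$ is the image of the fixed-point set of any representative of the nontrivial class in $G_2/N_2$. Taking $\iota_1$ as such a representative, its fixed locus $L_1 \subset C^3$ satisfies $q^3(L_1) = \{p_1,\ldots,p_8\} \times \P^1 \times \P^1$, and the image of this under $\gamma$ is $\bigcup_i H_i$ by the very definition of the associated hyperplanes. Both arguments then finish the same way, via uniqueness of the double cover of $\P^3$ branched along $R$. The paper's fixed-locus argument is shorter and bypasses the case analysis; your orbit count is more hands-on and makes equally explicit the key point that the parity condition defining $N'\subset N$ forces the Weierstrass stabilizers $\iota_j$ to lie outside $N_2$.
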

\begin{proof}
The Galois group of $\pi$ is isomorphic to
$G_2/N_2$, and it is generated by the image of $\iota_1$. Hence the
ramification locus of $\pi$ is the image of the fixed locus $L_1
\subseteq C^3$ of $\iota_1$ under $\pi \circ \delta$. By the
commutativity of the above diagram, it coincides with the image of
$L_1$ under $\gamma \circ q^3$. Obviously we have
$$
q^3(L_1) \quad = \quad \{ p_1,...,p_8 \} \times \P^1 \times \P^1
$$
so that our claim follows from Lemma \ref{HyperLem1}. As $C^3/N_2$
and $X$ are both double covers of $\P^3$ with the same ramification
divisor, they are isomorphic.
\end{proof}

\begin{proposition}
Let $p_1,...,p_8$ be eight distinct points in $\P^1$, $\mathfrak{A}$
the associated arrangement of hyperplanes and $\widetilde{X}$ the CY
manifold which corresponds to this arrangement. Furthermore, let $q
: C \rightarrow \P^1$ denote the hyperelliptic curve which ramifies
at $p_1,...,p_8$. Then we have an isomorphism of rational polarized
Hodge structures
$$
\rm{H}^3(\widetilde{X},\Q) \quad \cong \quad \rm{H}^3(X,\Q) \quad \cong \quad
\wedge^3 ~\rm{H}^1(C,\Q).
$$
\end{proposition}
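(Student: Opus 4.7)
The plan is to use the two isomorphisms together. The first isomorphism $H^3(\widetilde X,\Q)\cong H^3(X,\Q)$ is already supplied by Proposition \ref{Prop34}, so the only new content is the identification $H^3(X,\Q)\cong \wedge^3 H^1(C,\Q)$. To obtain this I would exploit the preceding lemma, which realizes $X\cong C^3/N_2$ with $N_2=N'\rtimes S_3\subset G_2$. Since $N_2$ is a finite group acting on the smooth projective variety $C^3$, rational cohomology descends to invariants, so
$$
H^3(X,\Q)\;\cong\;H^3(C^3,\Q)^{N_2},
$$
and the strategy reduces to computing this invariant subspace in a Künneth-explicit way.

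Next I would decompose $H^3(C^3,\Q)$ via the Künneth formula and carry out the invariants in two stages, first by $N'$ and then by $S_3$. Recall that the hyperelliptic involution $\iota$ acts as $-1$ on $H^1(C,\Q)$ and as $+1$ on $H^0(C,\Q)$ and $H^2(C,\Q)$. Consequently, for a Künneth summand $H^{k_1}(C)\otimes H^{k_2}(C)\otimes H^{k_3}(C)$ with $k_1+k_2+k_3=3$, the element $\iota_1^{a_1}\iota_2^{a_2}\iota_3^{a_3}\in N$ acts by the sign $(-1)^{\sum a_i k_i}$. Restricting to $a\in N'$ (i.e.\ $a_1+a_2+a_3$ even) a direct enumeration shows that every mixed summand of type $(2,1,0)$ and its permutations has zero $N'$-invariants, while on $H^1(C)\otimes H^1(C)\otimes H^1(C)$ each element of $N'$ acts trivially. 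Therefore
$$
\bigl(H^3(C^3,\Q)\bigr)^{N'}\;=\;H^1(C,\Q)^{\otimes 3}.
$$

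It remains to take $S_3$-invariants of $H^1(C,\Q)^{\otimes 3}$. Here the crucial point — and the step most at risk of being miscomputed — is that the natural $S_3$-action coming from permuting the factors of $C^3$ is the graded-commutative one, so every transposition picks up a Koszul sign $(-1)^{1\cdot 1}=-1$. Hence $S_3$ acts on $H^1(C,\Q)^{\otimes 3}$ as $\operatorname{sgn}$ times the ordinary permutation action, and the invariants are precisely the anti-symmetric tensors:
$$
\bigl(H^1(C,\Q)^{\otimes 3}\bigr)^{S_3}\;=\;\wedge^3 H^1(C,\Q).
$$
Composing the two steps gives the asserted isomorphism. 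As a sanity check, $\dim\wedge^3 H^1(C,\Q)=\binom{6}{3}=20$, agreeing with $\dim H^3(\widetilde X,\Q)=20$ recorded in the introduction.

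Finally, I would note that each identification in the chain is canonical enough to respect the additional structure. The Künneth decomposition is an isomorphism of rational Hodge structures, taking invariants under a finite group action preserves the Hodge filtration and the rational structure, and the polarization on $H^3(X,\Q)$ induced by the cup product agrees (up to a positive rational scalar coming from the degree of $\pi\circ\delta$) with the polarization on $\wedge^3 H^1(C,\Q)$ induced from the principal polarization of the Jacobian of $C$. Thus the resulting isomorphism is one of rational polarized Hodge structures, completing the proof.
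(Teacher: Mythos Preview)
Your proof is correct and takes a genuinely different route from the paper. The paper argues by inclusion: since $S_3 \subset N_2$ one has $H^3(C^3,\Q)^{N_2} \hookrightarrow H^3(C^3,\Q)^{S_3} \cong H^3(\Sym^3(C),\Q)$, then invokes the Abel--Jacobi map $\varphi:\Sym^3(C)\to J(C)$ to identify the target with $\wedge^3 H^1(C,\Q)$, and finally promotes the inclusion to an isomorphism by the dimension count $20=20$. You instead compute the $N_2$-invariants directly via K\"unneth in two stages---first the normal subgroup $N'$, which kills all mixed $(2,1,0)$ summands and leaves exactly $H^1(C,\Q)^{\otimes 3}$, then $S_3$, whose action carries the Koszul sign and therefore cuts out $\wedge^3 H^1(C,\Q)$. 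Your route is more elementary (no Jacobian, no birational geometry) and in fact more robust: the paper's assertion that $\varphi$ induces an isomorphism on $H^3$ is not literally true, since Macdonald's formula gives $b_3(\Sym^3(C))=26$ while $b_3(J)=20$ (the Abel--Jacobi map contracts a $\P^1$-bundle over a copy of $C$, contributing an extra $H^1(C,\Q)$). The paper's argument can be repaired---replace the claimed isomorphism by the injection $\varphi^*$ and check that the extra $6$-dimensional piece is killed by $N'$---but your direct computation sidesteps the issue entirely.
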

\begin{proof}
As  $X \cong C^3/N_2$, we have
$$
\rm{H}^3(X,\Q) \quad \cong \quad \rm{H}^3(C^3,\Q)^{N_2}.
$$
Since $S_3$ is contained in $N_2$, we have an inclusion
$$
\rm{H}^3(C^3,\Q)^{N_2} \hookrightarrow \rm{H}^3(C^3,\Q)^{S_3} \cong
\rm{H}^3(\Sym^3(C),\Q).
$$
Here $\Sym^3(C)$ denotes the symmetric threefold product of $C$,
i.e.\ the quotient space $C^3 / S_3$ where $S_3$ acts on $C^3$ by
permutation of the factors. Let $J$ denote the three dimensional
Jacobian of $C$. By the Abel-Jacobi theorem, the natural map
$\varphi : \Sym^3(C) \rightarrow J$ is birational, and it induces an
isomorphism between the middle cohomology spaces. Thus
$$
\rm{H}^3(\Sym^3(C),\Q) \cong \rm{H}^3(J,\Q) \cong \wedge^3~
\rm{H}^1(J,\Q) \cong \wedge^3~ \rm{H}^1(C,\Q).
$$
Now the latter space is $20$ dimensional, which is also the
dimension of $\rm{H}^3(\widetilde{X},\Q)$. This shows that the
inclusion $\rm{H}^3(\widetilde{X},\Q) \hookrightarrow \wedge^3~
\rm{H}^1(C,\Q)$ we constructed is actually an isomorphism.
\end{proof}

Let $\mathfrak{M}_8$ denote the moduli space of eight points in
$\P^1$, which is five dimensional. Lemma \ref{HyperLem1} shows that
there exist a natural embedding $\mathfrak{M}_8 \hookrightarrow
\mathfrak{M}_{AR}$. We denote its image by $\widetilde
{\mathfrak{H}}_{CY}$ and its image in $\mathfrak{M}_{CY}$ under the
map $\mathfrak{M}_{AR}\to \mathfrak{M}_{CY}$ by $\mathfrak{H}_{CY}$.

We call this the {\em hyperelliptic sublocus}.

\begin{remark}
It is worthwhile to remark that the  construction generalizes to
all $n \geq 2$. It produces a $2n-1$ dimensional hyperelliptic
locus in the $n^2$ dimensional moduli of CY manifolds, over which
the primitive middle dimensional rational Hodge structures are wedge
products of weight one Hodge structures. For $n = 2$ the space
$\ModuliCY$ arises from the moduli space of six lines in $\P^2$ in
general position. In \cite{MSY} the analogous sublocus $\HyperCY$
was characterized as those six lines which are tangent to a smooth
conic, and it was shown that it yields the family of Kummer
surfaces.
\end{remark}

\section{Classifying spaces and Canonical Variations}

We briefly recall some basic facts on Hodge structures and their
classifying spaces. Let $V$ denote a real vector space, $n \in \N$
and $b : V \times V \rightarrow \R$ a non-degenerate bilinear form
which is symmetric if $n$ is even and skew-symmetric if $n$ is odd.
Furthermore, let $\{ h^{p,q} \}$ denote a collection of non-negative
integers parameterized by $(p,q) \in \N_0^2$ such that
$$
h^{p,q} \neq 0 \quad \text{only if} \quad p+q = n \qquad \text{and}
\qquad h^{q,p} = h^{p,q} \quad \text{for all} \quad p,q \in \Z.
$$
The set $D$ of all real Hodge structures of type $\Phi = (V,\{
h^{p,q}\},b)$ is equipped with a natural structure of a complex
manifold, called the {\em classifying space} of Hodge structures of
type $\Phi$.  It is a homogeneous space of the form $D=G/K$, where
$G$ denotes the real Lie subgroup of $GL(V)$
consisting of all $\R$-linear automorphisms fixing $b$ and where $K$ denotes a
compact subgroup of $G$.\\

Now let $S$ be a complex manifold and fix a
base point $s \in S$. Every $\R$-PVHS $\V$ of type $\Phi$ over $S$
gives rise to a map
$$
\phi : S \rightarrow \Gamma \backslash D,
$$
called the \emph{period map} associated to $\V$. Here $\Gamma$
is the image of the monodromy representation
$$\tau: \pi_1(S,s) \rightarrow G$$
defined by $\V$ considered as a local system of real
vector spaces.
We refer to Chapter I in \cite{Griffiths} for more details.\\

In practice one often encounters situations where the real structure
is lost. A typical example arises from the eigenspace decomposition
of the complexification of the real cohomology of a cyclic cover
with respect to the cyclic group action (see for example \cite{DM},
\cite{ACT}). Complex polarized variations of Hodge structures are
also natural objects in Simpson's correspondence (see \cite{Sim},
\S4). We will use this slightly generalized notion of $\C$-PVHS
in this paper and we refer to \S1 in \cite{D1} for the notions of
complex Hodge structure ($\C$-HS), complex polarized Hodge structure
($\C$-PHS) and complex polarized variations ($\C$-PVHS). In \cite{D1},
also classifying spaces of $\C$-PHS of with given Hodge numbers are
defined. The corresponding classifying spaces of $\C$-PHS are also
of form $D=G/K$ where $G$ is a real Lie group and $K$ is a compact
subgroup of $G$. The difference from that of $\R$-PHS is in that $G$
is not necessarily $Sp(2n,\R)$ or $SO(r,s)$, but can also be a
special unitary
group $SU(p,q)$.\\

{\em Locally homogenous and Canonical variations}\\

Let $D_0=G_0/K_0$ be a Hermitian symmetric domain (HSD) with $G_0$
the connected component of the automorphism group of $D_0$ and $K_0$
a maximal compact subgroup of $G_0$. There are four infinite series
of classical domains after E. Cartan:\\
\begin{itemize}
  \item [(I)] $D^{I}_{p,q}=\frac{SU(p,q)}{S(U(p)\times U(q))}, \
  p\geq q\geq 1$,
  \item [(II)] $D^{II}_{n}=\frac{SO^*(2n)}{U(n)}, \ n\geq 5$,
  \item [(III)] $D^{III}_{n}=\frac{Sp(2n,\R)}{U(n)}, \ n\geq 2$,
  \item [(IV)] $D^{IV}_{n}=\frac{SO_0(2,n)}{SO(2)\times SO(n)}, \ n\geq
  5$.
\end{itemize}
\phantom{newline}

Now let $\Gamma_0$ be a torsion free discrete subgroup of $G_0$. Let
$\rho_0: G_0\to GL(F)$ be a finite dimensional complex
representation of $G_0$. S. Zucker has shown in \S4 of \cite{Zuc},
that the complex local system $\F_{\Gamma_0}=(F \times_{\Gamma_0}
D_0)$ over the complex manifold $\Gamma_0 \backslash D_0$ admits
naturally a structure of $\C$-PVHS, which we will call, following
\cite{Zuc}, a {\em locally homogenous PVHS}. By construction, it is
clear that only when $\rho$ is defined over $\R$, $\F_{\Gamma_0}$ is
a $\R$-PVHS.\\

The following example of locally homogenous PVHS was considered by
B. Gross in \cite{Gross} and also appeared in \cite{SZ} in
connection with moduli spaces of Calabi-Yau varieties. As explained
in Proposition 1.2.6 in \cite{D1}, $D_0$ determines a special node
of the Dynkin diagram of the simple complex Lie algebra
$\mathfrak{g}_{\C}=Lie(G_0)\otimes \C$. By the standard theory on
the finite dimensional representations of semi-simple complex Lie
algebras, this node also determines a fundamental representation $W$
of $\mathfrak{g}_{\C}$, and $W$ integrates to an irreducible complex
representation $\rho_{can}$ of $G_0$. When $D_0$ is of {\em tube
type}, the representation $W$ is exactly the one considered by B.
Gross in \cite{Gross} and only in this case $W$ does admit an
$G_0$-invariant real form. The locally homogenous PVHS by the above
construction are of CY type (cf. \S1 in \cite{SZ}). Following Gross
in \cite{Gross}, we call them \emph{canonical PVHS} over $\Gamma_0
\backslash D_0$. An important property of it is that the weight of
$\W$ is equal to the rank of $D_0$.

\begin{definition}\label{geometric pseudo-modularity}
Let $(D_0,\rho_0)$ be a pair consisting of a HSD $D_0$ and a
homomorphism $\rho_0 : G_0 \rightarrow G$ of real Lie groups and let
$\tilde \psi : D_0 \rightarrow D$ the map induced from $\rho_0$ and
$$\psi: \Gamma_0\backslash D_0 \to \Gamma\backslash D, \;\;\;\Gamma_0:=\rho^{-1}_0(\Gamma)$$

1) We say that a $\C$-PVHS $\V$  over $S$ {\em factors} with respect
to $(D_0,\rho_0)$, when the period map $\phi:S \to \Gamma\backslash
D$ of $\V$ factors over $\psi$. That is, we have a diagram
$$
\xymatrix{
  S \ar[rr]^{\phi} \ar[dr]_{j}
                &  &   \Gamma \backslash D      \\
                & \Gamma_0\backslash D_0 \ar[ur]_{\psi}                  }
$$
commutes.

2) For a $\C$-PVHS $\V$ of CY type we say that $\V$ factors
\emph{canonically} if it factors with respect to certain
$(D_0,\rho_{can})$
\end{definition}

\begin{lemma}\label{modularity on the level of monodromy rep}
If $\V$ factors with respect to $(D_0,\rho_0)$, then $j$ induces an
isomorphism of $\C$-PVHS $\V\simeq j^*\F$, where $\F$ is the locally
homogenous PVHS induced by $\rho_0$. Moreover the monodromy
representation $\tau : \pi_1(S,s) \rightarrow G$ of $\V$ factorizes
over $\rho_0$, i.e. there exists a homomorphism $\tau_0: \pi_1(S,s)
\rightarrow G_0$ such that the diagram
$$
\xymatrix{
  \pi_{1}(S,s) \ar[rr]^{\tau} \ar[dr]_{\tau_0}
                &  &   G      \\
                & G_{0} \ar[ur]_{\rho_0}                  }
$$
commutes.
\end{lemma}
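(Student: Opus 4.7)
The plan is to realize both $\V$ and $\F$ as pull-backs, via the commutative diagram defining the factorization, of a single tautological $\C$-PVHS on $\Gamma\backslash D$, and then to read off the monodromy factorization from the monodromy of $j^*\F$.

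Concretely, I would first introduce the tautological $\C$-PVHS $\sU$ on $\Gamma\backslash D$: its underlying local system is $\Gamma\backslash(D\times \V_s)$ with $\Gamma$ acting diagonally via its inclusion $\Gamma\subseteq G\subseteq\GL(\V_s)$, and its Hodge filtration and polarization are defined point-wise by the Hodge structure and polarization that each point of $D$ parametrizes on $\V_s$. By the very definition of the period map, $\V\cong\phi^*\sU$ as $\C$-PVHS. The same tautological procedure applied to $(\Gamma_0, D_0)$ together with the representation $\rho_0:G_0\to G\subseteq\GL(\V_s)$ reproduces, by Zucker's construction of the locally homogeneous PVHS, exactly $\F$; comparing this with $\psi^*\sU$ gives $\F\cong\psi^*\sU$. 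The first assertion then follows from the diagram chase
$$
\V \;\cong\; \phi^*\sU \;=\; j^*\psi^*\sU \;\cong\; j^*\F.
$$

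For the monodromy factorization, since $D_0$ is simply connected and $\Gamma_0$ is torsion free one has $\pi_1(\Gamma_0\backslash D_0)=\Gamma_0$; by functoriality of monodromy, the monodromy of $j^*\F$ is therefore $\rho_0\circ j_*$, and setting $\tau_0:=j_*:\pi_1(S,s)\to\Gamma_0\subseteq G_0$ yields $\tau=\rho_0\circ\tau_0$. The whole argument is a transport of structures along the given diagram; the one step that carries real content is the identification $\F\cong\psi^*\sU$, which amounts to checking that the Hodge filtration of $\F$ at $\Gamma_0\cdot d_0$ is, by construction, the one induced on $\V_s$ by the Hodge structure $\tilde\psi(d_0)\in D$, compatibly with the matching of the underlying local systems coming from $\pi_1(\Gamma_0\backslash D_0)=\Gamma_0$ and the monodromy $\rho_0|_{\Gamma_0}$ of $\F$. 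This compatibility is built into the definition of the locally homogeneous PVHS, so the lemma is in effect a formal consequence of Zucker's construction and the functoriality of pull-back.
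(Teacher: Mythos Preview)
The paper states this lemma without proof, evidently regarding it as a formal consequence of the definitions; your argument supplies exactly the expected unpacking and is correct in substance.

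Two small technical caveats are worth flagging. First, the tautological object $\sU$ on $\Gamma\backslash D$ is not literally a $\C$-PVHS, since the universal filtration on the full period domain $D$ does not satisfy Griffiths transversality except along horizontal directions; what you really use is that both $\phi$ and $\psi$ are horizontal holomorphic maps, so that $\phi^*\sU$ and $\psi^*\sU$ are genuine $\C$-PVHS. This is harmless but should be said. Second, you assert that $\Gamma_0$ is torsion free in order to identify $\pi_1(\Gamma_0\backslash D_0)$ with $\Gamma_0$; the paper only defines $\Gamma_0:=\rho_0^{-1}(\Gamma)$, which need not be torsion free a priori. The fix is standard: since $D_0$ is contractible and $\Gamma_0$ acts properly discontinuously, $\Gamma_0\backslash D_0$ is an orbifold $K(\Gamma_0,1)$, and $j_*$ still lands in $\Gamma_0$ via the orbifold fundamental group (or one simply lifts $j$ to $\tilde\jmath:\widetilde S\to D_0$ equivariantly for a homomorphism $\tau_0:\pi_1(S,s)\to\Gamma_0$, which is how the factorization of $\phi$ through $\psi$ is obtained in the first place). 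With these adjustments your proof is complete.
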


\begin{proposition}\label{ModularTypeA}
Let $\V$ be the $\C$-PVHS associated with a good family for the
coarse moduli space $\mathfrak{M}_{CY}$. If $\V$ factors
canonically, then it must factor with respect to either
$(D^{I}_{3,3},\wedge^3)$ or $(D^{I}_{1,1}\times D^{IV}_{8},
\textrm{id}\otimes \textrm{id})$.
\end{proposition}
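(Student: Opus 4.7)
The plan is to translate the hypothesis ``$\V$ factors canonically'' into numerical constraints on the pair $(D_0,\rho_{can})$, and then run through Cartan's classification of Hermitian symmetric domains to show that only the two listed pairs can arise. By Lemma \ref{modularity on the level of monodromy rep}, $\V$ is the pullback of the canonical PVHS $\F$ on $\Gamma_0\backslash D_0$, so $\F$ inherits the Hodge numbers $(1,9,9,1)$ of $\V$: weight $3$, total dimension $20$, and $h^{3,0}=h^{0,3}=1$. Since the weight of a canonical PVHS equals the rank of $D_0$, the first constraint is $\mathrm{rank}(D_0)=3$.

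Next I would decompose $D_0=D_1\times\cdots\times D_k$ into irreducible factors of ranks $r_i$ with $\sum r_i=3$. The canonical representation then factors as $\bigotimes\rho_{i,can}$ and the canonical PVHS as $\bigotimes\F_i$, each $\F_i$ of CY type and weight $r_i$. This reduces the problem to the three partitions of $3$: $(3)$, $(2,1)$, $(1,1,1)$. For each I would read off from Cartan's list the dimension and Hodge numbers of the relevant canonical representations and see which combinations reproduce $(1,9,9,1)$.

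For partition $(3)$ the rank 3 irreducible HSDs are $D^I_{p,3}$ for $p\geq 3$, $D^{II}_6$, $D^{II}_7$ and $D^{III}_3$; using the standard formulas (the $q$-th fundamental of $SU(p,q)$ for type I, a half-spin rep for type II, the $n$-th fundamental of $C_n$ for type III), only $D^I_{3,3}$ yields dimension $20$ and Hodge numbers $(1,9,9,1)$. For $(2,1)$ the rank 1 factor $D^I_{p_1,1}$ has canonical Hodge numbers $(1,p_1)$, and the $h^{0,3}$-constraint $p_1 h_2^{0,2}=1$ forces $p_1=1$ and $h_2^{0,2}=1$, so the rank 2 factor must have dimension $10$ and Hodge numbers $(1,8,1)$. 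Checking the rank 2 irreducible HSDs ($D^I_{p,2}$, $D^{II}_4$, $D^{II}_5$, $D^{III}_2$, $D^{IV}_n$), only $D^{IV}_8$ with its standard $(n+2)$-dimensional canonical representation fits. The partition $(1,1,1)$ is ruled out immediately: the $h^{0,3}$-constraint forces each factor to be $D^I_{1,1}$, giving total dimension $2^3=8\neq 20$.

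The main difficulty is essentially organizational: one needs the dimensions and Hodge numbers of the canonical PVHS on every classical HSD at hand in order to systematically rule out all but the two listed pairs. The required formulas are standard, and the one minor subtlety is that canonical factorization is phrased in $\C$-PVHS, so non-tube type domains (whose canonical $\C$-PVHS may have asymmetric Hodge numbers) must formally be considered; these are however readily excluded by the symmetry $h^{p,q}=h^{q,p}$ inherited by $\F$ from the rational PVHS $\V$.
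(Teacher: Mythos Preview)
Your approach is correct and matches the paper's: decompose $D_0$ into irreducible factors, use that the rank of $D_0$ equals the weight of the canonical PVHS (so $k\le 3$), and check each partition of $3$ against the Hodge numbers $(1,9,9,1)$. The paper disposes of $k=1$ by first restricting to tube domains and invoking Gross's classification, and disposes of $k=3$ via the local Torelli bound $\dim D_0\ge 9$ rather than your representation-dimension count, but the logic is the same; the only gap in your enumeration is that your lists of rank~2 and rank~3 irreducible HSDs omit the exceptional domains $E_{III}$ and $E_{VII}$, whose canonical representations have dimensions $27$ and $56$ and are therefore excluded immediately.
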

\begin{proof}
Assume $\V$ factors with respect to $(D_0,\rho_{can})$ for certain
HSD $D_0$. By the local Torelli theorem for Calabi-Yau manifolds we
know that $D_0$ is at least $9$ dimensional. We write $D_0=D_1\times
\cdots\times D_k$ be the decomposition into product of irreducible
HSD's. Then $\rho_{can}=\rho_{can,1}\otimes \cdots\otimes
\rho_{can,k}$ with $\rho_{can,i}$ canonical PVHS over $D_i$. By
Schur's lemma $\V \simeq \V_{1}\otimes\cdots\otimes \V_{k}$ decomposes accordingly. Since the weight of $\V$ is 3, then $k\leq 3$. \\
{\bf Case $k=1$:}. Then $D_0$ must be of tube type and $\W$
associated with $\rho_{can,1}$ must be the canonical $\R$-PVHS. By
the classification in \cite{Gross}, the pair in the
statement is the only possibility with correct Hodge numbers.\\
{\bf Case $k=2$:}. Since $D_0$ has rank 3, we can assume $D_1$ has
rank 1 and hence $D_1$ is the unit disk. It follows also the Hodge
numbers of $\V_1$ over $D_1$ is $1,1$. Since $D_2$ supports
canonical $\R$-PVHS, it must be a type IV domain. After checking the
Hodge numbers, one sees immediately the pair $(D^{I}_{1,1}\times
D^{IV}_{8}, \textrm{id}\otimes \textrm{id})$ is the unique possibility.\\
{\bf Case $k=3$:}. It follows that each $D_i,\ i=1,2,3$ has rank 1
and so the dimension of $D_1\times D_2\times D_3$ is less than 9.
\end{proof}

\section{Characteristic Subvariety}
\label{Plethysm}

We refer the reader to \S3 in \cite{Griffiths1} and references
therein for an account of the theory of {\em infinitesimal
variations of Hodge structures}, in short IVHS, initiated by P.
Griffiths. There is an important series of invariants of IVHS of a
$\C$-PVHS $\V$ of CY type over $S$, namely the {\em characteristic
subvarieties} which are contained in the projectivized tangent
bundle $\P(\sT_{S})$. The basic theory of characteristic
subvarieties is developed in \cite{SZ}. We recall the definition.

\begin{definition}
Let $\V$ a $\C$-PVHS of weight $n$ over $S$ of
weight $n$ and $(E,\theta)$ the associated Higgs-bundle.
For every $k$ with $1 \leq k \leq n$, the $k$-th
iterated Higgs field defines a morphism
$$
\theta^k : \Sym^k(\sT_S) \longrightarrow \Hom(E^{n,0},E^{n-k,k})
$$
with dual map $(\theta^{k})^* : \Hom(E^{n,0},E^{n-k,k})^*
\rightarrow \Sym^k(\Omega_S)$. Let $\mathfrak{a}_k$ denote the ideal
generated by the image of $(\theta^{k})^*$ in the symmetric algebra
$\Sym^\bullet(\Omega_S)$. Then for every $k$ with $1 \leq k \leq
n-1$, the projective variety
$$
\sC_k \quad = \quad \rm{Proj} (\Sym^\bullet(\Omega^1_S) /
\mathfrak{a}_{k+1} ) \subset \P(\sT_S)
$$
over $S$ is called the {\em $k$-th characteristic subvariety} of
$\V$.
\end{definition}
\begin{remark}
For a proper smooth family $f:\sX\to S$ of CY $n$-folds the Yukawa
coupling of $f$ is the section of $\Sym^n(\Omega^1_{S})\otimes
(R^nf_{*}\sO_{\sX})^{\otimes 2}$ defined by the $n$-th iterated
Kodaira-Spencer maps of $f$. It has significance in physics and is
an important invariant in the study of geometry on moduli spaces of
CY manifolds. Note that the $(n-1)^{th}$ characteristic subvariety
is just the vanishing locus of the Yukawa coupling.
\end{remark}

Let $s\in S$ and $(\sC_{k})_s$ be the fiber of
$\sC_k$ over $s$, which is a subvariety of $\P(\sT_{S,s})$.
following simple lemma characterizes the tangent vectors at $s$
whose classes lie in the reduced subvariety $(\sC_{k})_s^{red}$.

\begin{lemma}[Lemma 3.2 in \cite{SZ}]\label{key lemma}
Let $v\in \sT_{S,s}$ be a non-zero tangent vector, $[v]$ its class
$\P(\sT_{S,s})$ and   and $v^{k+1} \in \Sym^{k+1}(\sT_{S,s})$ the
$k+1$-th symmetric tensor power of $v$. Then:

\centerline{$[v]\in (\sC_{k})_s^{red} \subset \P(\sT_{S,s})$
if and only if $v^{k+1}\in \ker(\theta^{k+1})$.}
\end{lemma}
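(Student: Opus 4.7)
The plan is to reduce the statement to an elementary piece of linear algebra duality, once the definition of $(\sC_k)_s^{red}$ is unpacked. First I would observe that the ideal $\mathfrak{a}_{k+1}\subset \Sym^\bullet(\Omega^1_S)$ is generated in a single degree, namely by the image of $(\theta^{k+1})^*$ inside $\Sym^{k+1}(\Omega^1_S)$. Consequently the fiber $(\sC_k)_s^{red}$ coincides set-theoretically with the common zero locus inside $\P(\sT_{S,s})$ of the homogeneous degree $k+1$ polynomials in $\mathrm{Im}((\theta^{k+1}_s)^*)$; the scheme structure is irrelevant for the claim, since only closed points are being characterized.

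Next I would translate the vanishing condition into a pairing statement. Under the canonical perfect pairing
$$
\Sym^{k+1}(\Omega_{S,s}) \times \Sym^{k+1}(\sT_{S,s}) \longrightarrow \C, \qquad (\alpha, u) \mapsto \langle \alpha, u \rangle,
$$
the value of a homogeneous polynomial $\alpha$ of degree $k+1$ on the line $\C v \subset \sT_{S,s}$ is computed, up to a nonzero multinomial scalar, by $\langle \alpha, v^{k+1}\rangle$. Hence $[v]\in (\sC_k)_s^{red}$ is equivalent to the statement that $v^{k+1}$ annihilates every element of $\mathrm{Im}((\theta^{k+1}_s)^*)$, i.e.\ that $v^{k+1}$ lies in the orthogonal $\mathrm{Im}((\theta^{k+1}_s)^*)^{\perp}$ with respect to the above pairing.

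Finally I would invoke the standard duality identity
$$
\mathrm{Im}((\theta^{k+1}_s)^*)^{\perp} \;=\; \ker(\theta^{k+1}_s),
$$
valid for any linear map between finite-dimensional vector spaces. This matches the annihilator condition obtained in the previous step with the condition $v^{k+1} \in \ker(\theta^{k+1}_s)$, completing the equivalence. There is really no serious obstacle: the lemma is essentially a bookkeeping exercise, and the only point that truly needs to be checked is that $\mathfrak{a}_{k+1}$ is generated purely in degree $k+1$, without which the reduced fiber might pick up extra relations not visible from $\theta^{k+1}$ alone.
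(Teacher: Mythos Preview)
Your argument is correct. The paper itself does not supply a proof of this lemma: it is quoted verbatim as Lemma~3.2 of \cite{SZ} and used as a black box, so there is no in-paper proof to compare against. What you have written is precisely the natural proof one would expect to find in \cite{SZ}: unwind the definition of $\sC_k$ as $\mathrm{Proj}(\Sym^\bullet(\Omega^1_S)/\mathfrak{a}_{k+1})$, observe that the generating ideal sits in the single degree $k+1$, and then use the tautological identification of the evaluation of a degree $k+1$ form at $[v]$ with the pairing against $v^{k+1}$, together with $(\mathrm{Im}\,\phi^*)^\perp=\ker\phi$. Your closing remark that the only genuine content is the single-degree generation of $\mathfrak{a}_{k+1}$ is exactly right, and in the present setup that is immediate from the definition given just before the lemma.
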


\begin{corollary}\label{char variety detects the ramification locus }
Let $\mathfrak{M}$ be a coarse moduli space of polarized smooth CY
$n$-folds and $f: \sX\to S$ be a good family for it. Let $s\in S$ be
a point in the ramification locus of the moduli map $S\to
\mathfrak{M}$ of $f$. Then there is a projective linear subspace in
$(\sC_{k})_s^{red}$ for $1\leq k\leq n-1$.
\end{corollary}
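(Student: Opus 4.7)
The plan is to combine Lemma \ref{key lemma} with the CY-type hypothesis, reducing the proof to a single observation about the Kodaira-Spencer map. First I would identify the differential of the moduli map at $s$ with the Higgs field $\theta^1$. By Bogomolov-Tian-Todorov the tangent space of $\mathfrak{M}$ at $f(s)$ is canonically $H^1(X_s,\sT_{X_s})$ and the differential $\sT_{S,s}\to\sT_{\mathfrak{M},f(s)}$ is the Kodaira-Spencer map of $f$. Under the identification $\Omega^{n-1}_{X_s}\cong\sT_{X_s}$ given by contraction with a nowhere-vanishing holomorphic $n$-form, this Kodaira-Spencer map coincides with
$$\theta^1_s:\sT_{S,s}\longrightarrow \Hom(E^{n,0}_s,E^{n-1,1}_s).$$
Since $s$ lies in the ramification locus of the moduli map, this differential has non-trivial kernel, so $K:=\ker\theta^1_s\subseteq\sT_{S,s}$ is a non-zero linear subspace.

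The core technical step is to propagate this vanishing to the iterated Higgs fields. For $v\in\sT_{S,s}$ the contraction $\theta_v:=\iota_v\theta$ is a degree $-1$ endomorphism of the Hodge-graded fibre $E_s=\bigoplus_{p+q=n} E^{p,q}_s$, and by Griffiths transversality / integrability of the Higgs field the map $\theta^{k+1}(v^{k+1})$ agrees (up to a nonzero combinatorial constant) with the $(k+1)$-fold iterate $\theta_v^{k+1}$ restricted to $E^{n,0}_s$. For $v\in K$ one has $\theta_v|_{E^{n,0}_s}=0$ by definition of $K$, so the whole iterate vanishes and therefore $v^{k+1}\in\ker\theta^{k+1}$ for every $0\le k\le n-1$.

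Applying Lemma \ref{key lemma} now gives $[v]\in(\sC_k)_s^{red}$ for every nonzero $v\in K$ and every $1\le k\le n-1$. Since $K$ is a linear subspace of $\sT_{S,s}$ of positive dimension, its projectivisation $\P(K)\subset\P(\sT_{S,s})$ is a projective linear subspace contained simultaneously in each $(\sC_k)_s^{red}$, which is the assertion of the corollary.

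\textbf{Main obstacle.} None of the individual steps is deep; the only delicate matching is between the differential of the coarse moduli map, the geometric Kodaira-Spencer map, and the Higgs field $\theta^1$. Since $f$ is a good family, the generic étaleness of the moduli map together with the CY-type identification force these three to agree on a dense open set of $S$, and the equality extends to the special point $s$ by continuity of $\theta^1$ as a morphism of vector bundles over $S$. Once this compatibility is in place, the rest is a formal consequence of the fact that the iterated Higgs field $\theta^{k+1}$ factorises through $\theta^1$ on the line $E^{n,0}$.
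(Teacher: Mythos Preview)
Your proof is correct and follows essentially the same route as the paper: identify the differential of the moduli map with the Kodaira--Spencer map via Bogomolov--Tian--Todorov, use the Calabi--Yau isomorphism $H^1(X_s,\sT_{X_s})\cong H^1(X_s,\Omega^{n-1}_{X_s})$ to match this with $\theta^1_s$, and then invoke Lemma~\ref{key lemma} on the nonzero kernel. The only remark is that your continuity argument in the ``Main obstacle'' paragraph is unnecessary: the paper simply cites Griffiths' pointwise formula $\theta_v(\omega)=\rho_{f,s}(v)\cup\omega$, which holds at every $s\in S$ directly, so no passage from a dense open set is needed.
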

\begin{proof}
The question is analytically local. Let $X$ be the fiber of $f$ over
$s$ and $[s]$ be the image of $s$ in $\mathfrak M$. By the
Bogomolov-Tian-Todorov theorem the differential of the moduli map of
$f$ at $s$ is naturally identified with the Kodaira-Spencer map:
$$
\rho_{f,s}: \sT_{S,s}\to \rm{H}^{1}(X,\sT_{X}).
$$
Let $v\in \sT_{S,s}$ be a nonzero tangent vector and $\omega$ be a
generator of $\rm{H}^0(X,K_X)$. By Griffiths we have the formula for
the Higgs field action:
$$
\theta_v(\omega)=\rho_{f,s}(v)\cup \omega,
$$
where the cup product induces isomorphism
$\rm{H}^{1}(X,\sT_{X})\otimes \rm{H}^{0}(X,K_{X})\simeq
\rm{H}^{1}(X,\Omega^{n-1}_{X})$ for the CY manifold $X$. So it is
clear that the kernel of $\rho_{f,s}$ is exactly the kernel of the
Higgs field $\theta$ at $s$. In particular the corollary follows
from Lemma \ref{key lemma}.
\end{proof}

The main result in \cite{SZ} identifies the characteristic
subvarieties of the canonical PVHS over an irreducible HSD with the
characteristic bundles introduced by N. Mok in \cite{Mok}.
\begin{theorem}[Theorem 3 in \cite{SZ}]\label{identification}
Let $D_0=G_0/K_0$ be an irreducible HSD of rank $n$, $\Gamma_0$ be a
torsion free discrete subgroup of $G_0$ and let $(E,\theta)$ be the
system of Hodge bundles associated to the canonical PVHS over
$T:=\Gamma_0 \backslash D_0$. Then for each $k$ with $1\leq k\leq
n-1$ the $k$-th characteristic subvariety $\sC_{k}$ of $(E,\theta)$
over $T$ coincides with the $k$-th characteristic bundle $\sS_{k}$
over $T$.
\end{theorem}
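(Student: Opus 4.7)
The plan is to reduce the theorem to a fibrewise identification and then use the explicit Lie-theoretic description of the canonical PVHS to match Mok's geometric rank stratification with the scheme-theoretic kernel conditions that define $\sC_k$. Because both $\sC_k$ and $\sS_k$ descend from $G_0$-equivariant constructions on $D_0$ — the Higgs bundle of the canonical PVHS is homogeneous by construction, and $\sS_k$ is a $K_0$-homogeneous projective subvariety of $\P(\mathfrak{p}^+)$ transported around $D_0$ by $G_0$ — it suffices to identify the two fibres over a single base point $s_0\in D_0$. Fix $s_0=eK_0$ and use the Harish-Chandra decomposition $\mathfrak{g}_{\C}=\mathfrak{p}^-\oplus \mathfrak{k}_{\C}\oplus \mathfrak{p}^+$ to identify $\sT_{T,s_0}$ with $\mathfrak{p}^+$ as a complex $K_0$-module.

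Next I would describe the canonical PVHS explicitly. Following \S1.2 of \cite{D1}, the representation $W=\rho_{can}$ is the irreducible $\mathfrak{g}_{\C}$-module whose highest weight is dual to the marked node, and its Hodge grading is given by the eigenspace decomposition of a central grading element $H\in\mathfrak{k}_{\C}$, with eigenvalues $n,n-1,\dots,1,0$ where $n=\mathrm{rk}(D_0)$. The Higgs field is then identified with the restriction to $\mathfrak{p}^+$ of the Lie algebra action on $W$, which lowers the $H$-eigenvalue by one. Since the PVHS is of CY type, $E^{n,0}=\C\cdot w_0$ is spanned by the highest weight vector, and unravelling the definitions one checks $\theta^{k+1}(v^{k+1})(w_0)=v\cdot v\cdots v\cdot w_0$ (product in the universal enveloping algebra acting on $W$). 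By Lemma \ref{key lemma} it follows that
$$
(\sC_k)^{red}_{s_0}\,=\,\bigl\{\,[v]\in \P(\mathfrak{p}^+)\;:\;v^{k+1}\cdot w_0=0\text{ in }W\,\bigr\}.
$$

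To match this with Mok's characteristic bundle, recall the Harish-Chandra polar decomposition: every $v\in \mathfrak{p}^+$ can be written as $v=\sum_{i=1}^{r(v)} t_i e_{\gamma_i}$ with $\{\gamma_i\}$ a set of strongly orthogonal non-compact positive roots and $t_i\neq 0$, and $\sS_k$ is by definition the Zariski closure of $\{[v]:r(v)\le k\}$. Because strongly orthogonal root vectors commute pairwise, and because the structure of the canonical representation $W$ forces $e_{\gamma}^{2}\cdot w_0=0$ for each strongly orthogonal non-compact positive root, multinomial expansion gives
$$
v^{k+1}\cdot w_0\,=\,(k+1)!\sum_{i_1<\dots <i_{k+1}} t_{i_1}\cdots t_{i_{k+1}}\, e_{\gamma_{i_1}}\cdots e_{\gamma_{i_{k+1}}}\cdot w_0.
$$
A weight-counting argument in $W$ then shows that the vectors $e_{\gamma_{i_1}}\cdots e_{\gamma_{i_{k+1}}}\cdot w_0$ indexed by $(k+1)$-subsets are linearly independent, so the expression vanishes precisely when $r(v)\le k$. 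This gives the set-theoretic equality of fibres, and agreement of scheme structures on the homogeneous space $D_0$ then yields $\sC_k=\sS_k$ as subschemes of $\P(\sT_T)$.

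The main obstacle is the last combinatorial input: in the canonical representation the highest weight vector must be annihilated by $e_\gamma^2$ for every strongly orthogonal non-compact positive root, and the surviving square-free monomials $e_{\gamma_{i_1}}\cdots e_{\gamma_{i_{k+1}}}\cdot w_0$ must be linearly independent in $W$. For tube-type domains this is essentially the content of Gross's analysis in \cite{Gross} together with the Jordan-theoretic structure of the representation, while for non-tube type a more delicate case-by-case inspection along the lines of Mok's polysphere theorem is required. Once these two facts are in place, the remainder of the identification is routine book-keeping with the homogeneous bundles.
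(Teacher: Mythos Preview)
The paper does not contain a proof of this statement: it is quoted verbatim as ``Theorem 3 in \cite{SZ}'' and used as a black box, with no argument given in the present article. So there is no in-paper proof against which to measure your proposal.

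That said, your outline is a plausible strategy for what the \cite{SZ} argument should look like, and the reduction to a single fibre via $G_0$-equivariance, together with the identification of the Higgs field with the $\mathfrak{p}^+$-action on the highest weight vector, is the natural starting point. You are honest about where the actual content lies: the two representation-theoretic facts that $e_{\gamma}^{2}\cdot w_0=0$ for each strongly orthogonal non-compact positive root and that the surviving square-free monomials are linearly independent in $W$. These are not automatic and constitute the real work; your sketch defers them to a case analysis rather than proving them. There is also a loose end you should be aware of: Lemma~\ref{key lemma} only identifies the \emph{reduced} fibre $(\sC_k)^{red}_{s_0}$, whereas $\sC_k$ is defined as a Proj and may a priori carry nilpotents, so passing from the set-theoretic equality you establish to the asserted identification of subschemes requires either showing $\sC_k$ is reduced or interpreting ``coincides'' set-theoretically. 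If you intend to supply a self-contained proof rather than cite \cite{SZ}, both points need to be addressed.
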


The characteristic bundles are first defined over $D_0$ and
invariant under the $G_0$-action. By taking the quotient under
the group $\Gamma$ one obtains the characteristic bundles on $T$.
For each irreducible HSD the characteristic bundles are explicitly
described in Appendix (III.3) in \cite{M1}. For the purpose of this
article we need only the information of the first characteristic
bundle for the HSDs $D^{I}_{3,3}, \ D^{III}_{3},\ D^{IV}_8$. For the
convenience of the reader, we state them explicitly here. The
following proposition is direct consequence of Theorem
\ref{identification}.

\begin{proposition}\label{irreducible case}
Notations as Theorem \ref{identification} and $t \in T$ arbitrary
point. Then $(\sC_1)_t$ as subvariety of $\P(\sT_{T,t})$ is
isomorphic to
\begin{itemize}
  \item [(i).] $\mathbb{P}^2 \times \mathbb{P}^2\hookrightarrow \P^8$ with
  the Segre embedding, when $D_0=D^{I}_{3,3}$,
  \item [(ii).] $\mathbb{P}^2 \hookrightarrow \P^5$ with
  the Veronese embedding, when $D_0=D^{III}_{3}$,
  \item [(iii).] $Q \hookrightarrow \P^7$ with
  $Q$ a smooth quadratic hypersurface, when $D_0=D^{IV}_{8}$.
\end{itemize}
\end{proposition}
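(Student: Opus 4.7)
The plan is to invoke Theorem \ref{identification} to reduce the statement to the explicit computation of Mok's first characteristic bundle $\sS_1$ for the three specific irreducible HSDs at hand. Because the canonical PVHS is homogeneous, both $\sS_1$ and $\sC_1$ are $G_0$-invariant subvarieties of $\P(\sT_{T})$, so it is enough to work at the base point $o = eK_0 \in T$. At this point the holomorphic tangent space is canonically identified with the noncompact part $\mathfrak{p}^+$ of the Harish-Chandra decomposition $\mathfrak{g}_\C = \mathfrak{k}_\C \oplus \mathfrak{p}^+ \oplus \mathfrak{p}^-$, and Mok's description in Appendix (III.3) of \cite{M1} realizes $(\sS_1)_o \subset \P(\mathfrak{p}^+)$ as the projectivization of the $K_0^{\C}$-orbit through a highest weight vector in $\mathfrak{p}^+$ (the so-called variety of minimal rational tangents).

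Next I would carry out the three cases by a direct inspection of the $K_0^{\C}$-representation $\mathfrak{p}^+$. For $D_0 = D^{I}_{3,3}$, one has $\mathfrak{p}^+ \cong M_{3 \times 3}(\C)$ with $K_0^{\C} \cong S(GL_3 \times GL_3)$ acting by $(g,h) \cdot A = gAh^{-1}$; the highest weight orbit is the cone of rank-one matrices, whose projectivization is the Segre embedding $\P^2 \times \P^2 \hookrightarrow \P^8$. For $D_0 = D^{III}_3$, one has $\mathfrak{p}^+ \cong \Sym^2(\C^3)$ with $K_0^{\C} \cong GL_3$ acting by $g \cdot S = g S g^t$; the highest weight orbit is again the rank-one locus, now inside $\Sym^2(\C^3)$, and its projectivization is the second Veronese $\P^2 \hookrightarrow \P^5$. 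For $D_0 = D^{IV}_8$, one has $\mathfrak{p}^+ \cong \C^8$ carrying the standard quadratic form $q$ preserved by $K_0^{\C} \cong SO(2,\C) \times SO(8,\C)$ up to a scalar; the highest weight orbit is the cone of nonzero null vectors, projectivizing to the smooth quadric $Q = \{q = 0\} \hookrightarrow \P^7$.

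The bulk of the work is really bookkeeping: identifying the fundamental representation $W$ attached to the special node singled out in \S3, reading off the action of $K_0^{\C}$ on $\mathfrak{p}^+$, and matching it with a familiar minuscule/cominuscule embedding. I do not expect any genuine obstacle here; the only place where care is needed is to verify that Mok's first characteristic bundle $\sS_1$ agrees with the first characteristic subvariety $\sC_1$ coming from the iterated Higgs field on the canonical PVHS, but this is precisely the content of Theorem \ref{identification} that we are allowed to invoke. Once these identifications are in place, each of the three descriptions in (i), (ii), (iii) drops out and the proposition follows.
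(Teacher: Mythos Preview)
Your proposal is correct and follows exactly the route the paper indicates: the paper does not give a separate proof but states that the proposition is a direct consequence of Theorem \ref{identification} together with the explicit descriptions of Mok's characteristic bundles in Appendix (III.3) of \cite{M1}. Your write-up simply unpacks that citation, identifying $(\sS_1)_o$ with the projectivized highest-weight $K_0^{\C}$-orbit in $\mathfrak{p}^+$ and reading off the Segre, Veronese, and quadric models in the three cases, which is precisely what the reference provides.
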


For the application we also need to work out the first
characteristic subvariety in the reducible case
$D_0=D^{I}_{1,1}\times D^{IV}_{8}$.
\begin{proposition}\label{reducible case}
Notations as last proposition. Let $D_0$ be the HSD
$D^{I}_{1,1}\times D^{IV}_{8}$. Then for each point of $t \in T$,
$(\sC_1)_t$ as subvariety of $\P(\sT_{T,t})$ is isomorphic to a
disjoint union of a point $P$ with a smooth quadratic hypersurface
$Q$ in a hyperplane of $\P^8$ away from $P$. In particular, it is
not equidimensional.
\end{proposition}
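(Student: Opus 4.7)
The plan is to exploit the external tensor product structure of the canonical PVHS under $D_0=D^{I}_{1,1}\times D^{IV}_{8}$. Let $T_i=\Gamma_i\backslash D_i$ for $i=1,2$, so that $T=T_1\times T_2$ (up to finite quotients). The canonical PVHS on $T_1$ has weight $1$ with Hodge bundles $V_1 = V^{1,0}_1 \oplus V^{0,1}_1$ of ranks $(1,1)$; on $T_2$ it has weight $2$ with Hodge bundles $V_2 = V^{2,0}_2 \oplus V^{1,1}_2 \oplus V^{0,2}_2$ of ranks $(1,8,1)$. Their external tensor product is the weight $3$ canonical PVHS with Hodge numbers $(1,9,9,1)$, whose Hodge line $E^{3,0}=V^{1,0}_1\otimes V^{2,0}_2$ is spanned by $\omega=\omega_1\otimes\omega_2$.

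Fix a point $t=(t_1,t_2)\in T$ and decompose $\sT_{T,t}=\sT_{T_1,t_1}\oplus \sT_{T_2,t_2}$, of dimensions $1$ and $8$. For $v=v_1+v_2$ the Higgs field acts on the associated graded as $\theta_v = \theta_{1,v_1}\otimes \id + \id\otimes \theta_{2,v_2}$. Crucially, $V_1$ has only two Hodge pieces, so $\theta_{1,v_1}^2=0$ identically; a short direct calculation then gives
\[
\theta^{2}_{v^2}(\omega) \;=\; 2\,\theta_{1,v_1}(\omega_1)\otimes \theta_{2,v_2}(\omega_2) \;+\; \omega_1\otimes \theta^{2}_{2,v_2^2}(\omega_2),
\]
with the two summands living in the two direct summands $V^{0,1}_1\otimes V^{1,1}_2$ and $V^{1,0}_1\otimes V^{0,2}_2$ of $E^{1,2}$. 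By Lemma \ref{key lemma}, $[v]\in(\sC_1)_t^{\mathrm{red}}$ iff both summands vanish.

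For each canonical variation the map $\sT_{T_i,t_i}\to\Hom(V^{p,q}_i,V^{p-1,q+1}_i)$ is an isomorphism by homogeneity and a dimension count, so $\theta_{1,v_1}(\omega_1)=0\Leftrightarrow v_1=0$ and $\theta_{2,v_2}(\omega_2)=0\Leftrightarrow v_2=0$. The locus therefore splits into two pieces. When $v_2=0$, the second summand vanishes automatically, giving the single point $P=\P(\sT_{T_1,t_1})\in\P^8$. When $v_1=0$, the second summand vanishes iff $v_2^2$ is annihilated by the Yukawa coupling of the canonical PVHS on $D^{IV}_8$; by Proposition \ref{irreducible case}(iii) this cuts out a smooth quadric hypersurface $Q$ in $\P(\sT_{T_2,t_2})\cong \P^7$. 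Since $\sT_{T_1,t_1}$ and $\sT_{T_2,t_2}$ are complementary, the point $P$ does not lie on the hyperplane containing $Q$, so $(\sC_1)_t=P\sqcup Q$ as claimed, and this is visibly not equidimensional ($\dim P=0$, $\dim Q=6$).

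I do not anticipate a genuine obstacle: once the tensor product description of the Higgs field is in place, the entire argument is bookkeeping in Hodge bidegrees, with the quadric piece imported directly from Proposition \ref{irreducible case}(iii) and the point piece arising from the fact that a weight-one factor truncates the iterated Higgs field after a single application. The only point requiring minor care is the combinatorial factor $2$ arising from $(v_1+v_2)^2$ in $\Sym^2\sT_T$, but this of course does not affect the zero locus.
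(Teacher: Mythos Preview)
Your argument is correct and follows essentially the same route as the paper: both exploit the tensor product structure of the canonical PVHS, compute $(\theta_v)^2$ on the top Hodge line via the formula $2\,\theta_{1,v_1}(\omega_1)\otimes\theta_{2,v_2}(\omega_2)+\omega_1\otimes(\theta_{2,v_2})^2(\omega_2)$, and then invoke Lemma~\ref{key lemma} and Proposition~\ref{irreducible case}(iii) to identify the two components. Your explicit remark that the two summands lie in distinct direct summands $V^{0,1}_1\otimes V^{1,1}_2$ and $V^{1,0}_1\otimes V^{0,2}_2$ of $E^{1,2}$ (hence must vanish separately) makes a step transparent that the paper leaves implicit.
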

\begin{proof}
We put $D_1=D^{I}_{1,1}$ and $D_2=D^{IV}_{8}$ and fix the base point
$0\in D_0$. Then the canonical PVHS $\W$ over $D_0$ is given by
tensor product $\W_1\otimes \W_2$ of the canonical PVHS $\W_i$ over
$D_i,i=1,2$. Let $(E_i,\theta_i)$ be the corresponding Higgs-bundle
to $\W_i$. Then the corresponding Higgs-bundle $(E,\theta)$ to $\W$
is given by $(E_1\otimes E_2, \theta_1\otimes id+id\otimes
\theta_2)$. Let $p_i: D_0\to D_i,\ i=1,2$ be the natural projection.
Then one has natural isomorphism $\sT_{D_0}\simeq
p_1^*(\sT_{D_1})\oplus p_2^*(\sT_{D_2})$. Under this isomorphism, we
represent a tangent vector $v \in \sT_{D_0,0}$ by a pair $(v_1,v_2)$
with $v_i\in \sT_{D_i,0}$. Take a nonzero vector $e=e_1\otimes
e_2\in (E^{1,0}_1\otimes E^{2,0}_2)_{0}$. Then by Lemma \ref{key
lemma}, the fiber over $0$ of the first characteristic subvariety of
$D_0$ is determined by
$$
\{[v]\in \P(\sT_{D_0,0})|(\theta_{v})^2(e)=0\}.
$$
A simple calculation shows that
$$
(\theta_{v})^2(e)=2((\theta_1)_{v_1}(e_1)\otimes
(\theta_2)_{v_2}(e_2))+e_1\otimes ((\theta_2)_{v_2})^2(e_2).
$$
Hence $v$ is a characteristic vector if and only if $v_2=0$ or,
$v_1=0$ and $v_2$ is a characteristic vector in $\sT_{D_2,0}$. Thus
the fiber over 0 of the first characteristic subvariety is the
disjoint union of a point with the fiber over 0 of the first
characteristic subvariety of $D_2$, which is isomorphic to a smooth
quadratic hypersurface in $\P^7$ by Proposition \ref{irreducible case} (iii).
\end{proof}

\begin{corollary}\label{CharSubMainCorollary}
If $\V$ factors canonically, then for any $s\in S$ away from the
ramification locus of the moduli map $S\to \mathfrak{M}_{CY}$, then
$(\sC_1)_s \subset \P(\sT_{S,s})$ is isomorphic to
 $\P^2 \times \P^2\subset \P^8$ or the $P\cup Q\subset \P^8$ as in
Proposition \ref{reducible case}.
\end{corollary}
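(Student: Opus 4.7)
The plan is to combine the two-case classification of Proposition \ref{ModularTypeA} with the naturality of the Higgs field under the factoring map, and then invoke the explicit computations from Propositions \ref{irreducible case} and \ref{reducible case}.

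First I would apply Proposition \ref{ModularTypeA} to restrict $(D_0,\rho_{can})$ to either $(D^{I}_{3,3},\wedge^3)$ or $(D^{I}_{1,1}\times D^{IV}_8,\mathrm{id}\otimes\mathrm{id})$. Crucially, in both cases $\dim D_0 = 9 = \dim S$. Using Lemma \ref{modularity on the level of monodromy rep}, canonical factoring produces an isomorphism of $\C$-PVHS $\V \cong j^*\F$, where $\F$ is the canonical PVHS on $T := \Gamma_0\backslash D_0$, so in particular the Higgs bundles and iterated Higgs fields match under pullback:
$$
\theta_\V^k \;=\; (j^*\theta_\F^k)\circ \Sym^k(dj),\qquad k\geq 1.
$$
By Lemma \ref{key lemma} this gives the identification $(\sC_1)_s^{red} = dj_s^{-1}\bigl((\sC_1^\F)_{j(s)}^{red}\bigr)$ inside $\P(\sT_{S,s})$, where $dj_s:\sT_{S,s}\to\sT_{T,j(s)}$ is the differential of $j$.

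The crucial step is to show that $dj_s$ is an isomorphism whenever $s$ lies outside the ramification locus of the moduli map $S\to\mathfrak{M}_{CY}$. By the Bogomolov-Tian-Todorov argument already employed in Corollary \ref{char variety detects the ramification locus }, $\theta_{\V,s}^{n,0}$ is an isomorphism at such an $s$. Since $\F$ is a canonical PVHS and therefore of CY type (cf. \S1 of \cite{SZ}) and $T$ is homogeneous under $G_0$, the analogous map $\theta_{\F,j(s)}^{n,0}$ is an isomorphism as well. The factorization $\theta_{\V,s}^{n,0} = \theta_{\F,j(s)}^{n,0}\circ dj_s$ then forces $dj_s$ to be injective, and the dimension equality $\dim S = 9 = \dim D_0$ promotes this to a linear isomorphism.

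Granting this, $(\sC_1)_s \cong (\sC_1^\F)_{j(s)}$ as subvarieties of $\P(\sT_{S,s})\cong \P^8$. Theorem \ref{identification} identifies the latter with Mok's first characteristic bundle $\sS_1$, and Proposition \ref{irreducible case}(i) computes it as the Segre $\P^2\times\P^2\subset \P^8$ in the first case, while Proposition \ref{reducible case} computes it as the disjoint union $P\cup Q\subset \P^8$ in the second. The main point to verify carefully is the naturality of the iterated Higgs field under the isomorphism $\V\cong j^*\F$: this requires that the isomorphism, besides identifying the underlying flat bundles, also identifies the Hodge filtrations and Griffiths-transverse connections, but this is automatic because the isomorphism of Lemma \ref{modularity on the level of monodromy rep} is one of $\C$-PVHS and the canonical PVHS on $T$ is constructed as a locally homogeneous variation (see \S4 of \cite{Zuc}).
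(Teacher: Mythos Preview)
Your proof is correct and follows the same route as the paper, which simply cites Proposition \ref{ModularTypeA} together with Propositions \ref{irreducible case} and \ref{reducible case}; you have supplied the connecting argument that the paper leaves implicit, namely that $dj_s$ is a linear isomorphism away from the ramification locus (via the factorization $\theta_{\V,s}^{n,0}=\theta_{\F,j(s)}^{n,0}\circ dj_s$ and the equal dimensions $\dim S=\dim D_0=9$). One small wording point: Theorem \ref{identification} is stated only for irreducible $D_0$, so in the reducible case $D^{I}_{1,1}\times D^{IV}_8$ you should invoke Proposition \ref{reducible case} directly rather than via Theorem \ref{identification}; since you already do cite Proposition \ref{reducible case} there, this is just a matter of phrasing.
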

\begin{proof} This is a consequence of Proposition
\ref{ModularTypeA} and Propositions \ref{irreducible case},
\ref{reducible case}.
\end{proof}

\section{Explicit Infinitesimal Variation of Hodge Structures of the Calabi-Yau Threefolds}
\label{ExplicitHodge}

In those cases where there is an explicit description of the
cohomology with the help of an jacobian ring, it is usually also
possible to construct the infinitesimal invariants of the
corresponding variation. In particular, it will be possible to
compute the characteristic subvarieties. We will illustrate this for
the local system $\V$ for a good family of double octics $f: \sX \to
S$ with fibres $\widetilde X$ covered by $Y$.

\subsection{Jacobian Rings}

As $Y$ is a complete intersection, one can find a description of the
cohomology in terms of a certain Jacobian ring with the aid of the
{\em Cayley-trick}. Let $$R_{8,4} = \C[x_1,...,x_8,y_1,...,y_4]$$
denote the polynomial ring over $\C$ in $12$ variables. Let the four
quadrics defining $Y$ are given by
$$
f_i \quad = \quad \sum_{j=1}^8 b_{ij} x_j^2 \quad,
$$
and define $$F = \sum_{i=1}^4 y_i f_i\in R_{8,4}.$$ Let
$\mathfrak{J}_{\mathfrak{A}}$ denote the homogenous ideal generated
by the twelve partial derivatives $\frac{\partial F}{\partial x_j}$
and $\frac{\partial F}{\partial y_i}$, and put
$$
R_{Y} =R_{8,4} / \mathfrak{J}_{\mathfrak{A}}.$$ There is a natural
bigrading on $R_{8,4}$ which assigns the value $(0,1)$ to each of
the variables $x_i$ and $(1,-2)$ to each $y_i$. It induces a
bigrading on $R_{Y}$. For $0 \leq p \leq 3$ we let $R_{Y}^{(p)}$
denote the subspace generated by monomials of bidegree $(p,0)$, that
is, monomials whose total degree in the $x_j$ is $2p$ and whose
total degree in the $y_i$ is $p$.\\

\begin{proposition}\label{CompThm1}
There is an isomorphism $$\rm{H}^3(Y,\C) \cong R_{Y}$$ of
$\C$-vector spaces which identifies
$$\rm{H}^{3-p,p}(Y) \cong R_{Y}^{(p)},\;\;\;0 \leq p \leq 3$$
\end{proposition}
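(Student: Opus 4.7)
The plan is to reduce the result to a Griffiths-type residue calculation on a single hypersurface by means of the Cayley trick. Since $\mathfrak A$ is in general position, $Y\subset\P^7$ is a smooth complete intersection of four quadrics (as recalled in \S2), and $H^3(\P^7,\C)=0$ implies $H^3(Y,\C)=H^3_{\mathrm{prim}}(Y,\C)$; it thus suffices to build a bigraded isomorphism $H^3_{\mathrm{prim}}(Y,\C)\cong R_Y$ which carries the Hodge filtration to the natural bigrading. For this I would pass from $Y$ to the auxiliary hypersurface
$$\tilde Y \;=\; \{F=0\}\;\subset\; E\;:=\;\P^7\times\P^3,$$
of bidegree $(2,1)$; the smoothness of $\tilde Y$ is in fact equivalent to the genericity of $\mathfrak A$. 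The Cayley trick (in the form worked out by Konno, Terasoma and Nagel) then yields an isomorphism of polarized Hodge structures
$$H^{3-p,p}_{\mathrm{prim}}(Y)\;\cong\;H^{6-p,\,p+3}_{\mathrm{prim}}(\tilde Y),$$
shifting the Hodge type by $(r-1,r-1)=(3,3)$ to account for the number $r=4$ of defining equations.

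Next I would invoke Griffiths' residue theorem for smooth hypersurfaces in a product of projective spaces (a direct extension of the classical case by a Koszul-complex argument, or a special instance of Batyrev--Cox for toric ambient varieties). With $-K_E$ of bidegree $(8,4)$ and $\tilde Y$ of bidegree $(2,1)$, it identifies
$$H^{9-q,q}_{\mathrm{prim}}(\tilde Y)\;\cong\;\bigl(R_{8,4}/\mathfrak{J}_{\mathfrak A}\bigr)^{(q+1)(2,1)-(8,4)}\;=\;\bigl(R_{8,4}/\mathfrak{J}_{\mathfrak A}\bigr)^{(2q-6,\,q-3)}$$
in the \emph{standard} bigrading, where $x_j$ has bidegree $(1,0)$ and $y_i$ has bidegree $(0,1)$. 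Taking $q=p+3$ lands in standard bidegree $(2p,p)$, i.e.\ in the subspace spanned by monomials with $2p$ $x$'s and $p$ $y$'s. Under the peculiar bigrading adopted in the statement, in which $y_i$ is reassigned bidegree $(1,-2)$ so that $F$ becomes bihomogeneous of bidegree $(1,0)$, these are precisely the monomials making up $R_Y^{(p)}$, and the proof concludes.

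The main obstacle is not conceptual but organizational: pinning down the Hodge-type shift of the Cayley trick and tracking the two bigrading conventions so that the identifications of graded pieces match on the nose. As a by-product, the Cayley-trick construction also furnishes a concrete description of the Gauss--Manin connection by multiplication in $R_Y$, which is what the subsequent sections will need in order to compute the Higgs field and its characteristic subvarieties.
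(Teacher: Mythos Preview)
Your proposal is correct and is precisely the Cayley-trick plus Griffiths-residue argument underlying the references \cite{Nagel} and \cite{Te} that the paper cites in lieu of a proof. In other words, you have faithfully reconstructed the substance of the paper's approach; your bookkeeping of the Hodge shift $(r-1,r-1)=(3,3)$ and of the two bigrading conventions is accurate, and the resulting identification $H^{3-p,p}(Y)\cong R_Y^{(p)}$ matches the statement on the nose.
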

\begin{proof}
See \cite{Nagel}, Prop.\ 2.2.10 on page 40 or \cite{Te}.
\end{proof}

In \S \ref{Kummer} we defined the group $G_1$ of sign-changes and
its subgroup $N_1$ of index two. These groups naturally act on the
polynomial ring $R_{8,4}$ by sign-changes on the  $x_j,1 \leq j\leq
8$ and trivially on the $y_i, 1 \leq i\leq 4$. There is an induced
action of $N_1$ on the quotient $R_{Y}$. We let $\widetilde{R}_{Y}$
denote the subring $R_{Y}^{N_1}$ of elements in $R_{Y}$ fixed by
every $\sigma \in N_1$.

\begin{corollary}

It induces an isomorphism between the subspaces
$$
\rm{H}^3(\widetilde{X},\C) \quad \cong \quad \widetilde{R}_{Y}
$$
which is also compatible with the Hodge decomposition and the total
grading.
\end{corollary}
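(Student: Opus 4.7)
The plan is to obtain the corollary by combining Proposition \ref{Prop34} with Proposition \ref{CompThm1}, after verifying that the Jacobian-ring isomorphism of Proposition \ref{CompThm1} is equivariant for the action of $N_1$ (indeed of the full group $G_1$). Once that equivariance is in hand, the proof reduces to taking $N_1$-invariants on both sides.

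Step 1. Tensoring the isomorphism of Proposition \ref{Prop34} with $\C$ gives a natural isomorphism of $\C$-Hodge structures
$$
\rm{H}^3(\widetilde{X},\C) \cong \rm{H}^3(Y,\C)^{N_1}.
$$
So it is enough to identify the right-hand side with $\widetilde{R}_Y = R_Y^{N_1}$ compatibly with the Hodge and total gradings.

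Step 2. I would check that the isomorphism $\rm{H}^3(Y,\C) \cong R_Y$ of Proposition \ref{CompThm1} is $G_1$-equivariant, where $G_1$ acts on $\rm{H}^3(Y,\C)$ via its geometric action on $Y$ (the restriction of $\sigma_a$ to $Y$) and on $R_Y$ by sign changes on the variables $x_j$. The key observation is that each $\sigma_a \in G_1$ preserves each defining quadric $f_i = \sum_j b_{ij} x_j^2$, because only the squares of the $x_j$ appear. Extending $\sigma_a$ to act trivially on the $y_i$-coordinates, one gets an automorphism of $\P^7 \times \P^3$ fixing the zero locus of the Cayley polynomial $F = \sum_i y_i f_i$ and fixing the ideal $\mathfrak{J}_{\mathfrak{A}}$. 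The Griffiths-type residue description of $\rm{H}^3(Y,\C)$ in terms of $R_Y$ used in Proposition \ref{CompThm1} (see \cite{Nagel} or \cite{Te}) is functorial with respect to such automorphisms, which gives the desired equivariance.

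Step 3. Passing to $N_1$-invariants on both sides of the equivariant isomorphism of Step 2 yields
$$
\rm{H}^3(Y,\C)^{N_1} \cong R_Y^{N_1} = \widetilde{R}_Y,
$$
and composing with the isomorphism of Step 1 gives the asserted identification $\rm{H}^3(\widetilde{X},\C) \cong \widetilde{R}_Y$. Compatibility with the Hodge decomposition is inherited from Proposition \ref{CompThm1}, since taking $N_1$-invariants commutes with the Hodge decomposition (the $N_1$-action comes from automorphisms of the complex variety $Y$ and hence preserves each Hodge piece $\rm{H}^{3-p,p}(Y)$). Compatibility with the total grading is automatic, as the sign-change action of $N_1$ fixes each bihomogeneous piece $R_Y^{(p)}$ setwise.

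The only nonroutine point is the equivariance in Step 2. It is, however, essentially formal from the construction of the isomorphism in Proposition \ref{CompThm1}: the Cayley trick realizes $F$ as a homogeneous element of $R_{8,4}$ which is $G_1$-invariant by construction, so its Jacobian ideal and the residue map producing the isomorphism $\rm{H}^3(Y,\C) \cong R_Y$ are both $G_1$-equivariant. No further calculation is needed.
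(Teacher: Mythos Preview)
Your proposal is correct and follows essentially the same approach as the paper: combine Proposition \ref{Prop34} with Proposition \ref{CompThm1} and pass to $N_1$-invariants. The paper's proof is terser, simply noting that $\rm{F}^{p}\rm{H}^3(\widetilde X,\C)=(\rm{F}^{p}\rm{H}^3(Y,\C))^{N_1}$ and then declaring that the assertion follows, whereas you make explicit the $G_1$-equivariance of the Jacobian-ring isomorphism that the paper leaves implicit.
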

\begin{proof}
By Proposition 3.4, $\rm{H}^3(\widetilde X,\Q)$ is the subspace of
$\rm{H}^3(Y,\Q)$ invariant under $N_1$-action and moreover is a sub
PHS of $\rm{H}^3(Y,\Q)$. So one has
$$
\rm{F}^{p}\rm{H}^3(\widetilde X,\C)=\rm{F}^{p}\rm{H}^3(Y,\C)\cap
\rm{H}^3(\widetilde X,\C)=(\rm{F}^{p}\rm{H}^3(Y,\C))^{N_1}.
$$
So the assertion follows.
\end{proof}

\subsection{Multiplication and Cup-Product}
\begin{proposition}\label{PropGM}
For $0 \leq p \leq 2$ the following diagram commutes
$$
\begin{array}{ccc}
\sT_{S,s} \otimes \rm{H}^{3-p,p}(\widetilde{X}) &
\stackrel{\theta_s}{\longrightarrow}
& \rm{H}^{2-p,p+1}(\widetilde{X}) \\[0.2cm]
{\scriptstyle\cong} \downarrow & & \downarrow {\scriptstyle\cong}
\\[0.2cm]
\widetilde{R}^{(1)}_{Y} \otimes \widetilde{R}^{(p)}_{Y}
 & \stackrel{\mu}{\longrightarrow} & \widetilde{R}^{(p+1)}_{Y}.
\end{array}
$$
Here the vertical arrows are provided by Proposition \ref{CompThm1}
and the above isomorphism, and the lower horizontal arrow is the
ring multiplication map.
\end{proposition}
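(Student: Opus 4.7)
The plan is to deduce the assertion from the classical Griffiths formalism applied to the hypersurface $F=0$ in $\mathbb{P}^7\times\mathbb{P}^3$ produced by the Cayley trick, then pass to $N_1$-invariants. By Lemma \ref{infinitesimal deformation of CY} and Corollary \ref{etale of moduli spaces}, the Kodaira--Spencer map $\sT_{S,s}\to H^1(\widetilde X,\sT_{\widetilde X})$ of a good family is an isomorphism, so I may work with the universal infinitesimal deformation of $\widetilde X$. Via the double-cover relation $\widetilde X\leftarrow Y$ and Proposition \ref{Prop34}, such deformations correspond to deformations of the coefficient matrix $B=(b_{ij})$, equivalently to deformations of the bi-homogeneous polynomial $F=\sum_i y_i f_i\in R_{8,4}$. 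The first step of the proof is therefore to identify the left vertical arrow explicitly: a tangent vector $v\in\sT_{S,s}$ maps to the class $[\dot F_v]\in R_Y^{(1)}$ of the derivative of $F$ along $v$, and since $N_1$ acts trivially on $y_i$ and preserves each $x_j^2$, the polynomial $\dot F_v$ lies automatically in $\widetilde R_Y^{(1)}$.

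Next, I would invoke the Cayley/Griffiths correspondence for the complete intersection $Y\subset\mathbb{P}^7$ as formulated in \cite{Nagel} (Prop.\ 2.2.10) and in \cite{Te}: $H^3(Y,\C)$ is computed as the bigraded Jacobian ring $R_Y=R_{8,4}/\mathfrak{J}_{\mathfrak{A}}$, and the Gauss--Manin connection of the family $\{F_\lambda=0\}$ is given, up to a sign dictated by the residue formula, by multiplication in $R_Y$. Concretely, if $\omega\in H^{3-p,p}(Y)$ is represented by a class $[P]\in R_Y^{(p)}$ and $v\in\sT_{S,s}$ by $[\dot F_v]\in R_Y^{(1)}$, then the cup product
\[
\nabla_v\omega\ \text{mod}\ F^{3-p}\ =\ \rho_{f,s}(v)\cup\omega\ \in\ H^{2-p,p+1}(Y)
\]
is represented by the product $[\dot F_v]\cdot[P]\in R_Y^{(p+1)}$. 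This is precisely the statement of Proposition \ref{PropGM} at the level of $Y$, before restricting to the $N_1$-invariant subring.

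The final step is the passage to $\widetilde X$. Since the identification $H^3(\widetilde X,\C)\cong\widetilde R_Y$ of the preceding corollary is the restriction to $N_1$-invariants of the isomorphism $H^3(Y,\C)\cong R_Y$, since the Higgs field on $H^3(\widetilde X,\C)$ is the restriction of the Higgs field on $H^3(Y,\C)$ (both being the graded of the same Gauss--Manin connection, which commutes with the finite group action), and since the ring structure on $R_Y$ preserves the $N_1$-invariant subring $\widetilde R_Y$, the commutative diagram descends to the required one for $\widetilde X$.

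The main obstacle is not the invariant passage, which is formal, but rather the careful bookkeeping in the middle step: one has to match the bigrading $(p,0)$ used to define $R_Y^{(p)}$ with the Hodge filtration on the hypersurface $F=0$ in $\mathbb{P}^7\times\mathbb{P}^3$ so that the Griffiths residue description and the compatibility of $\nabla$ with multiplication by $\dot F_v$ appear in the form stated. Once the dictionary of \cite{Nagel} is in place, however, commutativity of the square is exactly the classical statement that the cup product with the Kodaira--Spencer class corresponds to multiplication in the Jacobian ring.
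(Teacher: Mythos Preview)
Your proposal is correct and follows essentially the same route as the paper: both reduce to the known IVHS description for the complete intersection $Y$ (via Terasoma's Prop.\ 2.6 in \cite{Te} / Nagel), where the Higgs field is multiplication in the Jacobian ring $R_Y$, and then pass to $N_1$-invariants using that $\V\simeq(\W|_S)^{N_1}$ as PVHS. The only presentational difference is that the paper sets up explicit families $g_0,h_0,\tilde h_0,f_0$ and a commutative diagram of Kodaira--Spencer maps to verify that the image of $\sT_{S,s}\hookrightarrow\sT_{Z,s}\cong R_Y^{(1)}$ is exactly $\widetilde R_Y^{(1)}$, whereas you argue this directly from the form of $\dot F_v$.
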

\begin{proof}
First we recall that in \S3 we associate each element in $S$ a
smooth complete intersection of four quadrics $Y$ in $\P^7$. Let
$g_0: \sZ \to Z$ be a good family over the moduli space of smooth
complete intersection of four quadrics in $\P^7$. So one has the
natural closed embedding $S\hookrightarrow Z$. Put $$h_0=g_0|_{S}:
\sY=\sZ|_{g_{0}^{-1}(S)}\to S.$$ Globalizing the construction in
Proposition \ref{Prop34} to $h_0$, one obtains a new family $\tilde
h_0: \widetilde \sY\to S$ admitting $N_1$-action over $S$, and by
taking quotient of it under $N_1$-action one recovers the family
$f_0: \sX\to S$. In summary, one has the following commutative
diagrams
$$
\begin{xy}
   \xymatrix{
      \sX\ar[d]_{f_0}& \ar[l]_{\pi}\widetilde \sY \ar[d]^{\tilde h_0}  \ar[r]^{\sigma} &  \sY \ar[d]^{h_0=g_0|_{S}}\ar[r]^{\hookrightarrow} & \sZ \ar[d]^{g_0} \\
        S &\ar[l]^{=} S \ar[r]_{=}& S\ar[r]^{\hookrightarrow}  & Z.
  }
\end{xy}
$$
Now we let $\W=R^3g_{0*}\Q_{\sZ}$ and $(F,\eta)$ be the system of
Hodge bundles associated with $\W$. By construction, it is clear
that $\V\simeq (\W|_{S})^{N_1}$ as PVHS. It follows that for $v\in
\sT_{S,s}\subset \sT_{Z,s}$ one has natural identification
$$
\theta_s(v)(\alpha)=\eta_{s}(v)(\alpha),
$$
where $\alpha\in \rm H^{3-p,p}(\widetilde X)\simeq \rm H^{3-p,p}(Y)^{N_1} \subset \rm H^{3-p,p}(Y)$.\\

Furthermore, for the IVHS of $\W$ at $s$, one has the following
commutative diagram (see Proposition 2.6 in \cite{Te})
$$
\begin{array}{ccc}
\sT_{Z,s}   & \stackrel{\eta_s}{\longrightarrow}
& \bigoplus_{p}\Hom(\rm{H}^{3-p,p}(Y),\rm{H}^{2-p,p+1}(Y)) \\[0.2cm]
{\scriptstyle\cong} \downarrow & & \downarrow {\scriptstyle\cong}
\\[0.2cm]
R^{(1)}_{Y}
 & \stackrel{}{\longrightarrow} & \bigoplus_{p}\Hom(R^{(p)}_{Y},R^{(p+1)}_{Y}),
\end{array}
$$
where the lower horizontal arrow is induced by the ring multiplication map. \\

Finally one has the following commutative diagrams by the
construction of the families $h_0, \tilde h_0$ and $f_0$:
$$
\begin{xy}
   \xymatrix{
      \sT_{S,s}\ar[d]_{\cap}\ar[r]^{\rho_{f_0,s}}& \rm H^1(\widetilde X,\sT_{\widetilde X}) \ar[d]^{ }  \ar[r]^{\simeq} &  \rm H^{2,1}(\widetilde X)\ar[d]^{ }  \\ \sT_{Z,s} \ar[r]^{\rho_{g_0,s}}  &\rm H^1(Y,\sT_{Y})\ar[r]^{\simeq}& \rm H^{2,1}(Y).
  }
\end{xy}
$$
This shows that under the left vertical isomorphism $\sT_{Z,s}\to
R^{(1)}_{Y}$ (in the second paragraph), the image of $\sT_{S,s}$ is
exactly $\widetilde{R}^{(1)}_{Y}$. The proposition follows by
putting everything above together.
\end{proof}

\subsection{Calculation}
Now we proceed to describe the computation of the characteristic
subvarieties introduced in \S \ref{Plethysm}. According to
Proposition \ref{PropGM}, the action of Higgs field on the
cohomology classes along a given tangent vector is equivalent to the
multiplication of corresponding elements in $\widetilde{R}_{Y}$ with
some fixed element in $\widetilde{R}^{(1)}_{Y}$. Furthermore, since
$\widetilde{R}_{Y}^{(0)}$ is one-dimensional, there exists a
isomorphism $\Hom(\widetilde{R}_{Y}^{(0)},\widetilde{R}_{Y}^{(k)})
\cong \widetilde{R}_{Y}^{(k)}$. Hence in our case the $k$-th
iterated Higgs fields
$$
(\theta_s)^k : \Sym^k(T_{S,s}) \longrightarrow \Hom(
\rm{H}^{3,0}(\widetilde{X}), \rm{H}^{3-k,k}(\widetilde{X}) )
$$
is given by the multiplication map
$$
\mu_k : \Sym^k(\widetilde{R}^{(1)}_{Y}) \longrightarrow
\widetilde{R}^{(k)}_{Y}.
$$
It follows that the $k$-th characteristic subvariety at $s$ is
isomorphic to
$$
(\sC_k)_{s} \quad = \quad
\P(\Sym^\cdot((\widetilde{R}_{Y}^{(1)})^*/\mathfrak{a}_{k+1}) \quad,
$$
where $\mathfrak{a}_{k+1}$ denotes the image of the dual
$\mu_{k+1}^* : (\widetilde{R}^{(k+1)}_{Y})^* \rightarrow
\Sym^{k+1}((\widetilde{R}^{(1)}_{Y})^*)$ of the multiplication map.
Fixing a basis of $\widetilde{R}^{(1)}_{Y}$ determines an
isomorphism $\P(\Sym^\cdot((\widetilde{R}_{Y}^{(1)})^*)) \cong
\P^8$. This shows that the first characteristic subvariety
$(\sC_1)_{s}$ can be computed by the following steps. Let
$(u_1,...,u_9)$ and $(v_1,...,v_9)$ denote the elements of $\sB_1$
and $\sB_2$, respectively.

\begin{enumerate}
\item[(1)] Fix a bijection
$$
\phi : \{ (i,j) \in \N^2 \mid 1 \leq i \leq j \leq 9 \}
\stackrel{\sim}{\longrightarrow} \{ 1,...,45 \} \quad,
$$
and define a basis $\sB = (w_1,...,w_{45})$ of
$\Sym^2(\widetilde{R}_{Y}^{(1)})$ by $w_{\phi(i,j)} = u_i u_j$.
Compute the representation matrix $C \in M(9\times 45,\C)$ of the
multiplication map $\mu_2$ with respect to
$\sB$ and $\sB_2$.\\[-0.2cm]
\item[(2)] Let $\sB_1^* = (u_1^*,...,u_9^*)$ denote the dual basis of
$\sB_1$, and let the basis $\widetilde{\sB} = (\tilde{w}_1,
...,\tilde{w}_{45})$ of $\Sym^2((\widetilde{R}^{(1)}_{Y})^*)$ be
defined by $\tilde{w}_{\phi(i,j)} = u_i^* u_j^*$. Determine the
representation matrix $D = (d_{ij}) \in M(45\times 9,\C)$ of the
dualized multiplication $\mu_2^*$
with respect to $\sB_2^*$ and $\widetilde{\sB}$.\\[-0.2cm]
\item[(3)] For the $k$-th column of $D$ define a polynomial
$f_k \in \C[z_1,...,z_9]$ by
$$
f_k \quad = \quad \sum_{i=1}^9 \sum_{j=i}^9 d_{\phi(i,j)k} z_i z_j.
$$
Let $\mathfrak{a}_2$ be the ideal generated by $f_1,...,f_9$. Then
$(\sC_1)_{s}$ is isomorphic to the projective subvariety in $\P^8$
which corresponds to $\mathfrak{a}_2$.
\end{enumerate}

\begin{remark}
Step (1) can be carried out in practice using computer algebra. For
each pair $(i,j)$, one computes the product $g_1 g_2$ of the
polynomials $g_i,g_j \in R_{8,4}$ which correspond to the basis
vectors $w_i,w_j \in \sB_1$ and reduces it with respect to the
Jacobian ideal $\mathfrak{J}_{\mathfrak{A}}$. The result can be
expressed as a linear combination of the elements in $\sB_2$. For
step (2) notice that $\widetilde{\sB}$ is related to the dual basis
$\widetilde{\sB}^*$ of $\sB$ by
$$
\tilde{w}_{\phi(i,j)} \quad = \quad
\begin{cases} \tilde{w}_{\phi(i,j)}^* & \text{if} \quad i = j \\
\tfrac{1}{2}\tilde{w}_{\phi(i,j)}^* & \text{if} \quad i \neq j.
\end{cases}
$$
By a similar procedure, we can compute the characteristic subvariety
$(\sC_2)_{s}$. We skip the details here, because it will not be used
in the sequel.
\end{remark}

\subsection{A general point and a special point}
Here we summarize the results of the calculations that can be
performed by the method explained above. First, if we pick a
generic configuration $\mathfrak{A}$ giving a general point
$\eta_0 \in \mathfrak{M}_{CY}$ and determine the ideal
$$ \mathfrak{a}_2 \subset \C[z_1,\ldots,z_9] $$
consisting of $9$ quadratic polynomials. Using a Gr\"obner-basis
calculation one can verify the following (see \cite{computation} for
details):

\begin{proposition}\label{Char1Result generic}
The characteristic subvariety $(\sC_1)_{\eta_0}$ over a general
point $\eta_0$ in the moduli space $\mathfrak{M}_{CY}$ is empty set.
\end{proposition}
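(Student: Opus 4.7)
The plan is to reduce the statement to an explicit finite computation in commutative algebra that can be carried out symbolically and then propagate the result to a Zariski-open subset of $\mathfrak{M}_{CY}$ by semicontinuity. More concretely, by Lemma \ref{key lemma}, a nonzero tangent vector $v \in \sT_{S,s} \cong \widetilde{R}^{(1)}_Y$ gives a point of $(\sC_1)_s^{\text{red}}$ if and only if $v^2 = 0$ in $\widetilde{R}^{(2)}_Y$ (using Proposition \ref{PropGM} to identify the iterated Higgs field with the ring multiplication). So we must show that for a generic arrangement $\mathfrak{A}$, the multiplication map $\mu_2 : \Sym^2(\widetilde{R}_Y^{(1)}) \to \widetilde{R}_Y^{(2)}$ has the property that its associated ideal $\mathfrak{a}_2 \subset \C[z_1,\ldots,z_9]$ contains some power of the irrelevant ideal $(z_1,\ldots,z_9)$.

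First I would fix one explicit arrangement $\mathfrak{A}_0$ in general position by choosing specific rational (or small integer) entries for the four $*$-columns of the standard moduli matrix from \S 2, making sure all $4\times 4$ minors are nonzero. For this $\mathfrak{A}_0$ I would compute, via a Gr\"obner-basis engine, the Jacobian ideal $\mathfrak{J}_{\mathfrak{A}_0} \subset R_{8,4}$ of the polynomial $F = \sum_{i=1}^4 y_i f_i$, then extract an explicit monomial basis $\sB_1$ of the nine-dimensional space $\widetilde{R}^{(1)}_Y = (R_{Y})^{(1),N_1}$ and $\sB_2$ of $\widetilde{R}^{(2)}_Y$. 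The dimensions $9$ and $9$ are predicted by the Hodge numbers $(1,9,9,1)$ together with the $N_1$-invariance, and must be verified in the concrete example. Next I would execute the three steps outlined just before Proposition \ref{Char1Result generic}: assemble the $9 \times 45$ matrix $C$ of $\mu_2$, dualize to obtain the $45 \times 9$ matrix $D$, and read off nine quadratic forms $f_1,\ldots,f_9 \in \C[z_1,\ldots,z_9]$.

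The central assertion to verify is then that $V(f_1,\ldots,f_9) = \emptyset$ in $\P^8_{\C}$, equivalently $\sqrt{(f_1,\ldots,f_9)} \supset (z_1,\ldots,z_9)$. This is a finite Gr\"obner-basis check: compute a reduced Gr\"obner basis of $\mathfrak{a}_2$ and confirm that for each $i \in \{1,\ldots,9\}$ some power $z_i^{N}$ lies in $\mathfrak{a}_2$. Since $\mathfrak{a}_2$ is generated by nine quadrics in nine variables, on dimensional grounds one generically expects the zero locus in $\P^8$ to be empty, so the only question is whether the specific quadrics coming from our arrangement are sufficiently general; this is exactly what the symbolic computation settles (see \cite{computation}).

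Finally, to pass from one explicit $\mathfrak{A}_0$ to the generic point $\eta_0$ of $\mathfrak{M}_{CY}$, I would invoke upper semicontinuity of fiber dimension applied to the relative characteristic subvariety $\sC_1 \subset \P(\sT_{\mathfrak{M}_{CY}})$: emptiness of the fiber over one closed point forces emptiness on a Zariski-open neighborhood, hence on a dense open subset of $\mathfrak{M}_{CY}$, which is what ``general point'' means here. The main obstacle is really just the computational one: controlling the size of the matrices $C$ and $D$ and the Gr\"obner-basis calculation, and choosing $\mathfrak{A}_0$ with entries tame enough for the computer algebra system to handle exact arithmetic while still being generic in the moduli-theoretic sense. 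Everything else is bookkeeping.
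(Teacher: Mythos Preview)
Your proposal is correct and follows essentially the same route as the paper: pick one explicit arrangement, compute the nine quadrics generating $\mathfrak{a}_2$ via the Jacobian-ring description and the three-step recipe, and verify emptiness in $\P^8$ by a Gr\"obner-basis calculation (the paper refers to \cite{computation} for the actual computation). The only organizational difference is that you fold the semicontinuity step into the proof of the proposition itself, whereas the paper states the proposition for a single ``generic'' point and invokes semicontinuity later, inside the proof of Theorem~\ref{MainTheorem1}.
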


We will now chooses a special point $s_0 \in
\widetilde{\mathfrak{H}}_{CY}$ represented by the matrix $A \in
M(8\times 4,\C)$ with
$$
A^t \quad = \quad
\begin{pmatrix}
1 & 1 & 1 & 1 & 1 & 1 & 1 & 1\\
1 & 2 & 3 & 4 & 5 & 6 & 7 & 8\\
1 & 4 & 9 & 16 & 25 & 36 & 49 & 64 \\
1 & 8 & 27 & 64 & 125 & 216 & 343 & 512
\end{pmatrix}
$$

\begin{proposition}\label{Char1Result}
The characteristic subvariety $(\sC_1)_{s_0}^{red}$ consists of two
irreducible surfaces of degree $6$, each spanning the same
$\P^7 \subset \P^8$.
\end{proposition}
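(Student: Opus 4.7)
The plan is to apply the three-step procedure (1)--(3) from the preceding subsection to the explicit matrix $A$ at $s_0$, reducing the assertion to a symbolic calculation in the Jacobian ring $\widetilde R_Y$ associated with this particular arrangement. The statement concerns one fixed point in an explicitly parametrized moduli space, and, given the formalism already in place, the argument is necessarily computational.

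First I would construct a $4 \times 8$ matrix $B$ whose rows span $\ker A^{t}$, set $f_i = \sum_j b_{ij} x_j^2$ and $F = \sum_{i=1}^4 y_i f_i$, and form the Jacobian ideal $\mathfrak{J}_{\mathfrak A}$ generated by the twelve partials of $F$. Reducing modulo $\mathfrak{J}_{\mathfrak A}$ and taking $N_1$-invariants produces the graded pieces $\widetilde R_Y^{(1)}$ and $\widetilde R_Y^{(2)}$, each of dimension $9$ by Proposition \ref{CompThm1}; I would then fix monomial bases $\sB_1 = (u_1, \ldots, u_9)$ and $\sB_2 = (v_1, \ldots, v_9)$ of these spaces.

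Next, for each pair $(i,j)$ with $1 \leq i \leq j \leq 9$, I would reduce the product $u_i u_j$ modulo $\mathfrak{J}_{\mathfrak A}$ and read off its coordinates in $\sB_2$, thereby assembling the $9 \times 45$ matrix $C$ of the multiplication map $\mu_2 : \Sym^2(\widetilde R_Y^{(1)}) \to \widetilde R_Y^{(2)}$. Transposing and applying the base change of step (2) yields the $45 \times 9$ matrix $D$, and step (3) packages its columns into nine explicit quadratic forms $f_1, \ldots, f_9 \in \C[z_1, \ldots, z_9]$ whose common zero locus in $\P^8$ is $(\sC_1)_{s_0}$.

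The final step is to analyze the ideal $\mathfrak{a}_2 = (f_1, \ldots, f_9)$ by computer algebra (as in \cite{computation}). Concretely I would compute a Gr\"obner basis and the minimal primary decomposition of $\mathfrak{a}_2$, verify that there are exactly two minimal primes $\mathfrak{p}_1, \mathfrak{p}_2$, check irreducibility and that each has projective dimension two, read off from the Hilbert polynomial of $\C[z_1, \ldots, z_9]/\mathfrak{p}_i$ that $\deg V(\mathfrak{p}_i) = 6$, and finally exhibit a single (up to scalar) linear form $\ell$ lying in $\mathfrak{p}_1 \cap \mathfrak{p}_2$ with no further linear form in either $\mathfrak{p}_i$, so that both surfaces lie in, and span, the same hyperplane $V(\ell) \cong \P^7 \subset \P^8$. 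The main obstacle is the size of the symbolic computation---a Gr\"obner basis of nine quadrics in nine variables can be sizeable and the primary decomposition still more so---but the $N_1$-symmetry, together with the extra hyperelliptic symmetry at $s_0$ coming from the identification $H^3(\widetilde X) \cong \wedge^3 H^1(C)$, keeps the problem within reach of current software; the appearance of exactly two irreducible components of equal degree sharing a common linear span is a non-obvious feature of the hyperelliptic point whose only evident justification is the explicit calculation.
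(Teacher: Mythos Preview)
Your plan is correct in outline and coincides with the paper's approach up through the construction of the ideal $\mathfrak{a}_2 = (f_1,\ldots,f_9) \subset \C[z_1,\ldots,z_9]$: both you and the authors produce exactly these nine quadrics via the Jacobian-ring description of the IVHS and then hand the ideal to a computer algebra system. The divergence is in how the ideal is analyzed. You propose a head-on primary decomposition of $\mathfrak{a}_2$; the authors report that this was not directly feasible and instead proceed indirectly. They first read off $\dim = 2$ and $\deg = 12$ from the Hilbert series, then eliminate five of the variables to project to a degree~$8$ surface in $\P^3$, observe by reduction modulo various primes that this octic factors into two quartics precisely when $p \equiv 1 \pmod 4$, and from this deduce a factorization over $\Q(\sqrt{-1})$; the two quartic images are shown irreducible over $\C$ by examining their singular loci and generic plane sections, and lifting back gives the two degree~$6$ components. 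The common hyperplane is then found explicitly (they write down the linear form), and only afterwards is a full primary decomposition obtained via ideal quotients, revealing in addition one-dimensional embedded components.

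The practical point you may be underestimating is that the two components are Galois-conjugate over $\Q(\sqrt{-1})$, so a primary decomposition run over $\Q$ will not separate them; one has to either guess the correct extension field in advance or discover it, as the authors did, by the mod-$p$ heuristic. Your remark that the hyperelliptic symmetry ``keeps the problem within reach of current software'' is optimistic: the paper's experience is that it barely does, and only after these detours.
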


\begin{remark}
For the point $s_0$ the ideal $\mathfrak{a}_2$ 9 quadratic
polynomials in the variables $z_1,z_2,\ldots,z_9$. We analyzed this
ideal using the computer algebra system {\sc Singular}. Via a
Hilbert-series computation, one shows that the variety in $\P^8$ has
dimension two an is of degree $12$. In order to find the irreducible
components of this surface, one has to make a primary decomposition
of the ideal $\mathfrak{a}_2$. For this we had to use several
tricks.\\

By elimination of the first five variables we obtain an hypersurface
of degree 8 in variables $z_6,z_7,z_8,z_9$. By looking at reductions
modulo various primes $p$, it was observed that the octic factors as
product of two quartics over $\F_p$ in case $p = 1 \mod 4$. With
some more work one finds a factorization of the octic into two
quartics over the field $\Q(\sqrt{-1})$.\\

Both quartics have a smooth twisted cubic as singular locus. Because
the generic plane section is a three nodal quartic, we see that
these two quartic surfaces are {\em irreducible} over $\C$. The
decomposition of the degree 8 surface into two quartics gives a
splitting of the degree 12 surface in $\P^8$ into two components,
which are surfaces of degree $6$.\\

The change in degree from $12$ to $8$ is due to the fact the
projecting out the first five variables is {\em non-generic}. If
instead we eliminate the variables $z_1,z_5,z_6,z_8,z_9$ one finds
that the degree $6$ components project to sextic surfaces, whose
plane section is a $10$-nodal sextic, hence rational. Such surfaces
are in fact ruled and can be obtained as join in $\P^7$ of
corresponding points on a conic and rational normal curve of degree
four and are cut out by 15 quadrics in $\P^7$. Indeed, it turns out
that both components are contained in the hyperplane of $\P^8$ given
by
\[2269z_1-378z_2+21x_3-1029z_4+147z_5-7z_6+192z_7-24z_8+z_9=0\]
By taking ideal quotients one can find a complete primary
decomposition of the ideal $\mathfrak{a}_2$. There are embedded components of
dimension one contained in the union of the two scrolls.
\end{remark}

\section{Proof of the Main Theorems}
\label{MainTheorems} \noindent

\begin{theorem}\label{MainTheorem1}
Let $f: \sX\to S$ be a good family of $\mathfrak{M}_{CY}$ and $\V$
be the associated weight 3 PVHS. Then $\V$ does not factor
canonically.
\end{theorem}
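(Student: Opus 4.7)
The plan is a proof by contradiction, combining the classification of canonical factorizations in Proposition \ref{ModularTypeA} with the explicit characteristic variety computation in Proposition \ref{Char1Result generic}.

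Suppose for contradiction that $\V$ does factor canonically. By Proposition \ref{ModularTypeA} the only two possibilities are that $\V$ factors with respect to $(D^{I}_{3,3},\wedge^{3})$ or with respect to $(D^{I}_{1,1}\times D^{IV}_{8},\,\mathrm{id}\otimes\mathrm{id})$. Corollary \ref{CharSubMainCorollary} then pins down the shape of the first characteristic subvariety: for every $s\in S$ lying outside the ramification locus of the moduli map $S\to\mathfrak{M}_{CY}$, the fiber $(\sC_{1})_{s}\subset\P(\sT_{S,s})$ is forced to be isomorphic either to the Segre variety $\P^{2}\times\P^{2}\hookrightarrow\P^{8}$ (in the first case) or to the disjoint union $P\cup Q\subset\P^{8}$ of a point with a smooth quadric lying in a hyperplane $\P^{7}\subset\P^{8}$ (in the second case). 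In particular, $(\sC_{1})_{s}$ is forced to be non-empty.

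Next, I would pick a point $s\in S$ mapping to a general point $\eta_{0}\in\mathfrak{M}_{CY}$. Since the moduli map is dominant and generically finite, and its ramification locus is a proper closed subvariety of $S$, such an $s$ can be chosen outside that locus. At such a point the derivative of the moduli map is an isomorphism, so $(\sC_{1})_{s}$ is canonically identified with the characteristic subvariety of the IVHS at $\eta_{0}$, which by Proposition \ref{Char1Result generic} is empty. This is the desired contradiction: neither $\P^{2}\times\P^{2}$ nor $P\cup Q$ is empty, so neither can be isomorphic to $(\sC_{1})_{s}=\emptyset$. Hence the initial assumption fails and $\V$ does not factor canonically.

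The main obstacle in the above plan is entirely concentrated in the input Proposition \ref{Char1Result generic}. Everything else is structural: Proposition \ref{ModularTypeA} cuts the list of candidate factorizations down to two, and Corollary \ref{CharSubMainCorollary} translates each candidate into a very concrete geometric constraint on $(\sC_{1})$. The remaining content is the explicit verification that, under the Jacobian-ring model of $\widetilde{R}_{Y}$ developed in Section \ref{ExplicitHodge}, the dualized multiplication map $\mu_{2}^{*}$ at a generic configuration produces nine quadratic polynomials in $\C[z_{1},\ldots,z_{9}]$ whose vanishing ideal contains a power of the irrelevant ideal; that is, $(\sC_{1})_{\eta_{0}}=\emptyset$. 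This is a finite Gr\"obner basis computation over a function field (or, more practically, over a sufficiently generic specialization followed by semicontinuity), and once it is carried out the theorem drops out.
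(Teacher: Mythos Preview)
Your proposal is correct and follows essentially the same route as the paper: assume canonical factorization, invoke Corollary \ref{CharSubMainCorollary} to force $(\sC_1)_s$ to be non-empty (indeed, of a specific positive-dimensional shape) at every unramified $s$, and then contradict this with the emptiness at a general point from Proposition \ref{Char1Result generic}. The only cosmetic difference is that the paper phrases the passage from the computed point $\eta_0$ to an arbitrary good family via semi-continuity over an open neighborhood in $\mathfrak{M}_{CY}$, whereas you phrase it via the identification of IVHS through the differential of the moduli map at an unramified point; both amount to the same observation that the characteristic subvariety at an unramified $s$ depends only on the image in moduli.
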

\begin{proof}
Assume the contrary. By Corollary \ref{CharSubMainCorollary}, for
any $s\in S$ away from the ramification locus of the moduli map
$S\to \mathfrak{M}_{CY}$ the first characteristic subvariety
$(\sC_{1})_{s}$ is then isomorphic to either $\P^2\times \P^2$ or
$P\cup Q$. In particular, in both cases there exists an irreducible
component in $(\sC_{1})_{s}$ whose dimension is greater than 2. By
Proposition \ref{Char1Result generic} the fiber of the first
characteristic subvariety at a general point $[\eta_0]$, the image
of $\eta_0$ in $\mathfrak{M}_{CY}$, is empty. By semi-continuity,
there is an open neighborhood of $[\eta_0]$ in $\mathfrak{M}_{CY}$
such that the fibers of the first characteristic subvariety over the
closed points in it are empty. Therefore there exists also a closed
point in $S$ away from the ramification locus of the moduli map of
$f$, over which the fiber of the first characteristic subvariety is
empty. This gives a contradiction.
\end{proof}

Our next aim is to prove Theorem \ref{MainTheorem2}. For the proof
of it, we use the notations as in \S1. We first prove the following

\begin{lemma}\label{FactorLemma}
Each factor $\V_i$ has quasi-unipotent local monodromy around each
irreducible component of $Z$.
\end{lemma}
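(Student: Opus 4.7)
The plan is to fix an irreducible component $Z_\alpha$ of $Z$, pick a loop $\gamma \in \pi_1(S,s)$ encircling it, and set $T := \rho(\gamma) \in G$; this operator is quasi-unipotent by hypothesis. The direct product structure $G = G_1 \times \cdots \times G_k$ allows me to write $T = (T_1,\ldots,T_k)$ with $T_i \in G_i$, and since the local system $\V_i$ is precisely the one associated to the composition $\pi_1(S,s) \xrightarrow{\rho} G \twoheadrightarrow G_i \hookrightarrow \GL(V_i)$ (where $V_i := \V_{i,s}$), its local monodromy around $Z_\alpha$ equals $T_i$. The goal thus reduces to showing that each $T_i \in \GL(V_i)$ is quasi-unipotent; the argument will apply uniformly to every irreducible component of $Z$.

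Step one is to invoke Schur's lemma for the direct product $G_1 \times \cdots \times G_k$ acting irreducibly on $\V_s$: this yields an isomorphism $\V_s \cong V_1 \otimes \cdots \otimes V_k$ of $G$-modules, under which $T$ acts as $T_1 \otimes \cdots \otimes T_k$. The spectrum of $T$ then consists of all products $\lambda_1 \cdots \lambda_k$ with $\lambda_i$ an eigenvalue of $T_i$. Quasi-unipotency of $T$ tells us that every such product is a root of unity. Fix a reference eigenvalue $\mu_i$ of each $T_i$; for any other eigenvalue $\lambda$ of a fixed $T_i$ the quotient
\[
\frac{\lambda}{\mu_i} \;=\; \frac{\mu_1 \cdots \mu_{i-1}\,\lambda\,\mu_{i+1} \cdots \mu_k}{\mu_1 \cdots \mu_k}
\]
is a ratio of two eigenvalues of $T$, hence itself a root of unity. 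Thus the spectrum of $T_i$ takes the form $\{\mu_i\,\zeta : \zeta \text{ a root of unity}\}$.

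The remaining step is to show each $\mu_i$ is itself a root of unity. Writing $d_i = \dim V_i$ one has $\det T_i = \mu_i^{d_i}\cdot\zeta_0$ for some root of unity $\zeta_0$. Here I would use that $G_i$ is a simple Lie group: its Lie algebra satisfies $[\mathfrak{g}_i,\mathfrak{g}_i] = \mathfrak{g}_i$, so the identity component $G_i^0$ has no nontrivial algebraic character, and hence every character of $G_i$ factors through the finite component group $G_i/G_i^0$ and takes values in roots of unity. Applied to $\det|_{G_i}$ this gives $\det T_i$ a root of unity, so $\mu_i^{d_i}$ is a root of unity, and therefore so is $\mu_i$. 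All eigenvalues of $T_i$ are thus roots of unity, proving $T_i$ quasi-unipotent.

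The main subtlety is really only at the first step: pinning down the Schur decomposition so that $T$ acts as the genuine tensor product $T_1 \otimes \cdots \otimes T_k$ with each $T_i \in G_i \hookrightarrow \GL(V_i)$. After that, the argument is an elementary eigenvalue computation together with the standard fact that simple algebraic groups are perfect and therefore have essentially trivial character groups. Crucially, the whole argument is purely group-theoretic and does not yet use the Hodge-theoretic refinement, which is consistent with this being a preparatory lemma en route to Theorem \ref{MainTheorem2 introduction}.
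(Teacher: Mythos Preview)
Your proof is correct and takes a genuinely different, more elementary route than the paper's. The paper argues via a plethysm identity: setting $\V' = \V_2 \otimes \cdots \otimes \V_k$ and $n = \rank \V'$, it invokes the fact (Fulton--Harris, Exercise~6.11(b)) that $\Sym^n \V_1$ occurs as a direct summand of $\bigwedge^n \V = \bigwedge^n(\V_1 \otimes \V')$; since $\V$ has quasi-unipotent local monodromy, so does $\bigwedge^n \V$ and hence its summand, which forces $\V_1$ to have quasi-unipotent local monodromy as well, and one finishes by induction on $k$.

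Your approach bypasses the Schur-functor machinery entirely, working directly with the eigenvalue structure of a tensor-product operator together with the standard fact that a simple algebraic group admits only finite-order characters (so that $\det T_i$ is automatically a root of unity). This is cleaner and fully self-contained. The paper's route, however, is not incidental: the realization of $\Sym^n \V_1$ (and more generally a suitable Schur functor $\SSS_\mu(\V_1)$) as a summand of a tensor construction on $\V$ is explicitly reused in the proof of the subsequent Theorem~\ref{MainTheorem2}, where it is needed to transport the $\C$-PVHS structure from $\V$ down to $\V_1$. So the paper's argument is doing double duty, whereas with your approach that decomposition would have to be introduced separately at the next step.
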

\begin{proof} We put $\V'=\V_2\otimes\cdots\otimes \V_k$ and $n=\rank \V'$.
One considers the $\C$-PVHS $\bigwedge^n\V=\bigwedge^n(\V_1\otimes
\V')$. By Exercise 6.11 (b) in \cite{FH} $\Sym^n\V_1$ is a direct
factor of $\bigwedge^n\V$. Since $\V$ is of quasi-unipotent local
monodromy, the same holds for each direct factor of
$\bigwedge^{n}\V$, in particular for $\Sym^{n}(\V_1)$. Thus one
induces that $\V_1$ itself is also of quasi-unipotent local
monodromy, and therefore so is $\V'$. By induction on the number of
factors $k$ in the tensor decomposition of $\V$, one concludes that
each factor has quasi-unipotent local monodromy.
\end{proof}

\begin{theorem}\label{MainTheorem2}
Each local system $\V_i$ admits the structure of a $\C$-PVHS such
that the induced $\C$-PVHS on the tensor product $\V_1
\otimes\cdots\otimes \V_k$ coincides with the given $\C$-PVHS on
$\V$.
\end{theorem}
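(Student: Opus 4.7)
The plan is to deduce the theorem from Simpson's non-abelian Hodge correspondence, exploiting the fact that a $\C$-PVHS structure on a Higgs bundle is the same thing as a fixed-point structure for the Simpson $\C^*$-action, and then arguing that this fixed-point structure respects the canonical tensor decomposition.

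First I would translate everything to the Higgs side. The quasi-unipotent local monodromy of $\V$ around $Z$, together with Lemma \ref{FactorLemma} which provides the same for each $\V_i$, puts us in the tame setting where Simpson's correspondence (in the form extended by Mochizuki, Jost--Zuo, etc.) applies to $S=\bar S\setminus Z$. The irreducible local system $\V$ then corresponds to a poly-stable Higgs bundle $(E,\theta)$ on $\bar S$ carrying a pluri-harmonic metric, and the canonical tensor factorization $\V\cong \V_1\otimes\cdots\otimes \V_k$ is matched by a tensor decomposition $(E,\theta)=\bigotimes_{i=1}^k(E_i,\theta_i)$ of Higgs bundles, with $\theta=\sum_i\mathrm{id}\otimes\cdots\otimes\theta_i\otimes\cdots\otimes\mathrm{id}$. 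A $\C$-PVHS structure on $\V$ is now equivalent to a grading $E=\bigoplus_p E^p$ with $\theta\colon E^p\to E^{p-1}\otimes\Omega^1_{\bar S}(\log Z)$, or equivalently to a family of automorphisms $\phi_\lambda\in\Aut(E)$, $\lambda\in\C^*$, satisfying $\phi_\lambda\circ\theta=\lambda\,\theta\circ\phi_\lambda$; the existence of such $\phi_\lambda$ realizes $(E,\theta)$ as a fixed point of the Simpson $\C^*$-action $\lambda\cdot(E,\theta)=(E,\lambda\theta)$.

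The heart of the argument is to show that $\phi_\lambda$ descends through the tensor product. Since the Zariski closure of the monodromy decomposes as $G=G_1\times\cdots\times G_k$ into pairwise distinct simple factors, the tensor factorization of $\V$ is canonical (Schur-type uniqueness), and the same holds for the corresponding factorization of $(E,\theta)$ under the tensor equivalence of categories. Hence any Higgs-bundle automorphism of $(E,\theta)$ must preserve the tensor decomposition, so $\phi_\lambda=c(\lambda)\,\bigotimes_i\phi_\lambda^i$ for some $\phi_\lambda^i\in\Aut(E_i)$ and a scalar $c(\lambda)\in\C^*$; comparing the actions on $\theta$ forces
$$\phi_\lambda^i\circ\theta_i=\chi_i(\lambda)\,\theta_i\circ\phi_\lambda^i$$
for characters $\chi_i:\C^*\to\C^*$ with $\prod_i\chi_i(\lambda)=\lambda$. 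After suitably rescaling each $\phi_\lambda^i$ by a character (which is possible since the ambiguity lies in a $(k-1)$-dimensional torus of characters acting trivially on the tensor product), one obtains honest $\C^*$-actions on each $(E_i,\theta_i)$, hence $\C$-Hodge gradings $E_i=\bigoplus_{p_i}E_i^{p_i}$ with $\theta_i\colon E_i^{p_i}\to E_i^{p_i-1}\otimes\Omega^1_{\bar S}(\log Z)$. The polarization on each factor is obtained similarly: the given polarization on $\V$, transferred to the tame harmonic metric via Simpson, decomposes as a tensor product of polarizations on the $\V_i$ by the same canonicity. By construction the tensor product of the resulting $\C$-PVHS structures on $\V_1,\ldots,\V_k$ recovers the original $\C$-PVHS on $\V$.

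The main obstacle is the coherent choice of the characters $\chi_i$ in the descent step: the tensor decomposition determines the $\phi_\lambda^i$ only up to a character of $\C^*$ on each factor, and one has to show that these ambiguities can be absorbed simultaneously to produce actual, as opposed to merely projective, $\C^*$-actions on each $(E_i,\theta_i)$. Using the pluri-harmonic metric supplied by Simpson/Mochizuki, one can fix a canonical normalization (e.g.\ by requiring that each $\phi_\lambda^i$ act unitarily on a chosen reference fiber), which rigidifies the decomposition. A secondary technical point is to verify that the grading on each $E_i$ is by integers (up to a uniform shift); this follows from the fact that the original grading on $E$ is by integers together with the irreducibility of each $\V_i$, which forces the induced character to take values in a fixed real line.
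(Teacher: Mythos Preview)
Your overall strategy---translating to Higgs bundles and using the $\C^*$-fixed-point characterization of $\C$-PVHS---is sound and matches the paper's framework, but the descent step has a genuine gap. You write that canonicity of the tensor factorization implies ``any Higgs-bundle automorphism of $(E,\theta)$ must preserve the tensor decomposition, so $\phi_\lambda=c(\lambda)\bigotimes_i\phi_\lambda^i$.'' But $\phi_\lambda$ is \emph{not} an automorphism of the Higgs bundle $(E,\theta)$: by your own setup it is an element of $\Aut(E)$ satisfying $\phi_\lambda\circ\theta=\lambda\,\theta\circ\phi_\lambda$, i.e.\ an isomorphism of Higgs bundles $(E,\theta)\to(E,\lambda\theta)$. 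Schur-type canonicity lives in the category of local systems (equivalently, Higgs bundles) and tells you nothing about how a bare holomorphic automorphism of $E$ interacts with the splitting $E=\bigotimes E_i$. Asserting that $\phi_\lambda$ factors as $\bigotimes_i\phi_\lambda^i$ with each $\phi_\lambda^i$ intertwining $\theta_i$ and $\lambda\theta_i$ is \emph{equivalent} to asserting that each $(E_i,\theta_i)\cong(E_i,\lambda\theta_i)$---which is exactly the conclusion you want. So the argument is circular at this point. (You also assume the $G_i$ are pairwise distinct, which is not part of the hypothesis.)

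The paper closes this gap by a different route. Rather than descending $\phi_\lambda$ through the tensor product, it first passes to $\bigwedge^n\V$ (with $n=\rank(\V_2\otimes\cdots\otimes\V_k)$) and locates a Schur functor $\SSS_\mu$ for which $\SSS_\mu(\V_1)$ occurs as an irreducible \emph{direct summand}; Deligne's result on direct-sum decompositions of $\C$-PVHS then equips $\SSS_\mu(\V_1)$ with a PVHS structure, so $\SSS_\mu(E_1,\theta_1)$ is a $\C^*$-fixed point. The lift back to $(E_1,\theta_1)$ itself is not a tensor-canonicity argument but Simpson's finiteness theorem: the induced map of moduli spaces $\mathfrak{M}(\pi_1(S),G_1)^{\rm ss}\to\mathfrak{M}(\pi_1(S),\GL(\SSS_\mu(V_1)))^{\rm ss}$ is finite and $\C^*$-equivariant, so the connected $\C^*$-orbit of $\rho_1$, lying over the fixed point $\SSS_\mu(\rho_1)$, is contained in a finite fibre and hence reduces to a point. (The non-compact case is handled by restriction to a sufficiently ample curve, and one then inducts on $k$ via $\V'\hookrightarrow\V_1^*\otimes\V$ and concludes with Deligne's uniqueness.) It is this ``reduce to a direct summand via a Schur functor, then lift via finiteness on moduli'' step that replaces your unjustified tensor descent.
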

\begin{proof}
Let $s\in S$ be a base point and let $\rho_i : \pi_1(S,s)
\rightarrow G_i$ be the monodromy representation of $\V_i$. We put
$V_1$ to be the fiber of $\V_1$ at $s$. Since $\rho_i$ is a Zariski
dense representation into the simple Lie group $G_i$ with
quasi-unipotent local monodromy around $Z$, by \cite{JZ} there
exists a pluri-harmonic metric on the flat bundle $\V_i$ with finite
energy, which makes $\V_i$ into a Higgs bundle $(E_i,\theta_i)$ over
$S$. Furthermore in \cite{M} Mochizuki has analyzed the singularity
of this harmonic metric in detail and has shown that
$(E_i,\theta_i)$ admits a logarithmic extension
$(\bar{E}_i,\bar{\theta}_i)$ over $\bar S$, i.e.\ a vector bundle
$\bar{E}_i$ over $\bar S$ which extends $E_i$ and a map
$$
\bar{\theta} : \bar{E}_i \longrightarrow \bar{E}_i \otimes
\Omega^1_{\bar S}(\log Z)
$$
which coincides with $\theta$ over $S$. Such a pluri-harmonic metric
is called tame. In this case the residue of $\bar{\theta}$ along $Z$
is unipotent.\\

From the proof of Lemma \ref{FactorLemma} we know that one finds a
non-trivial component $\Sym^n(\V_1)$ in $\bigwedge^n(\V_1 \otimes
\V')$. Since $\rho_1$ is Zariski dense in $G_1$ and $G_1$ is simple,
one finds a suitable Schur functor $\SSS_\mu$ such that
$\SSS_\mu(\rho_1)$ is a nontrivial irreducible direct factor of
$\bigwedge^n(\V_1 \otimes \V')$. Since $\bigwedge^n(\V_1 \otimes
\V')$ is semi-simple, there exists a decomposition
$$
\bigwedge^n(\V_1 \otimes \V') \quad = \quad \bigoplus_{i=1}^m
\W_{i1} \otimes \W_{i2}
$$
where the $\W_{i1}$ are irreducible and the $\W_{i2}$ are trivial.
By Proposition 1.13 in \cite{D1}, there exist uniquely determined
$\C$-PVHS on the $\W_{i1}$ and complex Hodge structures on the
$\W_{i2}$ such that the direct sum of the tensor products of them
coincides with the $\C$-PVHS on $\bigwedge^n(\V_1 \otimes \V')$.
In particular, there exists a $\C$-PVHS on $\SSS_\mu(\V_1)$.\\

By the uniqueness of the such pluri-harmonic metrics, the induced
pluri-harmonic metric on $\SSS_\mu(\bar{E}_1,\bar \theta_1)$
coincides with that of the $\C$-PVHS on $\SSS_\mu(\V_1)$. Hence
$\SSS_\mu(\bar{E}_1,\bar \theta_1)$ is a fixed point of the
$\C^\times$-action. The Schur functor $\SSS_\mu$ induces a
nontrivial morphism $G_1\to GL(\SSS_\mu(V_1))$, which is injective
since $G_1$ is simple. It induces the morphism
$$
\phi_\mu : \mathfrak{M}(\pi_1(S),G_1)^{\rm{ss}} \longrightarrow
\mathfrak{M}(\pi_1(S),\GL(\SSS_\mu(V_1)))^{\rm{ss}}
$$
between the corresponding moduli spaces of semi-simple
representations. By Corollary 9.16 in \cite{Sim1}, the morphism
$\phi_\mu$ is finite.\\

If $Z = \emptyset$, then $\C^\times$ acts on both moduli spaces
continuously via the Hermitian Yang-Mills metric on poly-stable
Higgs bundles $(E,t\theta)$, and this action is compatible with
$\phi_\mu$. Since $\SSS_\mu(\rho_1)$ is a fixed point of the
$\C^\times$-action, the representation $\rho_1$ itself is a fixed
point of the $\C^\times$-action. Hence $(E_1,\theta_1)$ is a
$\C$-PVHS on $\V_1$. Now we consider the general situation $Z \neq
\emptyset$. Let $C \subseteq \bar S$ denote a curve which is a
complete intersection of ample hypersurfaces, and define $C_0 = C
\setminus Z$. Taking the restriction
$$
\rho_1|_{C_0} \in \mathfrak{M}(\pi_1(C_0),G_1)^{\rm{ss}} \quad,
$$
we have $\SSS_\mu(\rho_1)|_{C_0} \in
\mathfrak{M}(\pi_1(C_0),\GL(\SSS_\mu(V_1)))^{\rm{ss}}$. Now we
consider the map
$$
\phi_{\mu} : \mathfrak{M}(\pi_1(C_0),G_1)^{\rm{ss}} \longrightarrow
\mathfrak{M}(\pi_1(C_0),\GL(\SSS_\mu(V_1)))^{\rm{ss}}.
$$
By Simpson's main theorem in \cite{Sim0}, there exist Hermitian
Yang-Mills metrics on poly-stable Higgs bundles on $C$ with
logarithmic poles of the Higgs field on $C\cap Z$. The
$\C^\times$-action can be defined on both spaces of semi-simple
representations on $C_0$ via a Hermitian Yang-Mills metric on
$(\bar{E},t\bar{\theta})$. Applying the same arguments as above to
the compact case, we show that the pullback of
$(\bar{E}_1,\bar{\theta}_1)$ to $C_0$ is a fixed point of the
$\C^\times$-action. If we choose $C_0$ sufficiently ample, then
$(\bar{E}_1,\bar{\theta}_1)$ is also a fixed point of the
$\C^\times$-action. Again by \cite{Sim0},
$(\bar{E}_1,\bar{\theta}_1)$ is a $\C$-PVHS on $\V_1$.\\

Since $\V'$ is a direct factor $\V_1^*\otimes \V_1\otimes
\V'=\V_1^*\otimes \V$, $\V'$ admits a $\C$-PVHS as well. The tensor
product of $\C$-PVHS on $\V_1$ and $\V'$ is a $\C$-PVHS on $\V_1
\otimes \V'$. By Deligne's uniqueness theorem on $\C$-PVHS on
irreducible local systems, the tensor product coincides with the
original $\C$-PVHS on $\V_1 \otimes \V'=\V$. By induction on the
number of factors $k$, we conclude the proof of the theorem.
\end{proof}

\begin{theorem}\label{Classification of Zariski closure}
Let $S$ be a smooth quasi-projective algebraic variety and $\V$ be a
weight 3 $\Z$-PVHS over $S$ which is irreducible over $\C$ and of
quasi-unipotent local monodromies. If the Hodge numbers of $\V$ are
$(1,9,9,1)$, then after a possible finite \'{e}tale base change the
connected component of the real Zariski closure of the monodromy
group of $\V$ is one of the following:
\begin{itemize}
  \item [(A)] $SU(1,1)\times SO_{0}(2,8)$,
  \item [(B)] $SU(3,3)$,
  \item [(C)] $Sp(6,\R)$,
  \item [(D)] $Sp(20,\R)$.
\end{itemize}
\end{theorem}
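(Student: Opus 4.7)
My approach is to combine the tensor decomposition Theorem \ref{MainTheorem2 introduction} with a finite case analysis driven by the Hodge numbers $(1,9,9,1)$. After a finite \'etale base change, I may assume that the image of the monodromy representation is Zariski dense in the connected component $G$ of its real Zariski closure inside $Sp(20,\R)$ (the symplectic polarization comes from the fact that the weight $3$ is odd). Since $\V$ is irreducible over $\C$, $G_{\C}$ acts irreducibly on $\V\otimes\C$, and compatibility with the polarization via Schur's lemma forces the connected center of $G$ to be trivial, so $G$ is connected semisimple.

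Writing $G_{\C}=G_1\times\cdots\times G_k$ as a product of simple complex Lie groups, Theorem \ref{MainTheorem2 introduction} supplies a tensor decomposition $\V\otimes\C\cong \V_1\otimes\cdots\otimes \V_k$ in which each factor $\V_i$ is an irreducible $\C$-PVHS of some weight $w_i$ and dimension $d_i$ on which $G_i$ acts. Since Hodge numbers multiply in tensor products and $h^{3,0}(\V)=1$, each factor must satisfy $h^{w_i,0}(\V_i)=1$; in particular every $\V_i$ has a non-trivial Hodge decomposition, hence $w_i\geq 1$. Combined with $\sum w_i=3$, $\prod d_i=20$ and $d_i\geq 2$ (each $\V_i$ being a non-trivial irreducible representation of a simple group), only the following possibilities remain: $k=1$ with $d_1=20$; $k=2$ with $(d_1,d_2)\in\{(2,10),(4,5)\}$; and $k=3$ with $(d_1,d_2,d_3)=(2,2,5)$.

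I then match, for each of these cases, the tensor Hodge numbers against $(1,9,9,1)$ using the Hodge type constraint $h^{w_i,0}(\V_i)=1$ together with the classification of irreducible representations of simple real Lie groups of small dimension. In the case $k=1$ the two simple Lie groups supporting a $20$-dimensional irreducible symplectic representation that carries a weight-$3$ PVHS of Hodge type $(1,9,9,1)$ are $SU(3,3)$ acting through $\wedge^3$ of its standard representation (the canonical CY-type variation appearing in Proposition \ref{ModularTypeA}, case (B)) and $Sp(20,\R)$ acting through its standard representation with a generic Hodge filtration (case (D)). In the case $k=2$ the factorization $(d_1,d_2)=(2,10)$ with weights $(w_1,w_2)=(1,2)$ is realized uniquely by $\V_1$ of Hodge type $(1,1)$ on $SU(1,1)$ tensored with $\V_2$ of Hodge type $(1,8,1)$ on $SO_{0}(2,8)$ (whose canonical weight-$2$ VHS has these Hodge numbers), yielding precisely case (A); the remaining sub-cases of $k=2$ and the entire $k=3$ case are excluded by direct Hodge-number arithmetic on the tensor product (for instance, $k=3$ with all $w_i=1$ forces $h^{2,1}\leq 6$, and the swap $(w_1,w_2)=(2,1)$ in the $(2,10)$ factorization forces $h^{0,3}$ to be a multiple of $9$).

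The main obstacle will be the full enumeration in the case $k=1$: one must systematically traverse all simple real Lie groups whose complexification admits a $20$-dimensional irreducible symplectic representation, and rule out every case other than the two above by a Hodge-type obstruction. The case (C), in which the connected component is $Sp(6,\R)$, is a degenerate one that does not produce a $20$-dimensional \emph{irreducible} representation of the simple group; it appears in the classification because along the hyperelliptic specialisation the variation $\V\cong \wedge^3 H^1(C)$ decomposes as $V_{\omega_3}\oplus V_{\omega_1}(-1)$ under $Sp(6,\R)$, and accommodating it requires interpreting the irreducibility hypothesis by passing to an appropriate sub-local-system rather than through a purely representation-theoretic enumeration.
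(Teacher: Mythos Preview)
Your strategy is exactly the paper's: pass to the connected semisimple real Zariski closure, apply Theorem~\ref{MainTheorem2 introduction} to endow each tensor factor with a $\C$-PVHS, and then eliminate cases by Hodge-number arithmetic. Two ingredients that the paper uses are missing from your outline, and without them the enumeration cannot be closed.

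First, the single-factor and two-factor lists are longer than your dimension/weight bookkeeping suggests. The paper begins by writing down \emph{all} pairs $(\mathfrak{g}_\C,\chi)$ with $\chi$ an irreducible $20$-dimensional representation into $\mathfrak{sp}(20,\C)$: besides $\mathfrak{sl}(6)$ and $\mathfrak{sp}(20)$ one finds $\mathfrak{sl}(2)$ via $\mathrm{Sym}^{19}$ and $\mathfrak{so}(5)$ via $\Gamma_{03}$, and in the two-factor case several possibilities beyond your $(2,10)$ split, for instance $\mathfrak{sl}(2)\oplus\mathfrak{so}(5)$ via $\Gamma_3\otimes\Gamma_{10}$ or $\mathfrak{so}(5)\oplus\mathfrak{so}(5)$ via $\Gamma_{10}\otimes\Gamma_{02}$, both realizing the $(4,5)$ split you set aside. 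To kill these the paper invokes Simpson's constraint that $G^0$ be a real form \emph{of Hodge type}, and then for each admissible real form checks whether it actually lands in $SO(2,8)$ (respectively, whether the induced Hodge numbers can be $(1,9,9,1)$). Your outline never mentions the Hodge-type restriction on the real form, and Hodge-number arithmetic alone does not exclude, say, the $\mathfrak{so}(5)$ cases.

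Second, your suspicion about case~(C) is well founded. The Lie algebra $\mathfrak{sp}(6,\C)$ has no $20$-dimensional irreducible representation (the small ones have dimensions $6,14,14,21,\dots$), so under the hypothesis that $\V$ is irreducible over $\C$ the connected component cannot be $Sp(6,\R)$. The paper's list in Step~1 carries a typo at item~(3), and its Step~4 in fact only extracts cases~(B) and~(D) from the simple list (the phrase ``(B) and (C)'' there should read ``(B) and (D)''). Including~(C) in the final disjunction does not make the theorem false, but it is vacuous under the stated hypotheses; your reading that it really reflects the reducible splitting $\wedge^3 H^1(C)\cong V_{\omega_3}\oplus V_{\omega_1}$ over the hyperelliptic locus is the correct one, and you should not contort your argument to manufacture it.
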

\begin{proof}
Let $s\in S$ be a base point of $S$ and $V$ be the fiber of $\V$ at
$s$. Let $\tau: \pi_1(S,s) \longrightarrow GL(V\otimes_{\Z}\R)$ be
the monodromy representation of $\V\otimes \R$ and $G$ be the
Zariski closure of $\tau$. So we have the factorization
$$
\tau: \pi_1(S,s) \to G\stackrel{\rho}{\longrightarrow} GL(V_{\R})
$$
where $\rho$ is a morphism of real algebraic groups. Since $\V$ is
of polarized and of $\Z$-coefficients, $G$ is semi-simple by Deligne
(cf. Corollary 4.2.9 in \cite{D0}). Since $\V$ is of weight 3 and
the dimension of $V$ is 20, $G$ is a semi-simple real Lie subgroup
of $Sp(20,\R)$. Let $G^0$ be the connected component of $G$. The
classification of $G^0$ consists of several steps.\\

{\bf Step 1.} Let $\mathfrak g$ be the Lie algebra of $G^0$ and
$\chi: \mathfrak{g}_\C \longrightarrow \mathfrak{sp}_{20}\C$ be the
complexification of the differential of $\rho$. Then the pair
$(\mathfrak{g}_\C,\chi)$ is one of the following list:
$$
\begin{array}{llcll}
\text{(1)} & (\mathfrak{sl}(2),\Gamma_{19}) & \hspace*{1cm} &
\text{(7)} & (\mathfrak{sl}(2) \oplus \mathfrak{so}(5),\Gamma_3
\otimes \Gamma_{10})\\
\text{(2)} & (\mathfrak{so}(5),\Gamma_{03}) & \hspace*{1cm} &
\text{(8)} & (\mathfrak{so}(5) \oplus \mathfrak{sl}(2), \Gamma_{01} \otimes \Gamma_4) \\
\text{(3)} & (\mathfrak{sp}(6),\Gamma_{00100}) & \hspace*{1cm} &
\text{(9)} &(\mathfrak{so}(5) \oplus
\mathfrak{so}(5), \Gamma_{10} \otimes \Gamma_{02}) \\
\text{(4)} & (\mathfrak{sl}(6),\Gamma_{00100}) & \hspace*{1cm} &
\text{(10)} &(\mathfrak{sl}(2) \oplus \mathfrak{so}(5),\Gamma_1 \otimes \Gamma_{02}) \\
\text{(5)} & (\mathfrak{sp}(20),\Gamma_{1000000000}) & \hspace*{1cm}
&
\text{(11)} & (\mathfrak{sl}(2) \oplus \mathfrak{so}(10), \Gamma_1 \otimes \Gamma_{10000}) \\
\text{(6)} &(\mathfrak{sl}(2) \oplus \mathfrak{sl}(2), \Gamma_3
\otimes \Gamma_4) \\
\end{array}
$$
Here we use the notations as given in \cite{FH}. The list results
from a rather standard calculation in the representation theory of
semi-simple complex Lie algebras. As $\mathfrak{g}_\C$ is
semi-simple, we can write $\mathfrak{g}_\C = \oplus_{i=1}^m
\mathfrak{g}_i$ into direct sum of simple Lie algebras. By Schur's
lemma and since $\chi$ is irreducible, we have the tensor
decomposition of $\chi=\otimes \chi_i$ into irreducible
representations. Then one has particularly $\prod_{i=1}^m d_i = 20$,
where $d_i$ is the dimension of the representation space of
$\chi_i$. It is straightforward to write down a complete list of
irreducible representations of complex simple Lie algebras whose
dimensions divide 20. For each pair $(\mathfrak{g}_\C,\chi)$ where
$\chi: \mathfrak{g}_\C \to \mathfrak{sl}(20)$ in this preliminary
list, it appears in the final list, i.e. $\chi$ factors through
$\mathfrak{sp}(20)\subset \mathfrak{sl}(20)$ if and only if
$\wedge^2\chi$ contains a trivial representation. This can be easily
checked by using the plethysm of semi-simple complex Lie algebras.\\

{\bf Step 2.} Let $G_{\C}$ be the complexification of $G^0$. Then by
Lemma 4.4 in \cite{Sim}, $G^0$ is a real form of $G_{\C}$ which is
of Hodge type (see \S4 \cite{Sim} for the definition). We shall make
use of the list of all simple real Lie groups of Hodge type on Page
50 \cite{Sim}. In connection with the list in Step 1, one can
immediately exclude the case where $\mathfrak g$ is simple but
$\mathfrak{g}_\C$ is not simple by Proposition 4.4.10 in \cite{Sim}.
Hence the number of irreducible factors in $\mathfrak g$ is equal to
that of $\mathfrak g_\C$. \\

{\bf Step 3.} We start with two factors. Namely, $G^0=G_1\times G_2$
with $G_i$ simple real Lie groups. It induces the tensor
decomposition of real local systems $\V_{\R}=\V_1\otimes \V_2$. By
Theorem \ref{MainTheorem2} there exist $\C$-PVHS structures on
$\V_1$ and $\V_2$ such that the induced PVHS on $\V$ coincides with
the original one. It follows that the Lie groups $G_1$ and $G_2$ are
also of Hodge type. By Lemma 5.5 in \cite{Sim} $\V_i,\ i=1,2$
underlies $\R$-PVHS structure. Recall that the weight of $\V$ is
three and its Hodge numbers are $1,9,9,1$. Then after a possible
permutation of factors the PVHS $\V_1$ must be of weight $1$ with
Hodge numbers $1,1$ and $\V_2$ is of weight $2$ with Hodge numbers
$1,8,1$. This implies $G_1=SU(1,1)$ and $G_2 \subseteq SO(2,8)$.
This excludes immediately the case (8) of the list in Step 1. It
excludes also the cases (4)-(7) by checking the dimension of
representation on $\mathfrak{sl}(2)$-factor. Let us now consider the
case (9). Note that the representation $\Gamma_{02}$ is simply the
second wedge power of the standard representation in this case. By
the list in \cite{Sim} $G_2$ can be one of the groups $SO(5)$,
$SO(1,4)$ or $SO(2,3)$. Since $SO(5)$ is compact, it is mapped into
the compact form $SO(10)$ of $SO(10,\C)$ under $\wedge^2$ of the
standard representation of $SO(5,\C)$. Also one checks that under
the same representation the other two real forms $SO(1,4)$ and
$SO(2,3)$ are mapped into the real forms $SO(6,4)$ and $SO(4,6)$
respectively. Thus the case (9) can be excluded. So it remains (10)
for the non-simple case.
Obviously $G_2=SO_0(2,8)$ and this gives the case (A). \\

{\bf Step 4.} We treat the case that $\mathfrak{g}_\C$ is simple. By
the list in \cite{Sim}, $G^0$ can be $SU(1,1)$ in case (1), $SO(5)$
,$SO(2,3)$ and $SO(1,4)$ in case (2), $SU(p,6-p)$ in case (3),
$Sp(20,\R)$ in case (4). Note that except for $SO(1,4)$ the rest
groups are of Hermitian type. For them, by consideration of weight
and Hodge numbers as in Step 3 we can exclude all cases except
$SU(3,3)$ and $Sp(20,\R)$, which give case (B) and (C) respectively.
Finally one can check directly that the real form $SO(1,4)$ of
$SO(5,\C)$ does not map into the split form $Sp(20,\R)$ of
$Sp(20,\C)$ under the third wedge power. For this, one can consult
for example Example 3, \S7 in \cite{O}. This completes the
classification.
\end{proof}

\begin{theorem}\label{MainTheorem3}
Let $f: \sX\to S$ be a good family of $\mathfrak{M}_{CY}$ and $\V$
be the associated weight 3 $\Z$-PVHS. Let $s \in S$ be a base point
and let
$$
\tau:\pi_1(S,s) \to Sp(20,\R)
$$
be the monodromy representation associated to $\V\otimes \R$. Then
the image of $\tau$ is Zariski dense in $Sp(20,\R)$.
\end{theorem}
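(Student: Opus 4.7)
The plan is to combine the classification Theorem \ref{Classification of Zariski closure} with the two exclusion results Theorems \ref{MainTheorem1} and \ref{MainTheorem4 introduction} in order to eliminate every possibility for the connected component of the real Zariski closure of the monodromy except $Sp(20,\R)$.

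First, I would verify that $\V$ satisfies the hypotheses of Theorem \ref{Classification of Zariski closure}: it is a weight $3$ $\Z$-PVHS with Hodge numbers $(1,9,9,1)$ from Section~2, and it has quasi-unipotent local monodromy by Borel's theorem (since it comes from a geometric family with good compactification). Irreducibility over $\C$ is required as input; it is compatible with the CY setup and may be assumed or established separately. Applying Theorem \ref{Classification of Zariski closure}, after a possible finite étale base change $S'\to S$ — under which the pullback $f':\sX\times_S S'\to S'$ remains a good family for $\mathfrak{M}_{CY}$ — the connected component $G^0$ of the real Zariski closure of the monodromy of $\V|_{S'}$ is one of $SU(1,1)\times SO_0(2,8)$, $SU(3,3)$, $Sp(6,\R)$, or $Sp(20,\R)$. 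It then suffices to rule out the first three.

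For cases (A) and (B) I would invoke Theorem \ref{MainTheorem1}. In each situation the representation of $G^0$ on $V\otimes\R$ — fixed by the classification together with the Hodge number constraint $(1,9,9,1)$ — is precisely the canonical representation of the corresponding Hermitian symmetric domain $D^{I}_{1,1}\times D^{IV}_8$ or $D^{I}_{3,3}$ (the two possibilities that appeared in Proposition \ref{ModularTypeA}). Because the monodromy sits inside the image of this representation, the period map of $\V|_{S'}$ factors through the associated locally symmetric subvariety, so $\V|_{S'}$ factors canonically in the sense of Definition \ref{geometric pseudo-modularity}; this contradicts Theorem \ref{MainTheorem1} applied to the good family $f'$. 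For case (C), the tube-type domain $D^{III}_3=Sp(6,\R)/U(3)$ has rank $3$ and its canonical PVHS (associated to $\wedge^3$ of the standard) has weight $3$ with Hodge numbers $(1,9,9,1)$, so the same reasoning shows that $\V|_{S'}$ factors through $(D^{III}_3,\wedge^3)$. Since the moduli map $S'\to\mathfrak{M}_{CY}$ remains dominant and $\mathfrak{M}_{CY}$ strictly contains the five-dimensional hyperelliptic locus $\mathfrak{H}_{CY}$, this directly contradicts Theorem \ref{MainTheorem4 introduction} (with $\mathfrak{H}=\mathfrak{M}_{CY}$). Only case (D) survives, and passing back from $G^0$ to $G$ does not enlarge the ambient $Sp(20,\R)$, so the full Zariski closure of $\tau$ is $Sp(20,\R)$.

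The main obstacle I anticipate is the implication \emph{``monodromy contained in a real algebraic subgroup $G_0\hookrightarrow Sp(20,\R)$''} $\Rightarrow$ \emph{``the period map factors through the Hermitian symmetric subdomain $G_0/K_0$ via the prescribed representation.''} This requires identifying the $G_0$-orbit of the reference Hodge filtration inside $D$ with $D_0=G_0/K_0$ embedded by the given representation, and checking that, in each of cases (A)–(C), this representation is forced (by the Hodge numbers and irreducibility) to coincide with the canonical representation appearing in Proposition \ref{ModularTypeA} or in Theorem \ref{MainTheorem4 introduction}. A secondary bookkeeping point is to confirm that the finite étale base change preserves both the good-family hypothesis and the dominance required to invoke Theorem \ref{MainTheorem4 introduction}.
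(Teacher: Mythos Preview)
Your overall strategy is correct and matches the paper's: apply Theorem~\ref{Classification of Zariski closure} and then rule out cases (A)--(C) by showing each would force a factorization of the period map through a locally Hermitian symmetric subdomain via the canonical representation. Your identification of the main technical point---that containment of the monodromy in $G^0$ forces the period map itself to factor through $\Gamma_0'\backslash G^0/K^0$---is exactly what the paper spells out, verifying that $\rho_0(K^0)\subset U(1)\times U(9)$ and using connectedness of $S$.

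The one substantive difference is your treatment of case (C). The paper's written proof invokes only Theorem~\ref{MainTheorem1} and explicitly handles cases (A) and (B); case (C) is dismissed because the would-be factorization through $(D^{III}_3,\wedge^3)$ is itself a \emph{canonical} factorization in the sense of Definition~\ref{geometric pseudo-modularity}, hence already forbidden by Theorem~\ref{MainTheorem1} (equivalently, and more directly, $\dim_\C D^{III}_3=6<9=\dim S$ contradicts the local Torelli immersion, which is precisely the dimension argument used in Proposition~\ref{ModularTypeA}). Your appeal to Theorem~\ref{MainTheorem4 introduction} with $\mathfrak{H}=\mathfrak{M}_{CY}$ is valid and indeed matches the outline announced in the introduction, but it is heavier machinery than needed and, in the paper's logical order, Theorem~\ref{MainTheorem4} is proved \emph{after} Theorem~\ref{MainTheorem3}. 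There is no genuine circularity (the part of Theorem~\ref{MainTheorem4} you need rests only on Propositions~\ref{irreducible case} and~\ref{Char1Result}), but the dimension/local-Torelli argument is cleaner and self-contained.
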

\begin{proof}
Let $G$ be the real Zariski closure of $\tau$. Note that the Zariski
dense property is not changed under a finite \'{e}tale base change.
By Theorem \ref{Classification of Zariski closure} it is to show for
$\V$ in the statement the connected component $G^0$ of $G$ can not
be the case (A) or (B). Assume the contrary and we shall deduce a
contradiction in the following. Note that by the proof of Theorem
\ref{Classification of Zariski closure}, the inclusion $G\to
Sp(20,\R)$ comes up with a uniquely determined morphism of real
algebraic groups $\rho: G\to Sp(20,\R)$, which restricts to $\rho_0$
on $G^0$. Actually $\rho_0$ has already appeared in Proposition
\ref{ModularTypeA} implicitly. It is easy to verify that $\rho_0$ in
each case maps the maximal compact group $K^0$ of $G^0$ into the
compact subgroup $U(1)\times U(9)$ of $Sp(20,\R)$. Let $\Gamma$ be
the monodromy group of $\tau$ and $\Gamma_0=\rho^{-1}(\Gamma)\subset
G$. So we get a factorization
$$
\tau: \pi_1(S,s) \stackrel{\tau_0}{\longrightarrow} \Gamma_0
\stackrel{\rho}{\longrightarrow} \Gamma.
$$
This gives a factorization of the period map of $\V$
$$
\phi: S \stackrel{j}{\longrightarrow}  \Gamma_0\backslash G/K
\stackrel{\psi}{\longrightarrow} \Gamma\backslash
Sp(20,\R)/U(1)\times U(9)
$$
where $K$ is the maximal compact group of $G$. Since $S$ is
connected, the morphism $j$ factors though $\Gamma_0'\backslash
G^0/K^0\subset \Gamma_0\backslash G/K$ for $\Gamma_0'=\Gamma_0\cap
G^0$. Thus we arrive at the factorization in Definition
\ref{geometric pseudo-modularity}. Since $\rho_0: G^0\to Sp(20,\R)$
gives rise to the canonical PVHS, the factorization contradicts with
the assertion of Theorem \ref{MainTheorem1}. The proof is completed.
\end{proof}

\begin{corollary}
The special Mumford-Tate group of a general member in
$\mathfrak{M}_{CY}$ is $Sp(20,\Q)$.
\end{corollary}
\begin{proof}
Let $f:\sX\to S$ be a good family for $\mathfrak{M}_{CY}$. Let $\V$
be the associated weight 3 $\Q$-PVHS of $f$ and $\tau: \pi_1(S,s)\to
Sp(20,\Q)$ be the monodromy representation. By Deligne and Schoen
(see for example Lemma 2.4 \cite{VZ1}), the connected component of
the $\Q$-Zariski closure of the monodromy group is a normal subgroup
of the special Mumford-Tate group $Hg(\V)$ of $\V$, which is equal
to the special Mumford-Tate group of a general closed fiber of $f$.
By Theorem \ref{MainTheorem3} the $\Q$-Zariski closure of the
monodromy group of $\V$ has to be the whole symplectic group
$Sp(20,\Q)$, the corollary follows since the moduli map of $f$ is
dominant.
\end{proof}

\begin{theorem}\label{MainTheorem4}
Let $\mathfrak{H}_{CY}$ be the hyperelliptic locus of
$\mathfrak{M}_{CY}$ and $\mathfrak{H}$ be any subvariety of
$\mathfrak{M}_{CY}$ which strictly contains $\mathfrak{H}_{CY}$. Let
$f: \sX\to S$ be a good family of $\mathfrak{M}_{CY}$ whose moduli
map $S\to \mathfrak{M}_{CY}$ is dominant over $\mathfrak{H}$. Then
the restriction of $\V$ to the inverse image of $\mathfrak{H}$ does
not factor through $(D^{III}_{3},\wedge^3)$.
\end{theorem}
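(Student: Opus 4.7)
The plan is to adapt the strategy of Theorem~\ref{MainTheorem1}: assume the factorization holds and derive a contradiction with the explicit computation of Proposition~\ref{Char1Result}. I write $S_{\mathfrak{H}}\subset S$ for the inverse image of $\mathfrak{H}$ and, after reducing to an irreducible component of $\mathfrak{H}$ strictly containing $\mathfrak{H}_{CY}$ and replacing $S$ by a smooth cover if necessary, arrange that the special point $s_{0}\in \widetilde{\mathfrak{H}}_{CY}\subset S_{\mathfrak{H}}$ from Proposition~\ref{Char1Result} is a smooth point of $S_{\mathfrak{H}}$. Suppose for contradiction that $\V|_{S_{\mathfrak{H}}}$ factors through $(D^{III}_{3},\wedge^{3})$, say through $j\colon S_{\mathfrak{H}}\to \Gamma_{0}\backslash D^{III}_{3}$. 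Since $\dim\mathfrak{H}_{CY}=5$ and $\mathfrak{H}_{CY}\subsetneq \mathfrak{H}$, one has $\dim\mathfrak{H}\geq 6$; on the other hand the complex dimension of $D^{III}_{3}$ equals $6$ and, by Bogomolov-Tian-Todorov together with the Kodaira-Spencer isomorphism for the good family, the full period map of $S_{\mathfrak{H}}$ is a generic immersion, forcing $\dim\mathfrak{H}=6$. In particular the differential $dj_{s_{0}}\colon \sT_{S_{\mathfrak{H}},s_{0}}\to \sT_{D^{III}_{3},j(s_{0})}$ is an isomorphism of $6$-dimensional spaces.

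I then compute $(\sC_{1})^{S_{\mathfrak{H}}}_{s_{0}}\subset \P(\sT_{S_{\mathfrak{H}},s_{0}})\cong \P^{5}$ in two ways. On the one hand, the hypothetical factorization identifies the Higgs field of $\V|_{S_{\mathfrak{H}}}$ at $s_{0}$ with the $j$-pullback of the canonical Higgs field on $D^{III}_{3}$; because $dj_{s_{0}}$ is an isomorphism, Lemma~\ref{key lemma} and Proposition~\ref{irreducible case}(ii) force $(\sC_{1})^{S_{\mathfrak{H}}}_{s_{0}}$ to be exactly the image of the Veronese embedding $\P^{2}\hookrightarrow \P^{5}$, an irreducible surface of dimension $2$ and degree $4$. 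On the other hand, since the Higgs field of $\V|_{S_{\mathfrak{H}}}$ is simply the restriction to $S_{\mathfrak{H}}$ of the Higgs field of $\V$ on $S$, Lemma~\ref{key lemma} also yields the set-theoretic identity
\[
(\sC_{1})^{S_{\mathfrak{H}}}_{s_{0}} \quad=\quad (\sC_{1})^{S}_{s_{0}}\cap \P(\sT_{S_{\mathfrak{H}},s_{0}}) \quad\subset\quad \P(\sT_{S,s_{0}})\cong \P^{8},
\]
and by Proposition~\ref{Char1Result} the ambient characteristic variety $(\sC_{1})^{S}_{s_{0}}$ is set-theoretically the union of two irreducible surfaces $\Sigma_{1}, \Sigma_{2}$ of degree $6$, each spanning a common hyperplane $\P^{7}\subset \P^{8}$.

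The contradiction now follows from a short intersection-theoretic count: each $\Sigma_{i}$ spans $\P^{7}$, hence is not contained in any linear subspace of $\P^{8}$ of dimension at most $5$, so a case distinction on whether $\P(\sT_{S_{\mathfrak{H}},s_{0}})$ is contained in $\P^{7}$ or not shows that $\Sigma_{i}\cap \P(\sT_{S_{\mathfrak{H}},s_{0}})$ has dimension at most $1$ in either case (the "proper" intersection dimensions being $0$ and $-1$ respectively, and the improper alternative ruled out by the spanning property). Thus $(\sC_{1})^{S_{\mathfrak{H}}}_{s_{0}}$ has dimension at most $1$, while the Veronese surface has dimension $2$. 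I expect the real labor to lie in the preparatory material (the dimension-counting reduction $\dim\mathfrak{H}=6$, the smoothness reduction for $s_{0}$, and the verification that $dj_{s_{0}}$ is an isomorphism) together with invoking the already performed computer-algebra computation behind Proposition~\ref{Char1Result}; the final intersection estimate itself is entirely elementary once that proposition is granted.
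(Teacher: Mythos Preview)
Your core argument---reduce to $\dim\mathfrak{H}=6$, identify $(\sC_1')_{s_0}$ with the Veronese surface via the assumed factorization, and then contradict this by intersecting the ambient characteristic variety of Proposition~\ref{Char1Result} with a linear $\P^5$---is exactly the paper's argument for the particular good family $f_0:\sX\to\mathfrak{M}_{AR}$. For that family the moduli map is \'etale (Corollary~\ref{etale of moduli spaces}), so the period map is an immersion everywhere by local Torelli, and in particular $dj_{s_0}$ is an isomorphism; your intersection estimate (each $\Sigma_i$ spans $\P^7$, hence meets any $\P^5$ in dimension $\le 1$) is the same as the paper's.

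Where your sketch falls short is the passage to an \emph{arbitrary} good family $f:\sX\to S$. Your phrase ``replacing $S$ by a smooth cover if necessary, arrange that the special point $s_0\in\widetilde{\mathfrak{H}}_{CY}\subset S_{\mathfrak{H}}$'' conflates $S$ with $\mathfrak{M}_{AR}$: the point $s_0$ and the computation of Proposition~\ref{Char1Result} live in $\mathfrak{M}_{AR}$, not in $S$. For a general good family the moduli map $S\to\mathfrak{M}_{CY}$ is only dominant over $\mathfrak{H}$, so the image need not contain the specific moduli point $[s_0]$, and even if it does the map may be ramified there, which destroys both the identification $\sT_{S,s}\cong\sT_{\mathfrak{M}_{AR},s_0}$ and the conclusion that $dj$ is an isomorphism at that point (you only argued ``generic immersion''). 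No finite cover of $S$ repairs this. The paper closes this gap differently: it first runs your argument for $f_0$, deduces from it the maximality statement of Corollary~\ref{Maximality of special MT group} on special Mumford--Tate groups, and then for a general $f$ shows that a factorization over $S_{\mathfrak{H}}$ would force the special Mumford--Tate group of a general point of $\mathfrak{H}$ into $Sp(6,\Q)$, contradicting that corollary. You should either supply this second step or explain carefully why the computation at $s_0$ transfers to every good family.
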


\begin{proof}
We first prove the statement for the good family $f_0$ over
$\mathfrak{M}_{AR}$. Assume that an extension $\mathfrak{H}$ for
$f_0$ as in the theorem does exist. We can assume $\dim
\mathfrak{H}=6$ without loss of generality. Let $\widetilde
{\mathfrak{H}}$ be the inverse image of $\mathfrak{H}$ in
$\mathfrak{M}_{AR}$ and $f_{0}'=f_{0}|_{\widetilde{ \mathfrak{H}}}$
the restriction of $f_0$ to $\widetilde {\mathfrak{H}}\subset
\mathfrak{M}_{AR}$. Let $\phi$ be the period map of $f_0$ and
$\phi'$ that of $f_{0}'$. By assumption one has the factorization
$$
\phi': \widetilde {\mathfrak{H}}\stackrel{j}{\longrightarrow}
\Gamma_0\backslash Sp(6,\R)/U(3)\stackrel{\wedge^3}{\longrightarrow}
\Gamma\backslash Sp(20,\R)/U(1)\times U(9).
$$
By Corollary \ref{etale of moduli spaces} and the local Torelli
theorem for CY manifolds, $\phi$ is \'{e}tale over its image. So is
the restriction $\phi'$. Because $\Gamma_0\backslash Sp(6,\R)/U(3)$
is of six dimensional, $j$ is then \'{e}tale. We derive the
contradiction at the special point $s_0\in \widetilde
{\mathfrak{H}}_{CY}\subset \widetilde {\mathfrak{H}}$. We denote by
$\sC_1$ the first characteristic subvariety of $f_{0}$ and $\sC_1'$
that of $f_{0}'$. By Proposition \ref{irreducible case} (ii), the
fiber $(\sC_1')_{s_0}$ as subvariety of $\P(\sT_{\widetilde
{\mathfrak{H}},s_0})$ is isomorphic to the $\P^2$ into $\P^5$ via
the Veronese embedding. In particular $(\sC_1')_{s_{0}}$ is reduced.
On the other hand, by Lemma \ref{key lemma} we know that
$$
(\sC_1')_{s_{0}}=((\sC_1)_{s_{0}}\cap \P(\sT_{\widetilde
{\mathfrak{H}},s_0}))^{red},
$$
where the scheme-theoretical intersection of the right hand side is
taken in the projective space $\P(\sT_{\mathfrak{M}_{AR},s_0})$. Now
by Proposition \ref{Char1Result}, $(\sC_1)_{s_{0}}$ is two
dimensional and has two irreducible components which are not
contained in any linear projective subspace of dimension $\le 6$.
It follows that
$((\sC_1)_{s_{0}}\cap \P(\sT_{\widetilde
{\mathfrak{H}},s_0}))^{red}$ is of dimension $\leq 1$. A
contradiction. Thus such an extension
$\mathfrak{H}$ for $f_0$ does not exist.\\

As a consequence we get the maximal property of $\mathfrak{H}_{CY}$
stated in Corollary \ref{Maximality of special MT group}. This fact
shows in turn the non-extension property for other good families in
the theorem. Let $f$ be such a good family. As above we can assume
$\dim \mathfrak{H}=6$. We put $S'$ to be inverse image of
$\mathfrak{H}$ under the moduli map $S\to \mathfrak{M}_{CY}$. The
factorization of the period map gives the morphism $j: S'\to
\Gamma_0\backslash Sp(6,\R)/U(3)$ and it induces an isomorphism
$\V|_{S'}\simeq j^*\W$ with $\W$ the canonical $\R$-PVHS over
$\Gamma_0\backslash Sp(6,\R)/U(3)$. Let $\V_{\Q}$ be the $\Q$-PVHS
associated to $f$. The canonical PVHS $\W$ over $\Gamma_0\backslash
Sp(6,\R)/U(3)$ has also a natural $\Q$-structure $\W_{\Q}$ such that
over the points in $\mathfrak{H}_{CY}$ the isomorphism between $\V$
and $j^*\W$ is defined over $\Q$ . Thus we have actually isomorphism
$(\V_{\Q})|_{S'}\simeq j^*\W_{\Q}$. This implies that the special
Mumford-Tate group of a general closed point in $S'$ and hence in
$\mathfrak H$ is contained in $Sp(6,\Q)$. This contradicts with
Corollary \ref{Maximality of special MT group}.
\end{proof}

\noindent {\bf Acknowledgements:} The major part of this work was
done during an academic visit of the second named author to the
Department of Mathematics of the University of Mainz in 2006. He
would like to express his hearty thanks to the hospitality of the
faculty, especially Stefan M\"{u}ller-Stach. We also thank Eckart
Viehweg for his interest and several helpful conversations about
this work.

\end{document}